\newcommand\del[1]{}
\newcommand\think[1]{}
\newcommand\new[1]{}
\newcommand\zus[1]{}
\newcommand\comd[1]{} 
\newcommand\Redd[1]{} 
\def\bdm{\begin{displaymath}}
\def\edm{\end{displaymath}}
\def\bea{\begin{eqnarray}}
\def\eea{\end{eqnarray}}
\newtheorem{theorem}{Theorem}[section]
\newtheorem{lem}[theorem]{Lemma}
\newtheorem{defn}[theorem]{Definition}
\newtheorem{prop}[theorem]{Proposition}
\newtheorem{coro}[theorem]{Corollary}
\newtheorem{remark}{Remark}
\numberwithin{equation}{section}
\begin{document}


\title[FSBE in H\"older space]
{Fractional stochastic Burgers-Type Equation in H\"older space\\-\del{Existence, Uniqueness}wellposedness and approximations-}

\author[Zineb Arab  \& Latifa Debbi]{Z. Arab \& L. Debbi.}

\email{ldebbi@yahoo.fr, zinebarab@yahoo.com}

\address{Department of Mathematics, Faculty of mathematics and computer Sciences, University Batna 2, Batna 05000, Algeria. \& LaMa laboratory, Faculty of Sciences, University Ferhat Abbas, El-Maabouda Setif 19000, Algeria.}
\address{National Polythechnic School, BP 182 El-Harrach 16200.  Algiers, Algeria.\\}

\begin{abstract}
In this work, we use the spectral Galerkin method to prove  the existence of a pathwise unique mild solution of a fractional stochastic partial differential equation of Burgers type in a H\"older space. We get the temporal regularity  and using a combination of Galerkin and exponential-Euler methods, we obtain  a fully discretization scheme of the solution. Moreover, we calculate the rates of convergence for both approximations (Galerkin and fully discretization) with respect to time and to space.

\vspace{0.15cm}

Keywords: {Fractional stochastic Burgers-type Equation type, fractional operator,  H\"older space, fonction spaces, space-time white noise, mild solution, Galerkin approximation, fully discretization, rate of convergence.}

\vspace{0.15cm}

Subject classification [2000]: {58J65, 60H15, 35R11.}

\end{abstract}

\maketitle

\section{Introduction} 
Recently, the field of numerical approximations for stochastic differential equations, in finite and infinite dimensions,\del{ordinary and partial,} attracts more and more attension due, not only to its importance to solve problems but also due to the strange phenomena particullarly emerging in this case. For example in \cite{Hairer-AP-N-SDE}, Hairer et al. constructed a stochastic differential equation for which, nevertheless, the rate 1 is well known for  the Euler approximation for the deterministic version, the Euler approximation for the stochastic version converges to the solution in the strong and in the numerically weak sense without any  arbitrarily small polynomial rate. In \cite{Hairer-Voss}, the authors showed that different finite-difference schemes to stochastic Burgers equation driven by space-time white noise converge to different limiting processes. Divergence of schemes can also occur if the stochastic noise is rougher than the space-time white noise. 
One of the explanation for such strange behaviours is  the loss of regularity of the solutions  of the stochastic differential equations, which arises due to the roughness of the random noise. This loss of regularity even yields for some cases to the illposedness of the equations,  see \cite{Hairer-Matetski-16, Hairer-AP-N-SDE, Hairer-Weber-13, Hairer-Voss}. Our present work makes part of this direction of study and for us the loss of regularity is more complicated as it arises not only from the randomness but also from the structure of the equations themselves. In fact, we consider a class of the fractional version of the nonlinear stochastic  Burgers-type  equations studied in \cite{Hairer-Matetski-16,  Hairer-Weber-13, Hairer-Voss}.

\vspace{0.25cm} 

Before describing our class of equations, let us show some of the delicate properties of the fractional  stochastic partial differential equations, such as the wellposedness and the kind of roughness of the solutions  caused by the fractional operators. In \cite{DebbiDozzi1}, the authors proved that the fractional stochastic heat-type equation admits a unique mild solution if the dissipation index $\alpha>1$ (see Eq. \eqref{FSBE-Evol-1} and Notation below). For $\alpha\in (0, 1)$, we can not get a function but only a distribution solution.  Moreover,  for $\alpha\in (1, 3)$, the trajectories live in  the space-time H\"older continuous functions $ C_t^{\min\{\frac{\alpha-1}{2\alpha}, \frac{\alpha-m}{\alpha}\}-}C_x^{\min\{\frac{\alpha-1}{2}, \alpha-[\alpha]\}-}$, where $ [\alpha]$ is the integer part of $\alpha$ and $m$ is the highest order of the entier derivative ($\alpha<m$).  The classical heat equation corresponds to $\alpha=2$ and $m=0$. These results show that the solutions of the fractional  stochastic partial differential equations suffer of more roughness than their counterpart in the classical case. Another, specific diffuclty for the numerical approximation of the fractional stochastic partial differential equations is due to the fact that the fractional operator is  nonlocal. Hence, to the contrarly to the second order differential operators, to apply the finite-difference method we need more than tree points and all the points of the grid have to be used in every step. This fact yields to 
the slow convergence or to the divergence of the schemes. Moreover, it is not easy to find a concrete form to the discretized fractional operator, see for more discussion and results  \cite{DebbiDozzi2}.

\vspace{0.25cm}

For the nonlinear fractional stochastic partial differential  equations, such as  Burgers equation, the fractional dissipation is not even strong enough to controle the steepening of the nonlinear term. For example, the blow up of solutions of the superctical regimes ($\alpha<1$) of Burgers and nonlocal velocity transport equations 
has been proved in \cite{Kiselev-Nazarov-Fractal-Burgers-08} respectively in \cite{cordoba-cordoba-Fontelos-05}, in \cite{Alibaud-no-unquness-burgers}  the authors proved the non uniqueness of the weak solution
of the supercritical fractional Burgers equation,
the H\"older regularity of the solution of
the critical ($\alpha=1$) 2D-quasi-geostrophic equation has been obtained by Caffarelli and Vasseur \cite{Caffarelli-Vasseur2010} but this problem is still open  for the superctical regime.

\vspace{0.25cm}

In the present work, we  are interested in the class of fractional stochastic Burgers-type equations (FSBTE) given by the evolution form  
\begin{equation}\label{FSBE-Evol-1}
\left\{
\begin{array}{lr}
du(t)=[-A^{\alpha/2} u(t) + F(u(t))]dt+ dW(t),\;\; t\in [0, T],\\
u(0)=u_0,
\end{array}
\right.
\end{equation}
where $ A^{\alpha/2}$ is the fractional power of the Laplacian  $A=-\Delta $ with Dirichlet boundary conditions, $F$ is a nonlinear operator given by $ F(u(t, x)):= \partial_xf(u(t, x))$, $x\in (0, 1)$\del{ $f$ being a function satisfying}  see e.g. \cite{Cardon-Weber-Millet, Duan-Lv-conservation-law, Gyongy-98-B-type, Gyongy-Nualart-Burgers-99} and \cite{Hairer-Matetski-16,  Hairer-Weber-13, Hairer-Voss} for the no gradient case,  $W$ is a Wiener process and  $u_0$ is a $ L^2-$value random variable.  These  equations represent typical examples for locally Lipschitz  nonlinear growth nonlinear equations. The fractional respectively the classical stochastic  Burgers-type  equations  are recupared if  $f(u):=u^2$ respectively if  $\alpha=2$. To the contrary to the fractional and Burgers-type equations, the classical stochastic Burgers equation has been analytically and numerically extensively studied, see for short list, \cite{Alabert-Gyongy, Blomker-Jentzen, DapratoDebusscheTemam94, DapratoZabczyk96, Jentzen-Pertub-Arx, PeszatZabczykBook07, Printems} and the references therein. Fractional Burgers equation has been introduced as a relevant model for anomalous diffusions  such as;  diffusion in complex phenomena, relaxations in viscoelastic meduims, propagation of acoustic waves in gaz-filled tube, see e.g. \cite{SugKak, Sug,  Sug-Frac-cal-89}. The analytic study  for the deterministic fractional Burgers equation has been ivestigated e.g. in \cite{Alibaud-no-unquness-burgers, Caffarelli-Vasseur2010, Kiselev-Nazarov-Fractal-Burgers-08}.  The wellposedness of the  $L^2-$solution and the ergodic properties  of the fractional stochastic Burgers equation have been obtained in \cite{BrzezniakDebbiGoldy, BrzezniakDebbi1}, with the fractional dissipation index: $\alpha>\frac32$, see also \cite{TrumanWu-06}. The numerical study for the fractional deterministic equation is still a modest field due to the difficulties mentioned above (e.g.  difficulties to approximate the fractional operator and to control the nonlinearity). \del{One of the works known for the authors is }In \cite{Woyczynski-Num-approx-fract-05} the authors partially circuvent the first difficulty, by using the Monte Carlo method to approximate numerically the solution of some deterministic fractional partial differential equations, among them the Burgers equations. In \cite{Gugg--Niggemann}, the authors considered one dimensional stochastic hyperdissipation  Burgers equation, with $-A^{\alpha/2}$ being replaced by $(-1)^{p+1}\nu\frac{\partial^{2p}}{\partial x^{2p}}, p\geq 2, \nu>0$ and $W$ being a colored noise.\del{ They first discretized the random noise with respect to time and then used the Galerkin approximation.}  Basing on the  discretazation of the random noise, the authors  constructed an approximation for the solution  of the equation and proved that  it converges to the solution  in $L^m(\Omega;
L^\infty(0, T)\times L^2(0, 1)), \; m\in \mathbb{N}$ and in  $L^1(\Omega; L^2(0, T)\times L^\infty(0, 1))$.

\vspace{0.25cm}

To the best knowldge of the authors, G\"oyngy was the first to introduce stochastic Burgers-type equation  \cite{Gyongy-98-B-type} than several works followed, see e.g.  \cite{Cardon-Weber-Millet, Gyongy-Nualart-Burgers-99,  Xie-frac-Burg}. 
Other kind of generalization has been investigated  by the second author in \cite{Debbi-scalar-active}. In this work, the author introduced a new class of multi-dimensional stochastic active scalar equations covering among others the multidimensional quasi-geostrophic, $2D$-Navier-Stokes and the fractional stochastic Burgers equation on the torus with $\alpha\in (1, 2]$.\del{ , for the sub-super and critical regimes. The methods are also valid for the  fractional stochastic Burgers equation on bounded domain. In particular,} The author established  thresholds guaranting, according to the kind of solutions; strong, weak, martingale and to the Sobolev and the integrability regularities requered, the wellposedness of these equations.  A generalization of this class of equations on $D\subseteq \mathbb{R}^d$ and their wellposedness have also been  studied  in \cite{Debbi-scalar-active-R-d}. \del{ and for the fractional stochastic Burgers equation e.g.  in \cite{BrzezniakDebbiGoldy, BrzezniakDebbi1, } and as a special case for the results obtained in \cite{ Debbi-scalar-active, Debbi-scalar-active-R-d}. In \cite{Xie-frac-Burg} the authors studied the FSBE driven by L'evy noise.}

\vspace{0.25cm}

The global picture of the numeical study for  stochastic partial differential equations, is to elaborate schemes
providing approximations with respect to time,  to  space or to both simultanoeusly, to prove convergence of these schemes and to establish the rate of convergence. The classical results state that  space or time discretization schemes convergence in expectation in the case the coefficients are globaly Lipschitz and/or have linear growth property. In the cases the coefficients are only locally Lipschitz or have nolinear growth, then only weak convergence has been proved, see e.g. \cite{Gyongy-Nualart, Printems, Gyongy-Krylov-10, Gyongy-Millet-09, Gyongy-Millet-07,  Kruse-Optimal-14} for time discretization and see e.g. 
\cite{Gyongy-Krylov-10, Gyongy-Millet-09, Gyongy-Millet-07, Gyongy-Nualart, Kruse-Optimal-14} for space and fully discretizations. 
One of the first results about the pathwise convergence for the stochastic Burgers equation known  to the authors is the work  \cite{Alabert-Gyongy}. In this work the authors used  the finite difference method and proved that the  discretized trajectories \del{pathwise }converge almost surely to the solution in $C_tL^2-$topology with rate $\gamma<1/2$.  

\vspace{0.25cm}

Recently,  a new tendency for the numerical study of stochastic differential equations has been developed based on the idea to elaborate numerical approximations for an abstract stochastic differential equation with coefficients satisfying some conditions and than to show that this study covers some specific equations.  For example,  in \cite{Jentzen-Pertub-Arx}, the authors developed a perturbation theory for finite and infinite stochastic differential equations where the main idea is estimate the error between the solution of the stochastic differential equation and an Ito stochastic process. This last process is considered as a perturbation of the solution. As Galerkin approximation can be regarded as such perturbation, the authors applied this method on classical stochastic Burgers equation with colored noise. In particular, they established the rate of the time uniform convergence with respect to the  $L^m(\Omega; L^2(0, 1))-$norm
with a given $m\in \mathbb{R}$. In \cite{Blomker-Jentzen}, the authors used Galerkin approximation to prove the  wellposedness of an abstract evolution stochastic differential equation and calculated the rate of convergence to the solution in abstract spaces. The authors applied this abstract theory on; the multi dimentional heat equation, reaction diffusion equation and on classical Burgers equation deriven by additive space-time white noise.  In particular, they proved that the rate of the pathwise  convergence of the Galerkin approximation is of order $\gamma<1/2$ in the $C_tC_x-$topology. This result gives an improvement to  Alabert-Gy\"ongy's estimates cited above, see \cite{Alabert-Gyongy}.

\vspace{0.25cm}

Our aim in this work is to prove the wellposedness  of space-time H\"older solutions of  fractional stochastic Burgers-type equations;  Eq.\eqref{FSBE-Evol-1} and to establish the rate of convergence of the Galerkin approximation and of the fully discretization schemes with respect to space and to time. To the best knowledge of the authors, the current work is the first proving  these results not only for the fractional stochastic Burgers-type equations but also for the fractional and classical stochastic Burgers equations. In \cite{Blomker-Jentzen} the wellposedness  and the Galerkin approximation have been obtained for the stochastic classical Burgers-type equations in the space of continuous functions $ C^0.$ The exponential Euler scheme method has been applied here for the first time for the fractional stochastic equations. Recall that this method has been introduced in \cite{Jentzen-Kloeden} and used in the approximation of  the stochastic heat and reaction diffusion equations,  see \cite{Jentzen-Kloeden-Winkel, Jentzen-Kloeden}. To  elaborate the fully disretization scheme, we combined the spectral Galerkin method and a version of the  exponential Euler scheme. Furthermore, we have established conditions to prove the existence of the Galerkin approximation and estimates for the products in specific Sobolev and H\"older spaces. In fact, as it is discussed in \cite{Debbi-scalar-active, Debbi-scalar-active-R-d,   Debbi-FNS}, the Sobolev spaces used in the study of  fractional stochastic partial differential equations are  large, e.g. of  index $\alpha/2, 1-\alpha/2 <1$, for $\alpha<2$. Thus, these fractional Sobolev spaces  are not algebras with respect to the product even for dimension 1 and the solutions are not smooth.

\del{\vspace{1cm}
The main difficulties to apply the techniques in \cite{Blomker-Jentzen} for the fractional stochastic partial differential equations are related to choice of the relevant spaces; fractional Sobolev, H\"older spaces and to the getting the required estimates.

 of them are not classical,  In this work, we chose the spaces $V, U$ in Theorem \ref{Theorem-Blomker-Jentzen} below to be  $V:= C^{\delta}(0,1)$, with a specific $\delta$, $ U:=H^{1-\alpha/2}_2(0, 1)$.

prove the existence and the uniquiness of a mild solution in the space H\"older space.

Let us first of all mention that the works above do not cover the fractional Burgers-type equation \eqref{FSBE-Evol-1}. Some of the difficulties to apply the above works for Eq.\eqref{FSBE-Evol-1},

Let us recall that the technique of using Galerkin method to prove the existence of the solutions of some specific equations is a classical method and it has been widely used either for deterministic or  stochastic partial differential equations, see e.g. \cite{DaPrato-Debussche-Cahn-96, Flandoli-Gatarek-95, Temam-NS-Main-79}.
}
\vspace{0.25cm}

We present our results in the following plane. In  Section \ref{sec-formulation}, we introduce the ingredients of our main problem and we study some of their properties and assumptions. Our main results are given in Section \ref{sec-Def-Results}. Section \ref{sec-Est-Stoch-Terms} is devoted to the study of the Ornstein-Uhlenbeck stochastic process defined via the fractional semi group. We study the wellposedenss and the properties of the solutions of pathwise deterministic fractional equations of Burgers type in Section \ref{sec-deter-Burgers-Eq}. The proofs of our results are given in Section \ref{sec-Proofs}. In Appendix \ref{sec-Term-Basic-results}, we define the functional spaces and we present the results  we are using in our proofs.\del{, for more general definitions and properties, see e.g.  \cite{Adams-Hedberg-99, Runst-Sickel, Triebel-83}. In  Section \ref{sec-}} Regarding the importance of the results in \cite{Blomker-Jentzen}, we end this section by recalling below \cite[Theorem 3.1.]{Blomker-Jentzen}.
\begin{theorem}\label{Theorem-Blomker-Jentzen}\cite[Theorem 3.1.]{Blomker-Jentzen}
 Let $T$ be fixed, $ V, U$ be two $ \mathbb{R}-$Banach spaces and let $ P_N: V\rightarrow V$ be a sequence of linear bounded operators.

Assume that the following assumptions are fulfilled:\del{{\color{red} Please write the assumptions}, }
\begin{itemize}
\item \textbf{Assumption 1.} Let $ S: (0, T] \rightarrow \mathcal{L}(U,V)$ be a continuous map satisfying 
\begin{equation}\label{Eq-Assum-1-1}
\sup_{t\in (0, T]}\big( t^\alpha|S(t)|_{\mathcal{L}(U, V)}\big)<\infty, 
\end{equation}
\begin{equation}\label{Eq-Assum-1-2}
\sup_{N\in \mathbb{N}}\sup_{t\in (0, T]}\big(N^\gamma t^\alpha|S(t)-P_NS(t)|_{\mathcal{L}(U, V)}\big)<\infty, 
\end{equation}

where $ \alpha \in [0, 1)$ and $ \gamma \in (0, \infty)$ are given constants.
\item \textbf{Assumption 2.} Let $ F: V \rightarrow U$ be a maping which  satisfies 
\begin{equation}\label{Eq-Assum-2}
\sup_{|u|_{V}, |v|_{V}\leq r\\ u\neq v}\frac{|F(u)-F(v)|_{U}}{|u-v|_{V}}<\infty. 
\end{equation}

\item \textbf{Assumption 3.} Let $ O: [0, T]\times \Omega \rightarrow V$ be a stochastic process with continuous simple paths and  
\begin{equation}\label{Eq-Assum-3}
\sup_{N\in \mathbb{N}}\sup_{t\in (0, T]}\big(N^\gamma \del{t^\alpha}|(1-P_N)O_t(\omega)|_{V}\big)<\infty, \;\; \text{for every}\; \omega, 
\end{equation}
where  $ \gamma \in (0, \infty)$ are given in Assumption 1.

\item \textbf{Assumption 4.} Let $ X^N: [0, T]\times \Omega \rightarrow V, N\in \mathbb{N}$ be a sequence of stochastic processes with continuous simple paths and  with 
\begin{equation}\label{Eq-Assum-4-1}
\sup_{N\in \mathbb{N}}\sup_{t\in (0, T]}\big(|X^N_t(\omega)|_{V}\big)<\infty, 
\end{equation}

\begin{equation}\label{Eq-Assum-4}
X_t^N(\omega)= \int_0^t P_NS(t-s)F(X_s^N(\omega))ds + P_N(O_t(\omega)),
\end{equation}
for every  $ \omega \in \Omega, \;\; t \in [0, T]$ and every $N\in \mathbb{N}$.
\end{itemize} 
Then there exists a unique stochastic process $ X: [0, T]\times \Omega \rightarrow V,$ with continuous simple paths such that  

\begin{equation}
X_t(\omega)= \int_0^t S(t-s)F(X_s(\omega))ds + O_t(\omega),
\end{equation}
for every  $ \omega \in \Omega, \;\; t \in [0, T]$.  Moreover, there exists a $\mathcal{F}/\mathcal{B}(0, \infty)-$measurable mapping 
$ C: \Omega \rightarrow [0, \infty),$ such that 
\begin{equation}
\sup_{t\in (0, T]}|X_t(\omega)-X^N_t(\omega)|_{V}\leq C(\omega). N^{-\gamma}, 
\end{equation}
holds for every $N\in \mathbb{N}$ and every  $ \omega \in \Omega$, where $ \gamma$  is given in Assumption 1.

\end{theorem}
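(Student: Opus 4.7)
The strategy is a Picard-type argument combined with a singular Gronwall inequality, using the four assumptions modularly. I would first establish uniqueness for the limit equation, then extract $X$ as the limit of the Cauchy sequence $(X^N)$, and finally read off the rate from the same estimate that gave the Cauchy property.

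\textbf{Step 1 (Uniqueness).} Suppose $X^{(1)}, X^{(2)}$ are two $V$-valued continuous solutions of $X_t = \int_0^t S(t-s)F(X_s)\,ds + O_t$ on $[0,T]$. Set $r := \max_{i=1,2}\sup_{t\in[0,T]}|X^{(i)}_t|_V < \infty$ and let $L_r$ be the Lipschitz constant from Assumption 2 on the ball of radius $r$. Using \eqref{Eq-Assum-1-1},
$$|X^{(1)}_t - X^{(2)}_t|_V \leq C L_r \int_0^t (t-s)^{-\alpha}\,|X^{(1)}_s - X^{(2)}_s|_V\,ds.$$
Since $\alpha \in [0,1)$, the kernel is integrable, and a singular (Henry-type) Gronwall lemma forces $X^{(1)} \equiv X^{(2)}$.

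\textbf{Step 2 (Cauchyness of $X^N$ and construction of $X$).} Set $R := \sup_{N,t}|X^N_t(\omega)|_V$ (finite by \eqref{Eq-Assum-4-1}) and let $L_R$ be the Lipschitz constant from Assumption 2 on the ball of radius $R$. For $M \geq N$, write
$$X^N_t - X^M_t = (P_N - P_M)O_t + \int_0^t\bigl[P_N S(t-s) - P_M S(t-s)\bigr]F(X^N_s)\,ds + \int_0^t P_M S(t-s)\bigl[F(X^N_s) - F(X^M_s)\bigr]ds.$$
The first term is bounded by $2C(\omega) N^{-\gamma}$ via \eqref{Eq-Assum-3}. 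For the second, \eqref{Eq-Assum-1-2} gives $|(P_N - P_M)S(t-s)|_{\mathcal{L}(U,V)} \leq 2C N^{-\gamma}(t-s)^{-\alpha}$, and $|F(X^N_s)|_U$ is uniformly bounded thanks to \eqref{Eq-Assum-2} applied with the base point $0$ (or $F(0)$) and the uniform bound \eqref{Eq-Assum-4-1}; integrability of $(t-s)^{-\alpha}$ yields $C'(\omega) N^{-\gamma}$. For the third, one uses $|P_M S(t-s)|_{\mathcal{L}(U,V)} \leq |S(t-s)|_{\mathcal{L}(U,V)} + |(1-P_M)S(t-s)|_{\mathcal{L}(U,V)} \leq C''(t-s)^{-\alpha}$ together with the local Lipschitz bound $|F(X^N_s) - F(X^M_s)|_U \leq L_R|X^N_s - X^M_s|_V$. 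Singular Gronwall applied to $e_{N,M}(t) := |X^N_t - X^M_t|_V$ then delivers
$$\sup_{t\in[0,T]} e_{N,M}(t) \leq \widetilde C(\omega)\, N^{-\gamma}.$$
Hence $(X^N(\omega))$ is Cauchy in $C([0,T];V)$ for every $\omega$; let $X(\omega)$ be its limit. Passing to the limit in \eqref{Eq-Assum-4} is routine: $(1-P_N)S(t-s)F(X^N_s) \to 0$ by \eqref{Eq-Assum-1-2} and dominated convergence (domination by the integrable $(t-s)^{-\alpha}$), continuity of $F$ on bounded sets (Assumption 2) transfers $\int_0^t S(t-s) F(X^N_s)\,ds \to \int_0^t S(t-s) F(X_s)\,ds$, and $P_N O_t \to O_t$ by \eqref{Eq-Assum-3}.

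\textbf{Step 3 (Rate).} With $X$ in hand, the same decomposition, now with $X$ in place of $X^M$, gives
$$X_t - X^N_t = (1-P_N)O_t + \int_0^t (1-P_N)S(t-s)F(X_s)\,ds + \int_0^t P_N S(t-s)[F(X_s) - F(X^N_s)]\,ds,$$
the first two terms being $\leq C(\omega) N^{-\gamma}$ by the arguments above, and the third absorbed by a final singular Gronwall step. Measurability of the resulting $C(\omega)$ follows because it is built from countable suprema of $\omega$-measurable quantities (using continuity of the paths).

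\textbf{Main obstacle.} The delicate point is the singular Gronwall step when $\alpha$ is close to $1$: naive iteration produces divergent geometric series, so one must either invoke the iterated-kernel Beta-function identity (Henry's lemma) to show the implicit constant depends only on $T, \alpha, L_R$, or work with $L^p$-in-time norms for some $p > 1/(1-\alpha)$ and convert back by Hölder. A secondary, but entirely mechanical, difficulty is the verification that the Lipschitz-in-$r$ constant $L_r$ in Assumption 2 can be chosen uniformly for $r \leq R$, which is immediate from the local Lipschitz formulation \eqref{Eq-Assum-2}.
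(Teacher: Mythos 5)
Your proposal is correct, but note that the paper does not prove this statement at all: Theorem \ref{Theorem-Blomker-Jentzen} is quoted verbatim from \cite[Theorem 3.1]{Blomker-Jentzen} and used as a black box. Your reconstruction (telescoping decomposition of $X^N_t-X^M_t$ into a projection error on $O$, a projection error on the convolution, and a Lipschitz term, followed by a singular Gronwall/Henry inequality, with the rate read off from the same estimate) is essentially the argument of the cited reference, and your handling of the two delicate points --- the integrable singularity $(t-s)^{-\alpha}$ with $\alpha\in[0,1)$ and the measurability of $C(\omega)$ via countable suprema over continuous paths --- is sound.
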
   

{\bf Notations.} Let $ \mathbb{N}_0:=\mathbb{N}-\lbrace 0 \rbrace $. For $ 1 < p < \infty $, we say that $ q $ is the conjugate of $ p $ if $ \frac{1}{p}+\frac{1}{q}=1 $ and for $ p=1 $ (resp. $ p=\infty $) $ q=\infty $ (resp. $ q=1 $). 
By a domain $ D $ we mean a non empty open set. Here, we briefly give the notations of the functional spaces defined on $ D $ used in this paper, the complete definitions and some of the properties will be presented in Appendix \ref{sec-Term-Basic-results}. For $ 1 \leq p \leq \infty $ we denote the Lebesgue space by $L^p(D)$, the Sobolev space respectively the fractional Sobolev space (also called Aronszajn, Gagliardo or Slobodockij space) are denoted by $ W^{m}_{p}(D) $ for $ m \in \mathbb{N} $ and by $ W^{s}_{p}(D) $ for $ 0 < s \neq \text{integer} $. In this work we are interested in the special case $ p=2 $ where $  W^{s}_{2}(D) $ is Hilbert space, for this the shorter notation $  H^{s}_{2} $ will be used. Finally, the notation $ B_{pq}^{s}(D) $, $ s \in \mathbb{R} $ and $ 0 < p,q \leq \infty $ is reserved for Besov space.  $C(D)$ is the space of bounded continous functions, the space of H\"older continous functions of exponent $ \delta \in (0,1) $ is denoted by $ C^\delta(D)$ and the notation $ \mathcal{C}^\delta(D)$ is reserved for Zygmund space. Let $ X $ and $ Y $ be two Banach spaces, we use the notation $ \vert . \vert_{X} $ to indicate the norm in $ X $ and we denote by $ \mathcal{L}(X,Y)$ the space of linear bounded operators defined on $ X $ into $ Y $ endowed with the norm $ \Vert  .\Vert_{\mathcal{L}(X,Y)} $.  The abbreviations (FSBE) and (FSBTE) are used respectively for fractional stochastic Burgers equation and the fractional stochastic Burgers type equation. In the end, let us mention that, the values of the constants may change from line to line and we sometimes delete their dependence on parameters.

\del{{\bf Notation.}  Here, we briefly, give  the notation used in this paper. The complete definitions and some of the properties will be presented in Appendix \ref{sec-Term-Basic-results}.  $ \mathbb{N}_0:=\mathbb{N}-\lbrace 0 \rbrace $. 
By a domain $ D \subset \mathbb{R}^d, \;\; d\in \mathbb{N}_0$,  we mean a non empty connected open set. We assume that the boundary of $D$ are at least Lipschitz.  For $ 1 \leq p < \infty$,  we denote by  $L^p(D)$ the Lebesgue space and by $ W^{s}_{p}(D)$, with $ s \in \mathbb{R}$, the Sobolev space. In particular, for $ s \neq \text{integer}$,  the fractional Sobolev space $ W^{s}_{p}(D)$ is also called Aronszajn, Gagliardo or Slobodockij space. $L^{\infty}(D)$ is the space of bounded functions endowed by the essential supremum norm. 
 The notation $ B_{pq}^{s}(D)$,\del{, respectively $ B_{pq}^{s}(D) $,}  with $ s \in \mathbb{R}$ and $ 0 < p,q \leq \infty $ is reserved for Besov space. In this work we are interested in the special case $ p=2 $ where $  W^{s}_{2}(D) $ is a Hilbert space. For this the shorter notation $  H^{s}_{2} $ will be used.  We say that  $1 < p , q < \infty $ are conjugate if $ \frac{1}{p}+\frac{1}{q}=1$ and for $ p=1 $ (resp. $ p=\infty $) $ q=\infty $ (resp. $ q=1 $).  $C(\overline{D})$ is the space of continous functions in $\overline{D}$ endwed with the norm . The space of H\"older continous functions of exponent $ \delta \in (0,1) $ is denoted by $ C^\delta(D)$ and the notation $ \mathcal{C}^\delta(D)$ is reserved for Zygmund space. 
 
Let $ X $ and $ Y $ be two Banach spaces, we use the notation $ \vert . \vert_{X} $ to indicate the norm in $ X $ and we denote by $ \mathcal{L}(X,Y) $ the space of linear bounded operators defined on $ X $ into $ Y $ endowed with the norm $ \Vert  .\Vert_{\mathcal{L}(X,Y)} $.  The abbreviations (FSBE) and (FSBTE) are used respectively for fractional stochastic Burgers equation and the fractional stochastic Burgers type equation. In the end of this introduction, let us mention that, the values of the constants may change from line to line and we sometimes delete their dependence on parameters.
  We denote by $L^p(0,1)$ the Lebesgue space on $(0, 1)$, $ C^{\delta}(0,1)$ the space of H\"older continuos functions of index $\delta\in [0, 1)$ defined on $[0, 1]$, in particular,  $C^0$ is the space of continuous and bounded functions,} 

\section{Formulation of the problem}\label{sec-formulation}

\subsection{Definition and properties of the linear drift term.} As mentioned above, we denote $-\Delta$\del{the Laplacian} with Dirichlet conditions boundary by $A$. We denote the part of $A$ on $L^q(0, 1)$ by $A_q$, but exceptionally for $q=2$ and later on, we will write only $A$  and we will omit the subscript $q$.  We start by recalling the following classical results:
\begin{theorem}\label{Prop-1-Laplace} \cite{Taylor-PDE-III}\del{ \cite[p 28-L-p Spectral theory]{Taylor-PDE-III}}
\noindent The operator $A_q$ is densely defined, has bounded inverse ($0$ is in the resolvent)  and the corresponding semi group $ (e^{-A_q t})_{t\geq 0}$ is analytic on $L^q(0,1)$, $q\geq 2$.
\end{theorem}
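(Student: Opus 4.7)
The plan is to verify the three conclusions in turn, exploiting the self-adjoint structure of $A$ on $L^2(0,1)$ and then transporting the information to general $q$ by kernel estimates. First, standard elliptic theory identifies the maximal domain as $D(A_q)=W^{2,q}(0,1)\cap W^{1,q}_0(0,1)$, which contains $C_c^\infty(0,1)$; since the latter is dense in $L^q(0,1)$ for every finite $q$, density of the domain is immediate.

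Second, for invertibility and the location of $0$ in the resolvent set, I would write down the Green's function of $-u''$ with homogeneous Dirichlet data on $(0,1)$, namely $G(x,y)=\min(x,y)(1-\max(x,y))$, and set $A_q^{-1}f(x)=\int_0^1 G(x,y)f(y)\,dy$. The boundedness and continuity of $G$ on the unit square immediately give $A_q^{-1}\in\mathcal{L}(L^q(0,1))$, while classical elliptic regularity upgrades this to $A_q^{-1}\colon L^q\to W^{2,q}\cap W^{1,q}_0$ continuously. Equivalently, the spectrum starts at the first Dirichlet eigenvalue $\lambda_1=\pi^2>0$, so $0$ belongs to the resolvent set with a quantitative bound.

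Third, and this is the analytic heart of the statement, I would establish analyticity of $(e^{-A_q t})_{t\ge 0}$ on $L^q(0,1)$ for $q\ge 2$. On $L^2$, $A$ is positive self-adjoint with orthonormal eigenbasis $\{\sqrt{2}\sin(n\pi\cdot)\}_{n\ge 1}$ and eigenvalues $n^2\pi^2$, so the spectral theorem directly furnishes $(e^{-At})$ as a contractive analytic semigroup on the right half-plane, together with the resolvent estimate $\|(\lambda I+A)^{-1}\|_{\mathcal{L}(L^2)}\le C/|\lambda|$ valid on any subsector $|\arg\lambda|<\pi/2+\theta$. To pass to $L^q$ with $q>2$, I would invoke the Gaussian upper bound for the Dirichlet heat kernel on $(0,1)$, obtained by reflection/comparison with the free heat kernel on $\mathbb{R}$; this bound immediately gives contractivity of $e^{-At}$ on $L^\infty$. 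Stein's interpolation theorem, applied to the complex family $\{e^{-zA}\}$ on the strip between $L^2$ (where analyticity is known) and $L^\infty$ (where boundedness is known), then delivers an analytic extension of $(e^{-A_q t})$ to a sector in every $L^q$ with $q\in[2,\infty)$.

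The main obstacle is this last step: although each ingredient is classical, care is needed to verify that Stein interpolation genuinely propagates the sector of analyticity, and not merely the strong-continuous boundedness. A clean alternative, which I would fall back on if the interpolation bookkeeping becomes cumbersome, is Ouhabaz's criterion: the symmetric Dirichlet form $\int_0^1 u'v'\,dx$ associated with $A$ satisfies the Beurling--Deny conditions, and together with the $L^\infty$-contractivity shown above this yields in one stroke that $(e^{-A_q t})_{t\ge 0}$ is an analytic semigroup of angle $\pi/2$ on every $L^q(0,1)$ with $q\in[2,\infty)$.
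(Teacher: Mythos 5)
The paper offers no proof of this statement at all: it is quoted verbatim as a classical fact with a citation to Taylor's \emph{Partial Differential Equations III}, so there is no internal argument to compare yours against. Judged on its own merits, your sketch is correct and follows the standard route one would find in the cited literature. The density and invertibility steps are unimpeachable: $C_c^\infty(0,1)\subset W^{2,q}\cap W^{1,q}_0=D(A_q)$ gives density, and the explicit Green's function $G(x,y)=\min(x,y)(1-\max(x,y))$, bounded on the unit square, gives $A_q^{-1}\in\mathcal{L}(L^q)$ (with the spectral gap $\lambda_1=\pi^2$ confirming $0\in\rho(A_q)$). For analyticity you correctly isolate the only delicate point, namely that Stein interpolation of the analytic family $\{e^{-zA}\}$ between the $L^2$ theory (spectral theorem) and $L^\infty$-contractivity (Gaussian/reflection bound on the Dirichlet heat kernel) must be checked to propagate a genuine sector of analyticity and not just boundedness; this is the classical argument of Stein--Davies and it does work, at the cost of a $q$-dependent shrinking of the angle. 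Your fallback via the form method (Ouhabaz's criterion for submarkovian semigroups) is equally valid and recovers the full angle $\pi/2$. A third equally clean variant, worth knowing, is to verify the characterization $\sup_{t>0}\|tA\,e^{-tA}\|_{\mathcal{L}(L^q)}<\infty$ by interpolating the single operator $tAe^{-tA}$ between $L^2$ (where the spectral theorem bounds it by a constant) and $L^\infty$ (where it is controlled by the kernel bound on $\partial_t p_t$); this avoids complex-parameter interpolation entirely. In short: no gap, and your approach is the standard one underlying the cited reference.
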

\noindent Consequently, as $ A_q$ is  the infinitesimal generator of analytic semigroup, then we can define the fractional power of $ A_q^\beta, \beta \in \mathbb{R}$,   see e.g. \cite[Definition 6.7]{Pazy-83} and \cite[Chap. IX]{Yosida} 
\begin{defn}
For all $ \beta >0$, we define $ A_q^{\beta}$, the fractional power of the operator $ A_q$, as the inverse of
\begin{equation}\label{Eq-def-A-alpha}
 A_q^{-\beta}:= \frac{1}{\Gamma(\beta)} \int_0^\infty z^{\beta-1} e^{-zA_q}dz,
\end{equation}
where the Dunford integral in RHS of \eqref{Eq-def-A-alpha} converges in the uniform operator topology.
\end{defn}

\noindent Furthermore, we  recall, see  e.g. \cite{Debbi-scalar-active} and \cite[ps. 283, 303]{Taylor-PDE-I} that $A: D(A)\rightarrow L^2(0,1)$ is an isomorphism, the inverse $A^{-1}$ is self adjoint and
thanks to the compact embedding of $ D(A)$ in
$L^2(0,1)$, we conclude that  $A^{-1}$ is compact on $L^2(0,1)$.
Hence, there exists an orthonormal basis $( e_j)_{j\in \mathbb{N}}\subset D(A)$ consisting of eigenfunctions of
 $ A^{-1}$ and such that the sequence of eigenvalues
$ (\lambda_j^{-1})_{j\in \mathbb{N}}$ with $ \lambda_j>0 $,  converges to zero. Consequently, $( e_j)_{j\in \mathbb{N}}$ is also a sequence of eigenfunctions of $ A$  corresponding to the eigenvalues $ (\lambda_j)_{j\in \mathbb{N}}$.

\begin{lem}\label{Lem-semigroup}
The operator $A$ is  positive, self adjoint on $L^2(0,1)$ and densely defined. Using the spectral decomposition, we construct  positive and negative fractional powers  $A^{\frac\beta2}, \; \beta \in \mathbb{R} $. In particular, as the spectrum of $ A$ is reduced to the discrete one, we get an elegant representation for  $(A^{\frac\beta2}, D(A^{\frac\beta2}))$. In fact, let  $\beta \geq 0$, then, see e.g. \cite{Flandoli-Schmalfub-99},
\begin{eqnarray}\label{construction-of-fract-bounded}
 D(A^\frac\beta2)&=& \{v\in L^2(0,1), \; s.t.\; |v|^2_{D(A^\frac\beta2)}:=
\sum_{j\in \mathbb{N}} \lambda_j^{\beta} \langle v, e_j \rangle^2<\infty\}, \nonumber\\
A^\frac\beta2 v &=& \sum_{j\in \mathbb{N}} \lambda_j^{\frac\beta2}\langle v, e_j \rangle e_j,\;  
\forall v\in D(A^\frac\beta2),
\end{eqnarray}
\del{\begin{equation}\label{basis}
A^{\frac\alpha2} e_k := \lambda_k^{\frac\alpha2} e_k, \; k\in \mathbb{N}
\end{equation}}
The operator $ A_q^\frac\alpha2$ is the infinitesimal generator of an analytic semi group
$(e^{-A_q^\frac\alpha2 t} )_{t\geq 0}$ on
$ L^q(0,1)$. Moreover, we have for $ \beta \geq 0$,
\begin{equation}\label{eq-semigp-property}
 | A_q^\frac\beta2 e^{-A_q^{\alpha/2} t}|_{\mathcal{L}(L^q)}\leq c t^{-\frac\beta\alpha}
\end{equation}
and 
\begin{equation}\label{semigrp-def}
(e^{- A^{\alpha/2} t}v)(x):= \sum_{k=1}^{\infty} e^{-\lambda_{k}^{\frac{\alpha}{2}}t} \langle  v , e_{k} \rangle e_{k}(x), \; for \; all \;  v \in L^2(0,1) .
\end{equation}
\end{lem}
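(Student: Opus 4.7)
The statement is essentially a bookkeeping lemma assembling classical facts about the Dirichlet Laplacian and its fractional powers, so the plan is to deduce each assertion from the spectral theorem on $L^2(0,1)$ and from the abstract theory of fractional powers of generators of analytic semigroups on $L^q(0,1)$.

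First I would establish the three $L^2$-assertions. Positivity and symmetry of $A=-\Delta$ with Dirichlet conditions follow by integration by parts, giving $\langle Au,v\rangle=\langle \nabla u,\nabla v\rangle$ on $D(A)\subset H^1_0(0,1)$; density of $D(A)$ in $L^2(0,1)$ is classical. Self-adjointness is already implicit in the discussion preceding the lemma, where $A^{-1}$ is identified as a compact self-adjoint operator on $L^2(0,1)$ with orthonormal eigenbasis $(e_j)$ and strictly positive eigenvalues $(\lambda_j^{-1})$. From this, the spectral theorem yields a functional calculus $f\mapsto f(A)$ for Borel functions, and applying it to $f(x)=x^{\beta/2}$ produces the formula \eqref{construction-of-fract-bounded} for $A^{\beta/2}$ and its domain $D(A^{\beta/2})$. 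The representation \eqref{semigrp-def} of the semigroup is then the image under this calculus of the bounded function $x\mapsto e^{-x^{\alpha/2} t}$, whose convergence in $L^2(0,1)$ is immediate from Parseval's identity.

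Next I would handle the $L^q$-statements. Theorem~\ref{Prop-1-Laplace} already gives that $-A_q$ generates a bounded analytic semigroup on $L^q(0,1)$ and that $0\in\rho(A_q)$. The classical Balakrishnan/Komatsu theory (see \cite[Chap.~IX]{Yosida} or \cite[Chap.~2]{Pazy-83}) then implies that for every $\beta>0$ the fractional power $A_q^{\beta/2}$ defined through \eqref{Eq-def-A-alpha} is a closed densely defined operator, that $-A_q^{\alpha/2}$ itself generates a bounded analytic semigroup on $L^q(0,1)$, and that the semigroup composition rule $A_q^{\beta/2}=(A_q^{\alpha/2})^{\beta/\alpha}$ holds. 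This gives the generator assertion in the lemma.

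The bound \eqref{eq-semigp-property} is then a standard smoothing estimate: for the generator $-B:=-A_q^{\alpha/2}$ of a bounded analytic semigroup one has $\|B^{\gamma}e^{-Bt}\|_{\mathcal{L}(L^q)}\le c\,t^{-\gamma}$ for every $\gamma\ge 0$ and $t>0$ (e.g.\ \cite[Theorem 6.13]{Pazy-83}). Choosing $\gamma=\beta/\alpha$ and using $A_q^{\beta/2}=B^{\beta/\alpha}$ gives exactly \eqref{eq-semigp-property}.

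The main obstacle is really just administrative rather than mathematical: one has to verify that the two different constructions of fractional powers—spectral calculus on $L^2(0,1)$ and the Dunford integral \eqref{Eq-def-A-alpha} on $L^q(0,1)$—agree on their common domain (so that formulas proved in the $L^2$ setting can be transferred to $L^q$), and to justify the composition identity $(A_q^{\alpha/2})^{\beta/\alpha}=A_q^{\beta/2}$ in the $L^q$ framework. Both facts are contained in \cite[Chap.~IX]{Yosida} and \cite[Chap.~2]{Pazy-83}, so it suffices to invoke the relevant statements there rather than reprove them.
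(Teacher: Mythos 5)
Your proposal is correct and follows exactly the route the paper itself intends: the paper states this lemma without proof, citing the classical spectral-theoretic and analytic-semigroup references (Flandoli--Schmalfu{\ss}, Pazy, Yosida), and your assembly of the spectral calculus on $L^2$, the Balakrishnan--Komatsu theory on $L^q$, and the smoothing estimate $\Vert B^{\gamma}e^{-Bt}\Vert\leq c\,t^{-\gamma}$ with $\gamma=\beta/\alpha$ is precisely the standard argument those citations supply. Your remark about reconciling the two constructions of fractional powers and justifying $(A_q^{\alpha/2})^{\beta/\alpha}=A_q^{\beta/2}$ is the right point to flag, and deferring it to the cited sources is consistent with the paper's own level of detail.
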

We add also to the list of the properties of the semigroup the  following no classical results:

\begin{lem}\label{Prop-sg-1} 
Let $0<T<\infty$ and $1<\alpha\leq 2$ be fixed and let  $ \delta \in [0, 1) $ and  $ \beta \in \mathbb{R},$ such that $  \delta-\beta < \alpha-\frac{1}{2}$. Then for all  $\eta\in (\frac{1+2\delta-2\beta}{2\alpha}, 1)$, there exists a positive constant  $ C_{\alpha, \delta, \eta, \beta} > 0$ s.t. for all   $ t \in (0,T]$,
\begin{equation}\label{semi-gr-1}
\Vert e^{-A^{\alpha/2} t} \Vert _{\mathcal{L}(H^{\beta}_2, C^{\delta})} \leq C_{\alpha,\delta ,\beta,\eta}t^{-\eta}.
\end{equation}
In particular, for $ \beta>\frac12$ and $ \delta <\beta-\frac12$, there exists a positive constant $ C_{\delta, \beta} > 0$ s.t.
\begin{equation}\label{aide-2-1-Bis}
\Vert e^{-A^{\alpha/2} t} \Vert _{\mathcal{L}(H^{\beta}_2, C^{\delta})} \leq C_{\alpha, \delta, \beta}.
\end{equation}
\end{lem}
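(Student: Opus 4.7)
The plan is to factor the map $e^{-A^{\alpha/2}t}:H^\beta_2\to C^\delta$ through an intermediate Sobolev space $H^s_2$ and combine a one-dimensional Sobolev embedding with the smoothing estimate \eqref{eq-semigp-property}. First I would invoke the Sobolev embedding
\[
H^s_2(0,1)\hookrightarrow C^\delta(0,1)\qquad\text{whenever } s>\delta+\tfrac12,
\]
which is stated in the appendix. Next, for $v\in H^\beta_2$ I would use the spectral definition in \eqref{construction-of-fract-bounded} to write
\[
|e^{-A^{\alpha/2}t}v|_{H^s_2}=|A^{s/2}e^{-A^{\alpha/2}t}v|_{L^2}=\bigl|A^{(s-\beta)/2}e^{-A^{\alpha/2}t}\,A^{\beta/2}v\bigr|_{L^2},
\]
so that the problem reduces to an $L^2$-bound on $A^{(s-\beta)/2}e^{-A^{\alpha/2}t}$ and the identity $|A^{\beta/2}v|_{L^2}=|v|_{H^\beta_2}$. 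The latter is automatic from the spectral representation and makes sense for negative $\beta$ as well.

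Then I would choose $s$ in the open interval $(\max\{\beta,\delta+\tfrac12\},\alpha+\beta)$. This interval is non-empty precisely because of the hypothesis $\delta-\beta<\alpha-\tfrac12$, which is equivalent to $\delta+\tfrac12-\beta<\alpha$. With such an $s$, the exponent $s-\beta$ is non-negative and strictly less than $\alpha$, so the semigroup estimate \eqref{eq-semigp-property} on $L^2$ gives
\[
\bigl|A^{(s-\beta)/2}e^{-A^{\alpha/2}t}w\bigr|_{L^2}\leq C\,t^{-(s-\beta)/\alpha}|w|_{L^2}.
\]
Chaining the three steps yields $|e^{-A^{\alpha/2}t}v|_{C^\delta}\leq C\,t^{-(s-\beta)/\alpha}|v|_{H^\beta_2}$. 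Setting $\eta=(s-\beta)/\alpha$, the admissible values of $s$ translate exactly into $\eta\in\bigl(\tfrac{1+2\delta-2\beta}{2\alpha},1\bigr)$, matching the statement of the first inequality \eqref{semi-gr-1}.

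For the second, sharper inequality \eqref{aide-2-1-Bis}, the hypothesis $\delta<\beta-\tfrac12$ allows the direct Sobolev embedding $H^\beta_2\hookrightarrow C^\delta$, so no smoothing is needed. I would only have to observe that $e^{-A^{\alpha/2}t}$ is a contraction on $H^\beta_2$ uniformly in $t\in(0,T]$: this follows from the spectral representation \eqref{semigrp-def}, since $A^{\beta/2}$ commutes with $e^{-A^{\alpha/2}t}$ and the semigroup is a contraction on $L^2$. Combining these two observations gives the uniform bound.

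I do not expect a truly hard step: the calculation is essentially bookkeeping once the right factorisation $A^{s/2}\circ e^{-A^{\alpha/2}t}\circ A^{-\beta/2}$ is adopted. The main delicate point is to verify that the range of $s$ produces exactly the interval claimed for $\eta$, and to allow negative $\beta$ by consistently using the spectral definitions of $A^{\beta/2}$ and of the $H^\beta_2$-norm rather than any distributional extension argument.
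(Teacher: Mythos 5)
Your proof is correct, but it takes a different route from the paper's. The paper works directly with the eigenfunction expansion: it bounds $|e^{-A^{\alpha/2}t}v|_{C^\delta}\le\sum_k e^{-\lambda_k^{\alpha/2}t}|\langle v,e_k\rangle|\,|e_k|_{C^\delta}$, inserts the elementary bounds $|e_k|_{C^\delta}\le c_\delta k^\delta$ (Lemma \ref{lem-e-k-holder}) and $e^{-x}\le C_\gamma x^{-\gamma}$ (Lemma \ref{lem-elementary-1}), and closes with Cauchy--Schwarz, the condition $\eta>\frac{1+2\delta-2\beta}{2\alpha}$ appearing as the convergence threshold of the resulting series $\sum_k k^{2(-\eta\alpha+\delta-\beta)}$; the bounded case \eqref{aide-2-1-Bis} is obtained by replacing the exponential by $1$. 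You instead factor the operator as $H^\beta_2\to H^s_2\hookrightarrow C^\delta$, using the analytic-smoothing estimate \eqref{eq-semigp-property} on $L^2$ together with the embedding $H^s_2\hookrightarrow C^\delta$ for $s>\delta+\tfrac12$ (which follows from Lemma \ref{lem-ess-general}/Theorem \ref{main-sob-embedding}), and your bookkeeping of the admissible $s\in(\max\{\beta,\delta+\tfrac12\},\alpha+\beta)$ does reproduce exactly the stated range of $\eta$ (for the relevant case where the lower endpoint is nonnegative; for $\eta\le 0$ neither your argument nor the paper's applies, and the intended reading is clearly $\eta>0$). Your route is more modular and generalizes immediately to other target spaces once an embedding is available, at the price of invoking the embedding machinery; the paper's computation is more self-contained and, as a by-product, sets up the identical calculation reused in Lemmas \ref{Prop-sg-2} and \ref{lem-sg-regul}. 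Your treatment of the second inequality (direct embedding $H^\beta_2\hookrightarrow C^\delta$ plus the contraction property of the semigroup on $H^\beta_2$) is equivalent to the paper's use of $e^{-\lambda_k^{\alpha/2}t}\le1$.
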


\begin{proof}

Let $ \delta \in [0, 1) $ and  $ \beta \in \mathbb{R},$ satisfying $ \delta - \beta < \alpha-\frac{1}{2}$ and let $ v \in H^{\beta}$. Then
it is easy to see that 
\begin{eqnarray}\label{Proof-Est-1-sg-C-delta-Sobolev}
|e^{-A^{\alpha/2} t}v|_{C^\delta}&=& |\sum_{k=1}^\infty e^{-\lambda_k^{\frac{\alpha}{2}}t}\langle v, e_k\rangle e_k|_{C^\delta}\leq \sum_{k=1}^\infty e^{-\lambda_k^{\frac{\alpha}{2}}t}|\langle v, e_k\rangle| |e_k|_{C^\delta}.
\end{eqnarray}
Using lemmas \ref{lem-e-k-holder} and \ref{lem-elementary-1} and  H\"older inequality, we get
\begin{eqnarray}\label{Proof-Est-2-sg-C-delta-Sobolev}
|e^{-A^{\alpha/2} t}v|_{C^\delta}&\leq & c_{\alpha, \delta, \eta} \sum_{k=1}^\infty  t^{-\eta} (k\pi)^{-\eta\alpha}|\langle v, e_k\rangle| k^\delta \leq  C_{\alpha, \delta, \eta} t^{-\eta}\sum_{k=1}^\infty   k^{-\eta\alpha+\delta-\beta}|\langle v, e_k\rangle| (k\pi)^{\beta}\nonumber\\
&\leq &  c_{\alpha, \delta, \eta, \beta} t^{-\eta}\big(\sum_{k=1}^\infty   k^{2(-\eta\alpha+\delta-\beta)}\big)^\frac12  \big(\sum_{k=1}^\infty  (\langle v, e_k\rangle)^2 (k\pi)^{2\beta}\big)^\frac12 \nonumber\\
&\leq &  c_{\alpha, \delta, \eta, \beta} t^{-\eta} |v|_{H^{\beta}_2},
\end{eqnarray}
provided $ 2(-\eta\alpha+\delta-\beta) <-1.$  This last is equivalent to  $ \eta> \frac{1+2\delta-2\beta}{2\alpha}$. A sufficient condition for  $ \eta $ to be in $ [0, 1)$ is that $ \delta - \beta < \alpha-\frac{1}{2}$. The proof of the Est. \eqref{semi-gr-1} is then completed.

Now if $ \beta>\frac12$,  we use the estimate; $ e^{-\lambda_k^\frac\alpha2t}\leq 1$ in Est. \eqref{Proof-Est-1-sg-C-delta-Sobolev} and than we follow the same steps as to get \eqref{Proof-Est-2-sg-C-delta-Sobolev}. The condition $ \delta < \beta-\frac12$ emerges as a consequence for the above calculus. 
\end{proof}
\begin{coro}\label{Coro-1} Let $0<T<\infty$ and $1<\alpha\leq 2$ be fixed and let  $ \delta \in [0, \frac{\alpha-1}{2})$.  Then for all  $\eta\in (\frac{1+2\delta+\alpha}{2\alpha}, 1)$, there exists a positive constant  $ C_{\alpha, \delta, \eta} > 0$ s.t. for all   $ t \in (0,T]$,
\begin{equation}\label{semi-gr-1-partic-beta=alpha-2}
\Vert e^{-A^{\alpha/2} t} \Vert _{\mathcal{L}(H^{-\frac\alpha2}_2, C^{\delta})} \leq C_{\alpha,\delta, \eta}t^{-\eta}.
\end{equation}

\end{coro}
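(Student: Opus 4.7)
The plan is to obtain Corollary \ref{Coro-1} as an immediate specialization of Lemma \ref{Prop-sg-1} with the particular choice $\beta = -\alpha/2$, so no new analytical content is needed beyond checking that this value of $\beta$ is admissible and that the ranges of parameters match.

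First I would verify compatibility of the hypotheses. Lemma \ref{Prop-sg-1} requires $\delta \in [0,1)$ and $\delta - \beta < \alpha - \frac{1}{2}$. Since $1 < \alpha \leq 2$, any $\delta \in [0, \frac{\alpha-1}{2})$ automatically lies in $[0,1)$, and substituting $\beta = -\alpha/2$ transforms the second condition into
\begin{equation*}
\delta + \frac{\alpha}{2} < \alpha - \frac{1}{2} \quad \Longleftrightarrow \quad \delta < \frac{\alpha-1}{2},
\end{equation*}
which is exactly the standing assumption on $\delta$. Thus the hypotheses of Lemma \ref{Prop-sg-1} are fulfilled with this $\beta$.

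Next I would check that the admissible range for $\eta$ matches. The lower bound $\eta > \frac{1+2\delta-2\beta}{2\alpha}$ in Lemma \ref{Prop-sg-1} reads, upon inserting $\beta = -\alpha/2$,
\begin{equation*}
\eta > \frac{1+2\delta+\alpha}{2\alpha},
\end{equation*}
which is the exact hypothesis of the corollary, while the upper constraint $\eta < 1$ is unchanged.

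Finally I would simply invoke estimate \eqref{semi-gr-1} of Lemma \ref{Prop-sg-1} with these parameters to obtain, for all $t\in (0,T]$,
\begin{equation*}
\Vert e^{-A^{\alpha/2} t} \Vert _{\mathcal{L}(H^{-\alpha/2}_2,\, C^{\delta})} \leq C_{\alpha,\delta,\eta,-\alpha/2}\, t^{-\eta},
\end{equation*}
and relabel the constant as $C_{\alpha,\delta,\eta}$ since $\beta$ is now fixed. There is no substantive obstacle here; the only item deserving attention is the book-keeping of parameter ranges above, which is why the choice $\beta = -\alpha/2$ is singled out as the critical negative Sobolev exponent naturally associated with the space-time white noise driving equation \eqref{FSBE-Evol-1}.
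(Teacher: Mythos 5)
Your proposal is correct and is precisely the paper's own argument: the authors also obtain the corollary by applying Lemma \ref{Prop-sg-1} with $\beta=-\frac{\alpha}{2}$, under which the condition $\delta-\beta<\alpha-\frac12$ becomes $\delta<\frac{\alpha-1}{2}$ and the lower bound on $\eta$ becomes $\frac{1+2\delta+\alpha}{2\alpha}$. Your write-up merely makes the parameter bookkeeping explicit, which the paper leaves implicit.
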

\begin{proof}
Appllying Lemma \ref{Prop-sg-1} for $1<\alpha\leq 2$, $\beta =-\frac\alpha2$ and $  \delta < \frac{\alpha-1}{2}$,
we conclude that Est.\eqref{semi-gr-1-partic-beta=alpha-2}   is fulfilled

\end{proof}

\begin{lem}\label{Prop-sg-2} 
Let $0<T<\infty$ and $1<\alpha\leq 2$ be fixed and let  $ \beta , \gamma  \in \mathbb{R}$ s.t. $ 0< \gamma-\beta <\alpha $. Then for all $ \eta\in  ( \frac{\gamma-\beta}{\alpha}, 1)$, there exists a positive constant $ C_{\alpha, \gamma, \eta, \beta} > 0$ s.t. for all $ t \in (0,T]$,
\begin{equation}\label{semi-gr-2-*}
\Vert e^{-A^{\alpha/2} t} \Vert _{\mathcal{L}(H^{\beta}_2, H^{\gamma}_2)} \leq C_{\alpha,  \gamma, \eta, \beta} t^{-\eta}.
\end{equation}
In particular, for $ \beta>\gamma$, we have 
\begin{equation}\label{semi-gr-H-beta-gamma}
\Vert e^{-A^{\alpha/2} t} \Vert _{\mathcal{L}(H^{\beta}_2, H^{\gamma}_2)} \leq 1.
\end{equation}
\end{lem}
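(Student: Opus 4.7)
The plan is to proceed by spectral decomposition, in exactly the spirit of Lemma \ref{Prop-sg-1} but working entirely on the Hilbert scale. For $v\in H^\beta_2$ I would expand $v=\sum_k \langle v,e_k\rangle e_k$ and use the diagonal action of the semigroup given in \eqref{semigrp-def} to write
\begin{equation*}
e^{-A^{\alpha/2}t}v=\sum_{k\geq 1} e^{-\lambda_k^{\alpha/2}t}\langle v,e_k\rangle e_k.
\end{equation*}
The characterization of $H^\gamma_2$ and $H^\beta_2$ in \eqref{construction-of-fract-bounded} then gives
\begin{equation*}
|e^{-A^{\alpha/2}t}v|^2_{H^\gamma_2}=\sum_{k\geq 1}\lambda_k^\gamma e^{-2\lambda_k^{\alpha/2}t}\langle v,e_k\rangle^2 =\sum_{k\geq 1}\Bigl(\lambda_k^{\gamma-\beta}e^{-2\lambda_k^{\alpha/2}t}\Bigr)\lambda_k^\beta\langle v,e_k\rangle^2,
\end{equation*}
so that it suffices to bound $\sup_{k}\lambda_k^{\gamma-\beta}e^{-2\lambda_k^{\alpha/2}t}$ uniformly in $k$.

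The key step is the elementary inequality $y^a e^{-y}\leq c_a$ valid for all $y\geq 0$ and $a\geq 0$. Applying it with $y:=2\lambda_k^{\alpha/2}t$ and $a:=2(\gamma-\beta)/\alpha\geq 0$ (which is in $[0,2)$ by hypothesis), I obtain
\begin{equation*}
\lambda_k^{\gamma-\beta}e^{-2\lambda_k^{\alpha/2}t}=(2t)^{-\frac{2(\gamma-\beta)}{\alpha}}\bigl(2\lambda_k^{\alpha/2}t\bigr)^{\frac{2(\gamma-\beta)}{\alpha}}e^{-2\lambda_k^{\alpha/2}t}\leq C_{\alpha,\gamma,\beta}\,t^{-\frac{2(\gamma-\beta)}{\alpha}}.
\end{equation*}
Summing against $\lambda_k^\beta\langle v,e_k\rangle^2=|v|^2_{H^\beta_2}$ yields the sharp bound
\begin{equation*}
\|e^{-A^{\alpha/2}t}\|_{\mathcal{L}(H^\beta_2,H^\gamma_2)}\leq C_{\alpha,\gamma,\beta}\,t^{-\frac{\gamma-\beta}{\alpha}}.
\end{equation*}
To pass from the sharp exponent to the slightly larger $\eta\in(\frac{\gamma-\beta}{\alpha},1)$ claimed in \eqref{semi-gr-2-*}, I just use that for $t\in(0,T]$ one has $t^{-\frac{\gamma-\beta}{\alpha}}\leq T^{\eta-\frac{\gamma-\beta}{\alpha}}\,t^{-\eta}$, absorbing the $T$-factor into the constant.

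For the particular case $\beta>\gamma$ in \eqref{semi-gr-H-beta-gamma}, I would argue directly without using the exponential decay: since $\lambda_k=(k\pi)^2\geq\pi^2>1$, the factor $\lambda_k^{\gamma-\beta}\leq 1$ when $\gamma-\beta<0$, and $e^{-2\lambda_k^{\alpha/2}t}\leq 1$ trivially, so $|e^{-A^{\alpha/2}t}v|^2_{H^\gamma_2}\leq|v|^2_{H^\beta_2}$. No real obstacle is anticipated — the computation is purely spectral, and the only mildly delicate points are the cosmetic passage from the sharp exponent to the open interval for $\eta$ and the verification that $a=2(\gamma-\beta)/\alpha\geq 0$ makes the scalar inequality $y^ae^{-y}\leq c_a$ applicable.
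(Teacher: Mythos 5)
Your proof is correct and follows essentially the same route as the paper's: spectral decomposition of the semigroup combined with the elementary bound $y^{a}e^{-y}\leq c_{a}$ (the paper's Lemma \ref{lem-elementary-1}), with the only cosmetic difference that you first extract the sharp exponent $t^{-(\gamma-\beta)/\alpha}$ and then relax to $t^{-\eta}$ via a power of $T$, whereas the paper applies the scalar inequality directly with the exponent $\eta>\frac{\gamma-\beta}{\alpha}$. The treatment of the case $\beta>\gamma$ also matches the paper's (and is in fact slightly more explicit about why $\lambda_k^{\gamma-\beta}\leq 1$).
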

 
\begin{proof}
Let $ \beta , \gamma  \in \mathbb{R}$ s.t. $ 0< \gamma-\beta <\alpha $ and let $ v \in H^{\beta}_{2} $. A simple calculus yields to
 \begin{eqnarray} 
\vert e^{-  A^{\alpha/2} t} v \vert_{H^{\gamma}_{2}}^{2}&=& \vert A^{\frac{\gamma}{2}} e^{-  A^{\alpha/2}t}v \vert_{L^{2}}^{2} \del{= \sum_{k=1}^{\infty} \langle A^{\frac{\gamma}{2}} e^{- A^{\alpha/2}t} v,e_{k} \rangle^{2}}
\leq 
C_{\alpha,\gamma}\sum_{k=1}^{\infty} k^{2\gamma} e^{-2 \lambda_{k}^{\frac{\alpha}{2}} t} \langle  v,e_{k} \rangle^{2}.
\end{eqnarray}
Using Lemma \ref{lem-elementary-1}, we get for a given $\eta>0$, 
\begin{eqnarray} 
\vert e^{-  A^{\alpha/2} t} v \vert_{H^{\gamma}_{2}}^{2} & \leq & C_{\alpha, \gamma, \eta}t^{-2\eta} \sum_{k=1}^{\infty} k^{2(\gamma- \alpha \eta)} \langle  v,e_{k} \rangle^{2} \leq C_{\alpha, \gamma, \eta, \beta} t^{-2\eta} \sum_{k=1}^{\infty} k^{2(\gamma-\alpha \eta -\beta)} \langle A^{\frac{\beta}{2}}v,e_{k} \rangle^{2}\nonumber \\
&\leq &  C_{\alpha,\gamma,\eta,\beta} t^{-2\eta}\vert v \vert_{H^{\beta}_{2}}^{2},   
\end{eqnarray}
provided $ 2(\gamma-\alpha \eta -\beta) < 0 $ which is equivalent to $ \eta > \frac{\gamma -\beta}{\alpha} $.\\
If $ \beta > \gamma $, we get Est. \eqref{semi-gr-H-beta-gamma} by following the same steps of the proof above and using of  the estimate $ e^{-2 \lambda_{k}^{\frac{\alpha}{2}} t} < 1$ in stead of Lemma \ref{lem-elementary-1}. 
\end{proof}
\begin{lem}\label{lem-est-sg-l-2-l-4}
Let $0<T<\infty$ and $1<\alpha\leq 2$ be fixed. Then there exists a positive constant $   C_{\alpha} > 0$ s.t. for all   $ t \in (0,T]$,
\begin{equation}\label{semi-gr-1-L-2-L-4}
\Vert e^{-A^{\alpha/2} t} \Vert _{\mathcal{L}(L^2, L^4)} \leq C_{\alpha} t^{-\frac1{4\alpha}-}.
\end{equation}
\end{lem}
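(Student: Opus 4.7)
The plan is to factor the operator $e^{-A^{\alpha/2}t}:L^{2}(0,1)\to L^{4}(0,1)$ through a fractional Sobolev space on which the smoothing estimate of Lemma \ref{Prop-sg-2} is already available. The notation $t^{-\frac{1}{4\alpha}-}$ means $t^{-\frac{1}{4\alpha}-\varepsilon}$ for an arbitrarily small $\varepsilon>0$, so I have room to lose a fraction of a derivative.

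First, I would use the one-dimensional Sobolev embedding
$$H^{s}_{2}(0,1)\hookrightarrow L^{4}(0,1),\qquad s>\tfrac{1}{4},$$
which is the sharp embedding in dimension one (recall that the critical index for $L^{p}$ is $\tfrac{1}{2}-\tfrac{1}{p}$, giving $\tfrac{1}{4}$ for $p=4$). Hence for any small $\varepsilon_{0}>0$ there exists $C_{\varepsilon_{0}}>0$ such that
$$|w|_{L^{4}}\le C_{\varepsilon_{0}}\,|w|_{H^{1/4+\varepsilon_{0}}_{2}},\qquad w\in H^{1/4+\varepsilon_{0}}_{2}(0,1).$$

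Next, I would apply Lemma \ref{Prop-sg-2} with $\beta=0$ and $\gamma=\tfrac{1}{4}+\varepsilon_{0}$. Since $1<\alpha\le 2$, the condition $0<\gamma-\beta<\alpha$ is satisfied. Choosing any $\eta$ with $\eta>\gamma/\alpha=\frac{1}{4\alpha}+\frac{\varepsilon_{0}}{\alpha}$ yields, for all $v\in L^{2}(0,1)$ and $t\in(0,T]$,
$$|e^{-A^{\alpha/2}t}v|_{H^{1/4+\varepsilon_{0}}_{2}}\le C_{\alpha,\varepsilon_{0},\eta}\,t^{-\eta}|v|_{L^{2}}.$$
Composing with the Sobolev embedding gives $|e^{-A^{\alpha/2}t}v|_{L^{4}}\le C\,t^{-\eta}|v|_{L^{2}}$, and since $\varepsilon_{0}>0$ and $\eta>\gamma/\alpha$ are arbitrary, the exponent $\eta$ can be chosen as close to $\tfrac{1}{4\alpha}$ from above as desired. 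This is exactly the claim $\|e^{-A^{\alpha/2}t}\|_{\mathcal{L}(L^{2},L^{4})}\le C_{\alpha}\,t^{-\frac{1}{4\alpha}-}$.

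I do not foresee a serious obstacle. The only nontrivial input is the sharp one-dimensional Sobolev embedding $H^{1/4+\varepsilon_{0}}_{2}(0,1)\hookrightarrow L^{4}(0,1)$, which is a standard fact listed in the appendix on function spaces and can be obtained, for instance, from the Besov scale $B^{s}_{2,2}=H^{s}_{2}$ together with the embedding $B^{s}_{2,2}\hookrightarrow L^{p}$ for $s>d(1/2-1/p)$ with $d=1$. Everything else is a straightforward application of Lemma \ref{Prop-sg-2}, with the loss $t^{-\varepsilon}$ encoding precisely the need to overshoot the critical Sobolev exponent.
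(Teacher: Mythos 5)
Your argument is correct, but it takes a genuinely different route from the paper. The paper does not use a Sobolev embedding at all: it writes $|w|_{L^4}^4\le \sup_x|w(x)|^2\cdot\int_0^1|w(x)|^2\,dx$, i.e.\ the interpolation $|w|_{L^4}^4\le |w|_{C^0}^2|w|_{L^2}^2$, and then bounds $\Vert e^{-A^{\alpha/2}t}\Vert_{\mathcal{L}(L^2,C^0)}\le Ct^{-\frac{1}{2\alpha}-}$ by Lemma \ref{Prop-sg-1} with $\beta=\delta=0$, together with the $L^2$-contractivity of the semigroup; taking the fourth root gives $t^{-\frac{1}{4\alpha}-}$. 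You instead factor through $H^{1/4+\varepsilon_0}_2(0,1)\hookrightarrow L^4(0,1)$ and invoke the $L^2\to H^{\gamma}_2$ smoothing of Lemma \ref{Prop-sg-2}. Both give the same exponent. The paper's trick is slightly more self-contained (it needs only estimates already proved for the semigroup and no embedding theorem), but it is tailored to $L^4$ as the geometric mean of $L^2$ and $L^\infty$; your route generalizes immediately to any target $L^q$ by replacing $1/4$ with $1/2-1/q$. The only point worth being careful about is that for $\gamma=1/4+\varepsilon_0<1/2$ the spectrally defined space $D(A^{\gamma/2})$ used in Lemma \ref{Prop-sg-2} does coincide with $H^{\gamma}_2(0,1)$, so the embedding you quote applies; this is fine here but would need a remark if you pushed $\gamma$ past $1/2$.
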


\begin{proof} Let $ v\in L^2$. Using the boundness of the semi group $ e^{-A^{\alpha/2} t}$ on $ L^2(0, 1)$ and Lemma \ref{Prop-sg-1} (with $ \beta=\delta=0 $), we can easily deduce that 
\begin{eqnarray}
\vert e^{-A^{\alpha/2} t}v \vert^4_{ L^4} &\leq &  \int_0^1|(e^{-A^{\alpha/2} t}v)(x)|^4 dx \leq  \sup_{x\in [0, 1]}|(e^{-A^{\alpha/2} t}v)(x)|^2 \int_0^1|(e^{-A^{\alpha/2} t}v)(x)|^2 dx \nonumber\\
&\leq & |e^{-A^{\alpha/2} t}|^2_{\mathcal{L}(L^2, C^0)} |v|_{L^2}^2|e^{-A^{\alpha/2} t}v|_{L^2}^2 \leq |e^{-A^{\alpha/2} t}|^2_{\mathcal{L}(L^2, C^0)} |v|^4_{L^2}\leq C_{\alpha} t^{-\frac1{\alpha}-}|v|^4_{L^2}.
\end{eqnarray}
\end{proof}

\begin{lem}\label{lem-sg-regul}
Let $0<T<\infty$ and $1<\alpha\leq 2$ be fixed and let $ \delta \in [0, \frac{\alpha-1}{2})$. Then for all $\gamma \in (\frac{1+\alpha+2\delta}{2\alpha}, 1)$ and all $ \eta \in (0, \gamma - (\frac{1+\alpha+2\delta}{2\alpha}))$, there exists a a positive constant $ C_{\alpha, \delta, \gamma, \eta} > 0$ s.t. for all   $ t, s \in (0,T]$,
\begin{equation}\label{est-sg-t-s}
\Vert e^{-A^{\alpha/2} t} - e^{-A^{\alpha/2} s} \Vert _{\mathcal{L}(H^{-\frac\alpha2}_2, C^{\delta})} \leq C_{\alpha, \delta, \gamma, \eta}s^{-\gamma}|t-s|^{\eta},
\end{equation} 
\end{lem}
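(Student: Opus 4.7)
Without loss of generality I take $0<s\le t\le T$. Since $(e_k)_{k\ge 1}$ diagonalizes $A$ with eigenvalues $\lambda_k$, the spectral formula \eqref{semigrp-def} gives, for $v\in H^{-\alpha/2}_{2}$,
\begin{equation*}
(e^{-A^{\alpha/2}t}-e^{-A^{\alpha/2}s})v=\sum_{k=1}^{\infty}\bigl(e^{-\lambda_{k}^{\alpha/2}t}-e^{-\lambda_{k}^{\alpha/2}s}\bigr)\langle v,e_{k}\rangle e_{k}.
\end{equation*}
The plan is to take the $C^{\delta}$-norm inside the sum, use the scalar inequality $|e^{-as}-e^{-at}|=e^{-as}(1-e^{-a(t-s)})\le e^{-as}\bigl(a(t-s)\bigr)^{\eta}$ (valid for $\eta\in[0,1]$, $a\ge 0$, $t\ge s$) with $a=\lambda_{k}^{\alpha/2}$, and then combine with the Hölder bound $|e_{k}|_{C^{\delta}}\le C k^{\delta}$ from Lemma \ref{lem-e-k-holder} and the eigenvalue asymptotics $\lambda_{k}\sim (k\pi)^{2}$. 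This will reduce the estimate to controlling
\begin{equation*}
|t-s|^{\eta}\sum_{k=1}^{\infty}k^{\alpha\eta+\delta}\,e^{-\lambda_{k}^{\alpha/2}s}\,|\langle v,e_{k}\rangle|.
\end{equation*}

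Next, I would isolate the $H^{-\alpha/2}_{2}$-norm of $v$ by writing $|\langle v,e_{k}\rangle|=\lambda_{k}^{\alpha/4}\bigl(\lambda_{k}^{-\alpha/4}|\langle v,e_{k}\rangle|\bigr)$ and applying the Cauchy--Schwarz inequality; this yields
\begin{equation*}
\sum_{k=1}^{\infty}k^{\alpha\eta+\delta}\,e^{-\lambda_{k}^{\alpha/2}s}\,|\langle v,e_{k}\rangle|\le\Big(\sum_{k=1}^{\infty}k^{2(\alpha\eta+\delta)}\lambda_{k}^{\alpha/2}\,e^{-2\lambda_{k}^{\alpha/2}s}\Big)^{1/2}|v|_{H^{-\alpha/2}_{2}}.
\end{equation*}
The decisive step is then to invoke Lemma \ref{lem-elementary-1} in the form $e^{-2\lambda_{k}^{\alpha/2}s}\le C\,s^{-2\gamma}\lambda_{k}^{-\alpha\gamma}$, which upgrades the remaining sum into
\begin{equation*}
Cs^{-2\gamma}\sum_{k=1}^{\infty}k^{2(\alpha\eta+\delta)+\alpha-2\alpha\gamma}.
\end{equation*}

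The final step is the convergence analysis of this series: it converges precisely when $2(\alpha\eta+\delta)+\alpha-2\alpha\gamma<-1$, i.e.\ $\eta<\gamma-\tfrac{1+\alpha+2\delta}{2\alpha}$, which is exactly the assumption on $\eta$. Combined with the restriction $\gamma<1$ coming from the admissible range where Lemma \ref{lem-elementary-1} produces no loss, and $\delta<\tfrac{\alpha-1}{2}$ guaranteeing that the lower bound $\tfrac{1+\alpha+2\delta}{2\alpha}<1$ leaves room for $\gamma$, we obtain the claimed estimate $\|e^{-A^{\alpha/2}t}-e^{-A^{\alpha/2}s}\|_{\mathcal{L}(H^{-\alpha/2}_{2},C^{\delta})}\le C\,s^{-\gamma}|t-s|^{\eta}$.

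The main obstacle is not the Cauchy--Schwarz step, but correctly balancing the three exponents $(\delta,\gamma,\eta)$: the term $k^{\alpha\eta}$ created by the Hölder-in-time trick and the term $k^{\delta}$ from $|e_{k}|_{C^{\delta}}$ must together be dominated by the factor $\lambda_{k}^{-\alpha\gamma/2}$ extracted from the exponential, while preserving enough of the exponential decay to keep the series summable; the admissible window $\eta\in(0,\gamma-\tfrac{1+\alpha+2\delta}{2\alpha})$ in the statement is exactly what this bookkeeping forces.
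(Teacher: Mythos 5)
Your proposal is correct and follows essentially the same route as the paper's proof: spectral decomposition, the factorization $e^{-as}(1-e^{-a(t-s)})$ bounded by the two elementary scalar inequalities (Lemmas \ref{lem-elementary-1} and \ref{lem-elementary-2}), the bound $|e_k|_{C^\delta}\le Ck^\delta$, and Cauchy--Schwarz to extract $|v|_{H^{-\alpha/2}_2}$, leading to the identical summability condition $\eta<\gamma-\frac{1+\alpha+2\delta}{2\alpha}$. The only (immaterial) difference is that you apply Cauchy--Schwarz before absorbing $e^{-\lambda_k^{\alpha/2}s}$ into a power of $k$, whereas the paper does it in the opposite order.
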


\begin{proof}
Let $ v\in H^{-\frac\alpha2}_2(0, 1)$. Using the semigroup property, lemmas \ref{lem-e-k-holder}, \ref{lem-elementary-1} and \ref{lem-elementary-2}, we infer, for $\gamma, \eta \in (0, 1)$, that
\begin{eqnarray}
| (e^{-A^{\alpha/2} t} - e^{-A^{\alpha/2} s})v|_{C^\delta}&=&
| e^{-A^{\alpha/2} s}(e^{-A^{\alpha/2} (t-s)} - I)v|_{C^\delta}\nonumber\\
&\leq & \sum_{k=1}^\infty
e^{-\lambda_k^{\alpha/2}s}(1-e^{-\lambda_k^{\alpha/2}(t-s)})|\langle v, e_k\rangle||e_k|_{C^\delta}\nonumber\\
&\leq & C_{\alpha, \delta, \gamma}\sum_{k=1}^\infty
k^{-\alpha\gamma}s^{-\gamma}(1-e^{-\lambda_k^{\alpha/2}(t-s)})|\langle v, e_k\rangle|k^\delta\nonumber\\
&\leq & C_{\alpha, \delta, \gamma, \eta} \sum_{k=1}^\infty k^{-\alpha\gamma} s^{-\gamma} k^{\alpha\eta} (t-s)^\eta|\langle v, e_k\rangle|k^\delta \nonumber\\
&\leq & C_{\alpha, \delta, \gamma, \eta} s^{-\gamma}(t-s)^\eta\sum_{k=1}^\infty k^{-\alpha\gamma}  k^{\alpha\eta} |\langle v, e_k\rangle|k^{\delta+\alpha/2} k^{-\alpha/2}
\end{eqnarray}
Now, we apply H\"older inequality, we get
\begin{eqnarray}
| (e^{-A^{\alpha/2} t} - e^{-A^{\alpha/2} s})v|_{C^\delta}
&\leq & C_{\alpha, \delta, \gamma, \eta} s^{-\gamma}(t-s)^\eta(\sum_{k=1}^\infty k^{2(\alpha(1/2-\gamma+\eta)+\delta)})^{1/2}(\sum_{k=1}^\infty k^{-\alpha} \langle v, e_k\rangle^2)^{1/2} \nonumber\\
&\leq & C_{\alpha, \delta, \gamma, \eta} s^{-\gamma}(t-s)^\eta|v|_{H^{-\frac\alpha2}_2},
\end{eqnarray}
provided that $ \delta \in [0,\frac{\alpha -1}{2}) $, $\gamma \in (\frac{1+\alpha+2\delta}{2\alpha}, 1)$ and $ \eta \in (0, \gamma - (\frac{1+\alpha+2\delta}{2\alpha}))$.  
\end{proof}

\begin{coro}\label{coro-sg-regul}
Assume that the  conditions of Lemma \ref{lem-sg-regul} are satisfied,  then there exists a positive function $ C_{\alpha, \delta}(\cdot)$ on $(0, T]$,  s.t. for all  $ t\in (0,T]$,
\begin{equation}\label{est-sg-t-0}
\Vert e^{-A^{\alpha/2} t} - e^{-A^{\alpha/2} t_{0}} \Vert _{\mathcal{L}(H^{-\frac\alpha2}_2, C^{\delta})} \leq C_{\alpha, \delta}(t_0)|t-t_0|^{(\frac{\alpha-1-2\delta}{2\alpha})-}.
\end{equation}
\end{coro}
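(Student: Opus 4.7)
My plan is to obtain the corollary as a direct application of Lemma \ref{lem-sg-regul}, by fixing one argument of the operator-norm difference at $t_0$ and then optimizing the two free parameters $(\gamma,\eta)$ appearing in the lemma. First, I would assume without loss of generality that $t\geq t_0$ (the case $t<t_0$ is symmetric after exchanging the roles of $t$ and $t_0$ on the left-hand side). Then applying Lemma \ref{lem-sg-regul} with $s=t_0$ yields immediately
\[
\bigl\Vert e^{-A^{\alpha/2}t}-e^{-A^{\alpha/2}t_0}\bigr\Vert_{\mathcal{L}(H^{-\frac{\alpha}{2}}_2,\,C^{\delta})}\;\leq\;C_{\alpha,\delta,\gamma,\eta}\,t_0^{-\gamma}\,|t-t_0|^{\eta},
\]
valid for every admissible pair, i.e.\ $\gamma\in(\frac{1+\alpha+2\delta}{2\alpha},\,1)$ and $\eta\in(0,\,\gamma-\frac{1+\alpha+2\delta}{2\alpha})$.

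The second step is to push the admissible exponent $\eta$ as close as possible to $\frac{\alpha-1-2\delta}{2\alpha}$. Observe that
\[
\sup_{\gamma\in(\frac{1+\alpha+2\delta}{2\alpha},\,1)}\Bigl(\gamma-\tfrac{1+\alpha+2\delta}{2\alpha}\Bigr)\;=\;1-\tfrac{1+\alpha+2\delta}{2\alpha}\;=\;\tfrac{\alpha-1-2\delta}{2\alpha},
\]
with the supremum attained only in the limit $\gamma\uparrow1$. Consequently, for any target exponent $\eta_{\ast}<\frac{\alpha-1-2\delta}{2\alpha}$ one selects $\gamma\in(\eta_{\ast}+\frac{1+\alpha+2\delta}{2\alpha},\,1)$, which renders $\eta_{\ast}$ admissible in the lemma; the resulting multiplicative constant $C_{\alpha,\delta,\gamma,\eta_{\ast}}\,t_0^{-\gamma}$ depends only on $\alpha,\delta,\eta_{\ast}$ and $t_0$, so it can be relabeled $C_{\alpha,\delta}(t_0)$. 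This produces precisely the H\"older exponent denoted $(\frac{\alpha-1-2\delta}{2\alpha})-$ in the statement, encoding the fact that the supremum is approached but not attained.

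I do not expect any genuine analytic obstacle, since the heavy lifting is already contained in Lemma \ref{lem-sg-regul}; the corollary is essentially a bookkeeping step consisting of an admissibility check plus a relabeling of constants. The only point requiring care is the uniform treatment of $t<t_0$: by the symmetry of the left-hand side, the symmetric application of the lemma gives the same bound with $\min(t,t_0)^{-\gamma}$ in place of $t_0^{-\gamma}$, which is comparable to $t_0^{-\gamma}$ whenever $t$ stays bounded below by a fixed multiple of $t_0$, i.e.\ in the regime in which the estimate is meant to be used (temporal H\"older continuity in a neighborhood of $t_0$ bounded away from zero).
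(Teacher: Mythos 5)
Your proof is correct and follows essentially the same route as the paper, which obtains the corollary by invoking Lemma \ref{lem-sg-regul} with $\gamma = 1-\epsilon$ and $\eta = \tfrac{\alpha-1-2\delta}{2\alpha}-\epsilon$ and relabeling the constant. Your extra care with the case $t<t_0$ (where the singular factor becomes $t^{-\gamma}$ rather than $t_0^{-\gamma}$) addresses a point the paper's one-line proof glosses over.
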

\begin{proof}
Est.\eqref{est-sg-t-0} can be easily obtained from Est.\eqref{est-sg-t-s} with $\gamma =1-\epsilon,$ $\epsilon>0$ and $\eta= \frac{\alpha-1-2\delta}{2\alpha}-\epsilon$.
\end{proof}

\del{\begin{lem}\label{lem-sg-regul}
Let $0<T<\infty$ and $1<\alpha\leq 2$ be fixed and let  $ \delta \in [0, 1)$. Then there exists a positive constant $ C_{\alpha, \delta} > 0$ s.t. for all   $ t, s \in (0,T]$,
\begin{equation}\label{est-sg-t-s}
\Vert e^{-tA^{\alpha/2}} - e^{-sA^{\alpha/2}} \Vert _{\mathcal{L}(H^{-\frac\alpha2}_2, C^{\delta})} \leq C_{\alpha, \delta}s^{-\gamma}|t-s|^{\eta},
\end{equation}
with $\gamma >\frac{1+\alpha+2\delta}{2\alpha}$ and $ \eta \in (0, \min\{1, \gamma - (\frac{1+\alpha+2\delta}{2\alpha})\})$. 

In particular, for $\delta<\frac{\alpha-1}{2}$, we can choose  $\gamma \in (\frac{1+\alpha+2\delta}{2\alpha}, 1)$ and $ \eta \in (0, \gamma - (\frac{1+\alpha+2\delta}{2\alpha}))$. 

\end{lem}
\begin{proof}
Let $ v\in H^{-\frac\alpha2}_2(0, 1)$, using the semigroup property, lemmas \ref{lem-e-k-holder}, \ref{lem-elementary-2} and 
\ref{lem-elementary-1},

\begin{eqnarray}
| (e^{-tA^{\alpha/2}} - e^{-sA^{\alpha/2}})v|_{C^\delta}&=&
| e^{-sA^{\alpha/2}}(e^{-(t-s)A^{\alpha/2}} - I)v|_{C^\delta}\nonumber\\
&\leq & \sum_{k=1}^\infty
e^{-\lambda_k^{\alpha/2}s}(e^{-\lambda_k^{\alpha/2}s} - 1)|\langle v, e_k\rangle||e_k|_{C^\delta}\nonumber\\
&\leq & C \sum_{k=1}^\infty k^{-\alpha\gamma} s^{-\gamma} k^{\alpha\eta} (t-s)^\eta|\langle v, e_k\rangle|k^\delta \nonumber\\
&\leq & C s^{-\gamma}(t-s)^\eta\sum_{k=1}^\infty k^{-\alpha\gamma}  k^{\alpha\eta} |\langle v, e_k\rangle|k^{\delta+\alpha/2} k^{-\alpha/2}
\end{eqnarray}
with $\gamma, \eta \in (0, 1)$. We apply H\"older inequality, we get
\begin{eqnarray}
| (e^{-tA^{\alpha/2}} - e^{-sA^{\alpha/2}})v|_{C^\delta}
&\leq & C s^{-\gamma}(t-s)^\eta(\sum_{k=1}^\infty k^{2(\alpha(1/2-\gamma+\eta)+\delta)})^{1/2}(\sum_{k=1}^\infty k^{-\alpha} \langle v, e_k\rangle^2)^{1/2} \nonumber\\
&\leq & C_{\alpha, \delta} s^{-\gamma}(t-s)^\eta|v|_{H^{-\frac\alpha2}_2},
\end{eqnarray}
provided that $\gamma >\frac{1+\alpha+2\delta}{2\alpha}$ and $ \eta \in (0, \gamma - (\frac{1+\alpha+2\delta}{2\alpha}))$. Under the condition $ \delta<\frac{\alpha-1}{2}$, we can get $\frac{1+\alpha+2\delta}{2\alpha}< \gamma <1$.\del{It is easy to deduce the particular case.}  
\end{proof}

\begin{coro}\label{coro-sg-regul}
Under the conditions of Lemma \ref{lem-sg-regul}. Then for all $ t_0$ there exists a positive constant $ C_{\alpha, \delta, t_0} > 0$ s.t. for all  $ t\in (0,T]$,
\begin{equation}\label{est-sg-t-0}
\Vert e^{-tA^{\alpha/2}} - e^{-t_0A^{\alpha/2}} \Vert _{\mathcal{L}(H^{-\frac\alpha2}_2, C^{\delta})} \leq C_{\alpha, \delta, t_0}|t-t_0|^{(\frac{\alpha-1+2\delta}{2\alpha})-}.
\end{equation}
\end{coro}
\begin{proof}
Est. \eqref{est-sg-t-0} can be easily obtained from Est. \eqref{est-sg-t-s} with $\gamma =1-\epsilon,$ $\epsilon>0$ and 
$\eta= \frac{\alpha-1+2\delta}{2\alpha}-\epsilon$.
\end{proof}
The useful case for us is Estimate \eqref{semi-gr-1} for $ \beta =-\frac\alpha2$ and  $ \delta <\frac{\alpha-1}{2}$. In fact, we have the following corollary:

\begin{coro}\label{Coro-1} For $1<\alpha\leq 2$, $  \delta < \frac{\alpha-1}{2}$ and thanks to Lemma \ref{Prop-sg-1} and Corollary \ref{coro-sg-regul},  
we conclude that the first part of  $\textit{Assumption 1.}$  is satisfied for $ U=H^{-\frac\alpha2}$ and $ V= C^\delta$.
\end{coro}}

\subsection{Definition and properties of the nonlinear drift term.}\label{sub-sec-nonlin}

The general form of the nonlinear part of the drift term is given by a function $F$ satisfying \textit{Assumption 2.},  with $ V:= C^{\delta}(0,1)$,  for a relevant $\delta \in (0, 1)$  and $ U:= H^{-\alpha/2}_2(0, 1)$. Our typical example is,
\begin{equation}\label{Def-F}
F(u)(x)=\frac{d f(u(x))}{dx} , \;\; \text{with} \;\; f: H^{1-\frac\alpha2}_2(0, 1) \rightarrow C^{\delta}(0,1)\;\; \text{ being locally Lipschitz}
\end{equation}
Specially,  the case when $f$  is a polynomial; $f(x):= a_0+a_1x+a_2x^2+ ... a_nx^n,$ with $ a_0, a_1, ..., a_n\neq 0\in \mathbb{R}, \; n\in \mathbb{N}$. It is easily seen that for $ n=2$ (i.e. $ a_2\neq 0$) and $ a_1=0$, we recuperate the fractional stochastic Burgers equation. This fact justifies the name of Burgers type. For $ n=1$, the drift is then linear and the equation is nothing than the fractional stochastic heat equation with globally Lipschitz coefficients. This study covers, with slight modifications, the case  when the coefficients $(a_j(\cdot))_{j=0}^n$ are differentiable real functions with  bounded derivatives.  Moreover, the proofs can be easily extended to the case of no autonomuous function $f(t, x, u)$, under switable conditions.

\begin{lem}\label{main-nonlinear}
Let $ 1 < \alpha \leq 2$,  $ \delta \in ( 1-\frac{\alpha}{2}, 1) $\del{$ \delta > 1-\frac{\alpha}{2} $ } and let $F$ be given by \eqref{Def-F}, with $ f$ being a polynomial of order $n$, then the following mapping 
\begin{eqnarray}
F &:& C^{\delta}(0,1) \rightarrow H^{-\frac{\alpha}{2}}_2(0, 1)\nonumber \\
& &
u \mapsto F(u)= \frac{d f(u(\cdot))}{dx}, \del{\frac{\partial f(u)}{\partial x},}
\end{eqnarray}
 is well defined. Moreover, for all  $R>0$, there exists a positive constant $ C_{R} $ such that for every $  u, \; v \in C^{\delta}(0,1) $ with $ | u | _{C^{\delta}}  \; , |v |_{C^{\delta}} \leq R$, the following inequality holds
\begin{equation}\label{main-inq-nolin}
| F(u)-F(v) |_{H^{-\frac{\alpha}{2}}_2} \leq C_{R} |u-v |_{C^{\delta}}.
\end{equation}  
\end{lem}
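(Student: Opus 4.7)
The strategy is to factor $F$ as $F = \partial_x \circ N_f$, where $N_f: u \mapsto f(u)$ is the Nemytskii operator, and to exploit the fact that the assumption $\delta \in (1-\alpha/2, 1)$ forces the intermediate exponent $s := 1 - \alpha/2$ to satisfy $0 < s < \min(\delta,\tfrac12)$. This places everything in a regime where the Dirichlet spectral scale coincides with the usual Slobodeckij scale, and where $C^\delta$ embeds continuously into $H^{s}_2$.

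\textbf{Step 1 (Nemytskii on $C^\delta$).} For $\delta\in(0,1)$ the space $C^\delta(0,1)$ is a Banach algebra, so every polynomial $f$ of degree $n$ induces a locally Lipschitz map $N_f:C^\delta\to C^\delta$. Writing the algebraic identity
\begin{equation*}
f(u)-f(v) \;=\; (u-v)\,P_f(u,v), \qquad P_f(u,v):=\sum_{k=1}^{n} a_k\sum_{j=0}^{k-1} u^{j} v^{k-1-j},
\end{equation*}
and using the algebra inequality, one obtains
\begin{equation*}
|f(u)-f(v)|_{C^\delta} \;\le\; |u-v|_{C^\delta}\,|P_f(u,v)|_{C^\delta}\;\le\; C_R\,|u-v|_{C^\delta}
\end{equation*}
whenever $|u|_{C^\delta},|v|_{C^\delta}\le R$, since $P_f$ is a polynomial of degree $n-1$ in two Hölder arguments.

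\textbf{Step 2 (Sobolev embedding).} Because $1<\alpha\le 2$, the target regularity $s=1-\alpha/2$ lies in $[0,1/2)$, and by hypothesis $\delta>s$. On the bounded interval $(0,1)$ this yields the continuous embedding $C^\delta(0,1)\hookrightarrow H^{s}_2(0,1)$, which can be read off from the Besov identifications $C^\delta=B^\delta_{\infty,\infty}$, $H^s_2=B^s_{2,2}$ collected in the appendix. In particular $|g|_{H^{1-\alpha/2}_2}\le C\,|g|_{C^\delta}$.

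\textbf{Step 3 (Spatial derivative into the negative scale).} It remains to see that $\partial_x$ is bounded from $H^{1-\alpha/2}_2(0,1)$ into $H^{-\alpha/2}_2(0,1)$, with the latter understood through the spectral definition \eqref{construction-of-fract-bounded}. Because $s=1-\alpha/2<1/2$, the space $H^{s}_2$ does not see Dirichlet boundary values, hence for every $g\in H^s_2$ and every basis element $e_k$ the distributional pairing gives
\begin{equation*}
\langle \partial_x g, e_k\rangle \;=\; -\langle g, \partial_x e_k\rangle, \qquad \partial_x e_k=\sqrt{2}\,k\pi\cos(k\pi x),
\end{equation*}
with no boundary contribution. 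Duality between $H^{s}_2$ and $H^{-s}_2\cdot\lambda^{1/2}$-type weights (equivalently, the identity $\partial_x:H^{s}_2\to H^{s-1}_2$ in the Slobodeckij scale, followed by the equivalence of that scale with the spectral scale in this regularity range) gives
\begin{equation*}
|\partial_x g|_{H^{-\alpha/2}_2}\;\le\; C\,|g|_{H^{1-\alpha/2}_2}.
\end{equation*}

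\textbf{Step 4 (Conclusion).} Chaining the three bounds,
\begin{equation*}
|F(u)-F(v)|_{H^{-\alpha/2}_2} \le C\,|f(u)-f(v)|_{H^{1-\alpha/2}_2} \le C\,|f(u)-f(v)|_{C^\delta} \le C_R\,|u-v|_{C^\delta},
\end{equation*}
which is \eqref{main-inq-nolin}; well-definedness of $F:C^\delta\to H^{-\alpha/2}_2$ follows by taking $v\equiv 0$ and invoking $f(0)\in\mathbb R\subset H^{-\alpha/2}_2$.

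The routine piece is Step~1, which is pure algebra in a Banach algebra. The real obstacle is Step~3: one must justify that the derivative, landing a priori only in the distributional scale $H^{s-1}_2$, actually produces an element of the spectrally-defined space $H^{-\alpha/2}_2$ with the desired norm estimate. The cleanest route is a direct Fourier-sine computation, estimating $\sum_k \lambda_k^{-\alpha/2}\langle\partial_x g,e_k\rangle^2=\sum_k \lambda_k^{-\alpha/2}\langle g,\partial_x e_k\rangle^2$ by inserting a factor $\lambda_k^{(1-\alpha/2)/... }$ and applying Cauchy–Schwarz in the spirit of the proofs of Lemmas~\ref{Prop-sg-1}–\ref{lem-sg-regul}, which is exactly why the condition $\delta>1-\alpha/2$ (ensuring $s<1/2$) is imposed.
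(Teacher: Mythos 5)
Your proof is correct and follows essentially the same route as the paper: the embedding $C^{\delta}\hookrightarrow H^{1-\alpha/2}_2$ (valid since $\delta>1-\alpha/2$), the loss of one derivative under $\partial_x$ to land in $H^{-\alpha/2}_2$, and the multiplication-algebra property of $C^{\delta}$ applied to the polynomial factorization $f(u)-f(v)=(u-v)\sum_k a_k\sum_j u^j v^{k-1-j}$. Your Step 3 merely makes explicit (via the Fourier-sine computation and the coincidence of the spectral and Slobodeckij scales for $|s|<1/2$) a boundedness claim that the paper asserts without comment in its first displayed inequality.
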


\begin{proof}
Let $\; u , \; v \in C^{\delta}(0,1) $ such that $ |u|_{C^{\delta}}  \; , |v|_{C^{\delta}} \leq R $ for a given  $ R > 0 $, thanks to  the Imbedding \eqref{Imbd-req-3} in Lemma \ref{lem-ess-general}, we get
\begin{equation*}
|F(u)-F(v) |_{H^{-\frac{\alpha}{2}}_2} \leq C|f(u)-f(v))|_{H^{1-\frac{\alpha}{2}}_2} \leq C |f(u)-f(v)|_{C^{\delta}}.
\end{equation*}
First, we consider the case $ n=1$.  Then 
\begin{eqnarray}
| F(u)-F(v) |_{H^{-\frac{\alpha}{2}}_2} \leq  C | a_1| | u-v|_{C^{\delta}}.
\end{eqnarray}
In this case $F$ is  globaly Lipschitz. Now For $n\geq 2$, we use the definition of $f$ and the fact that $ C^\delta$ is a multiplication algebra, see Lemma \ref{lem-ess-general}, we get  
\begin{eqnarray}
| F(u)-F(v) |_{H^{-\frac{\alpha}{2}}_2} \leq  C  \sum_{k=1}^{n}  \sum_{j=0}^{k-1}|a_k ||u^{k-1-j}v^{j}|_{C^{\delta}} | u-v|_{C^{\delta}} \leq  
 n^2CR^n | u-v |_ {C^{\delta}}.
\end{eqnarray}
\end{proof}

\begin{coro}\label{Coro-nolinearterm}
For $ 1 < \alpha \leq 2$,  $ \delta \in ( 1-\frac{\alpha}{2}, 1)$,  $F$ given by \eqref{Def-F}, with $ f$ being a polynomial, $ U=H^{-\frac\alpha2}_2(0, 1)$ and $ V=C^{\delta}(0,1)$, then  
thanks to Lemma \ref{main-nonlinear}, \textit{Assumption 2.} is fulfilled. 
\end{coro}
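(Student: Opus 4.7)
The plan is to show that Assumption 2 of Theorem \ref{Theorem-Blomker-Jentzen}, which demands that
\begin{equation*}
\sup_{\substack{|u|_V, |v|_V \leq r \\ u \neq v}} \frac{|F(u)-F(v)|_U}{|u-v|_V} < \infty
\end{equation*}
for every $r>0$, follows directly from Lemma \ref{main-nonlinear} once we specialize $V = C^{\delta}(0,1)$ and $U = H^{-\alpha/2}_2(0,1)$. Under the hypotheses of the corollary, namely $1 < \alpha \leq 2$ and $\delta \in (1-\alpha/2, 1)$, the assumptions of Lemma \ref{main-nonlinear} are satisfied, so that lemma is directly applicable.

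First I would fix an arbitrary $r>0$ and apply Lemma \ref{main-nonlinear} with $R=r$. This yields a constant $C_r>0$ such that for all $u,v \in C^{\delta}(0,1)$ with $|u|_{C^{\delta}}, |v|_{C^{\delta}} \leq r$ the estimate
\begin{equation*}
|F(u)-F(v)|_{H^{-\alpha/2}_2} \leq C_r\,|u-v|_{C^{\delta}}
\end{equation*}
holds. Dividing by $|u-v|_{C^{\delta}}$ whenever $u \neq v$ and taking the supremum over all such admissible pairs bounds the quantity in Assumption 2 by $C_r < \infty$. Since $r$ was arbitrary, Assumption 2 is fulfilled.

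There is no real obstacle: the content of the corollary is essentially a translation of Lemma \ref{main-nonlinear} into the terminology of Theorem \ref{Theorem-Blomker-Jentzen}. The only point worth checking is the matching of parameter ranges: the restriction $\delta > 1-\alpha/2$ needed to invoke the embedding \eqref{Imbd-req-3} and the multiplication algebra property of $C^{\delta}$ used inside the proof of Lemma \ref{main-nonlinear} coincides exactly with the lower bound placed on $\delta$ in the statement of the corollary, and the polynomial assumption on $f$ provides the bounded-set Lipschitz estimate uniformly in $r$.
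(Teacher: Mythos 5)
Your proposal is correct and coincides with the paper's own (essentially tautological) justification: the corollary is obtained by reading the locally Lipschitz estimate \eqref{main-inq-nolin} of Lemma \ref{main-nonlinear} with $R=r$ as exactly the finiteness of the supremum in Assumption 2, for the choices $V=C^{\delta}(0,1)$ and $U=H^{-\alpha/2}_2(0,1)$. The only slight imprecision is the phrase ``uniformly in $r$'' at the end --- the constant $C_r$ does depend on $r$, but Assumption 2 only requires finiteness for each fixed $r$, so this does not affect the argument.
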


\begin{coro}\label{Coro-nolinearterm+1}
For $ \alpha \in (\frac32,  2]$,  $\delta \in ( 1-\frac{\alpha}{2}, \frac{\alpha-1}{2})$,  $F$ given by \eqref{Def-F}, with $ f$ being a polynomial, $ U=H^{-\frac\alpha2}_2(0, 1)$ and $ V=C^\delta(0, 1)$ then  thanks to lemmas \ref{Prop-sg-1} and \ref{main-nonlinear}, the first Part of \textit{Assumption 1.} and \textit{Assumption 2.} are simultenuously fulfilled. 
\end{coro}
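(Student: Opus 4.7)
The plan is a direct verification: since both Assumption 1 (first part) and Assumption 2 have already been established as separate lemmas, the only real content here is to check that the \emph{same} value of $\delta$ is admissible in both, which will force the threshold $\alpha>3/2$.

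First I would invoke Corollary \ref{Coro-1} (a special case of Lemma \ref{Prop-sg-1} with $\beta=-\alpha/2$) to obtain, for every $\delta\in[0,(\alpha-1)/2)$ and every $\eta\in\bigl(\tfrac{1+2\delta+\alpha}{2\alpha},1\bigr)$, a constant $C_{\alpha,\delta,\eta}>0$ such that
\begin{equation*}
\sup_{t\in(0,T]}\bigl(t^{\eta}\,\Vert e^{-A^{\alpha/2}t}\Vert_{\mathcal{L}(H^{-\alpha/2}_2,C^{\delta})}\bigr)\le C_{\alpha,\delta,\eta}.
\end{equation*}
Setting $S(t):=e^{-A^{\alpha/2}t}$, $U:=H^{-\alpha/2}_2(0,1)$, $V:=C^{\delta}(0,1)$ and choosing any such $\eta$ shows that \eqref{Eq-Assum-1-1} holds, i.e. the first part of \textit{Assumption 1.} is satisfied. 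Continuity of $S$ as a map $(0,T]\to\mathcal{L}(U,V)$ is a standard consequence of the analyticity of the semigroup together with Corollary \ref{coro-sg-regul}.

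Next I would apply Lemma \ref{main-nonlinear}: for $\delta\in(1-\alpha/2,1)$ the nonlinear map $F:C^{\delta}(0,1)\to H^{-\alpha/2}_2(0,1)$ defined in \eqref{Def-F} is locally Lipschitz with polynomial growth of the local constant, so \eqref{Eq-Assum-2} holds with the same $U$ and $V$. Thus \textit{Assumption 2.} is satisfied.

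The last step is the compatibility check on $\delta$. Combining the two conditions requires $\delta$ to lie simultaneously in $[0,(\alpha-1)/2)$ and $(1-\alpha/2,1)$, so in $(1-\alpha/2,(\alpha-1)/2)$. This interval is non\-empty precisely when $1-\alpha/2<(\alpha-1)/2$, i.e.\ $\alpha>3/2$, which is the hypothesis of the corollary; then any $\delta\in(1-\alpha/2,(\alpha-1)/2)$ works for both assumptions simultaneously, and the proof is complete.

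No genuine obstacle is expected: all the analytic work is packaged into Lemma \ref{Prop-sg-1} and Lemma \ref{main-nonlinear}. The only mildly delicate point is to make sure the exponent $\eta$ demanded by \eqref{Eq-Assum-1-1} can actually be chosen in $[0,1)$, which is why one needs $\tfrac{1+2\delta+\alpha}{2\alpha}<1$, equivalently $\delta<(\alpha-1)/2$; this is exactly the upper bound for $\delta$ already imposed, so there is no hidden constraint beyond $\alpha>3/2$.
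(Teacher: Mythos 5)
Your proposal is correct and follows essentially the same route as the paper: the corollary is proved there simply by citing Lemma \ref{Prop-sg-1} (via Corollary \ref{Coro-1} with $\beta=-\alpha/2$, requiring $\delta<\tfrac{\alpha-1}{2}$) together with Lemma \ref{main-nonlinear} (requiring $\delta>1-\tfrac{\alpha}{2}$), and observing that the two ranges for $\delta$ intersect exactly when $\alpha>\tfrac32$. Your explicit compatibility check and the remark on choosing $\eta\in[0,1)$ make the same points the paper leaves implicit.
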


\begin{coro}\label{Coro-linear-growth}
\del{Thanks to \eqref{main-inq-nolin}, we deduce the local linear growth of the nonlinear term, i.e. }For  $R>0$, there exists  a positive constant $ C_{R} $ such that for every $  u \in C^{\delta}(0,1) $ with $ |u |_{C^{\delta}} \leq R $, 
\begin{equation}
| F(u)|_{H^{-\frac{\alpha}{2}}_2} \leq C_{R}(1+ |u|_{C^{\delta}}).
\end{equation}  
\end{coro}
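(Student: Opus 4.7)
The plan is to obtain this estimate as an immediate specialization of Lemma \ref{main-nonlinear} at $v=0$. First I would observe that $F(0)=0$: since $f$ is a polynomial, evaluating $f$ at the zero function produces the constant $f(0)=a_0$, whose spatial derivative vanishes identically, so $F(0)=\tfrac{d}{dx}f(0)=0$ as an element of $H^{-\alpha/2}_2(0,1)$.

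Next, I would apply Lemma \ref{main-nonlinear} with the admissible pair $(u,v)=(u,0)$ — admissible because $|0|_{C^\delta}=0\leq R$ and $|u|_{C^\delta}\leq R$ by hypothesis — to obtain
\begin{equation*}
|F(u)|_{H^{-\alpha/2}_2} \;=\; |F(u)-F(0)|_{H^{-\alpha/2}_2} \;\leq\; C_R\, |u-0|_{C^\delta} \;=\; C_R\, |u|_{C^\delta}.
\end{equation*}
The stated local linear growth then follows at once from the trivial inequality $|u|_{C^\delta}\leq 1+|u|_{C^\delta}$.

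There is essentially no obstacle: the corollary is a one-line consequence of the locally Lipschitz estimate already proved in Lemma \ref{main-nonlinear}. The apparently redundant ``$1+$'' on the right-hand side is included so that the statement remains valid under the slight extensions mentioned in Subsection \ref{sub-sec-nonlin} (for instance, when the coefficients $a_j(\cdot)$ are non-constant bounded differentiable functions, or when one allows a non-autonomous $f(t,x,u)$). In such cases $F(0)$ need no longer vanish, and writing $|F(u)|_{H^{-\alpha/2}_2}\leq |F(0)|_{H^{-\alpha/2}_2}+C_R|u|_{C^\delta}$ absorbs the constant term into the ``$1$'' on the right-hand side, preserving the same form of the bound.
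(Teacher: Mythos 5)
Your proof is correct and coincides with the paper's own argument, which likewise obtains the bound by taking $v=0$ in Est.\eqref{main-inq-nolin} (and, in a commented-out version, adds $|F(0)|$ by the triangle inequality, exactly as in your closing remark). Your additional observation that $F(0)=0$ for polynomial $f$, and your explanation of the role of the ``$1+$'' term, are accurate refinements of the same one-line specialization.
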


\begin{proof} It is suffisant to take $v=0$, in the Est.\eqref{main-inq-nolin}.\del{, we get 
\begin{equation}
| F(u)|_{H_2^{-\frac{\alpha}{2}}} \leq | F(u)-F(0) |_{H_2^{-\frac{\alpha}{2}}}+ |F(0)|) \leq C_{R} |u|_{C^{\delta}} + |F(0)|.
\end{equation} } 

\end{proof}
In \cite{Blomker-Jentzen}, the authors assumed the existence of a family $(X_N)_N$ satisfying suitabel conditions, see Assumption 4, Cond.\eqref{Eq-Assum-4-1} \& Cond.\eqref{Eq-Assum-4}). In our work, we give sufficient conditions for the existence of such family.  We assume that: 
\begin{itemize}
\item There exists a function $g:\mathbb{R}^2 \rightarrow \mathbb{R}$, such that 
\begin{equation}\label{cond-f-g}
f(x)- f(y)= (x-y)g(x, y) \;\; \& \;\; \forall R>0, \; \exists \; C_R, \; s.t. \forall  x, y,  |x|, |y|\leq R,\;\; |g(x, y)|\leq C_R.
\end{equation}
\item There exist $m\in \mathbb{N}_0$, $(c_j)_{j=1}^{m}, c_j>0$\del{, $(\nu_j)_{j=1}^{m}$} and $(\mu_j)_{j=1}^{m}$, with $ 0<\mu_j<2$, such that for all $v\in H^\frac\alpha2_2(0, 1), \; \xi \in C^\delta(0, 1)$, 
\begin{equation}\label{con-F-supp}
|\langle F(v+\xi), v \rangle| \leq \sum_{j=1}^m c_j|v|^{\mu_j}_{H^\frac\alpha2_2}|\xi|^j_{C^\delta}+ |v|_{L^2} |v|_{H^\frac\alpha2_2} (\sum_{j=1}^m c_j|\xi|^j_{C^\delta}).
\end{equation}
\end{itemize}

\subsection{Definition of the stochastic term.}
We fix a stochastic basis $ (\Omega, \mathcal{F}, \mathbb{P}, \mathbb{F}, W)$, where
$ (\Omega, \mathcal{F}, \mathbb{P}) $  is a complete probability space, $\mathbb{F} := (\mathcal{F}_t)_{t\geq 0}$ is a filtration satisfying the usual conditions, i.e. $(\mathcal{F}_t)_{t\geq 0}$ is an increasing right continuous filtration. The  Wiener process
$ W:= (W(t), t\in [0, T])$ is a mean zero
 Gaussian process defined (on the filtered probability space $ (\Omega,
\mathbb{F}, \mathbb{P}, \mathbb{F} )$, such that the covariance function is given by:
\begin{equation}\label{Eq-Cov-W}
\mathbb{E}[W(t)W(s)]= (t\wedge s)I,  \;\;\;  \forall\;\;  t,s \geq 0,
\end{equation}
where $ I$ is the identity. Formally,  we rewrite $ W$ as the sum of an infinite series;
\begin{equation}\label{W-series}
W(t)= \sum_{k=1}^{\infty}  \beta_{k}(t)e_{k}, \; \; \;  \mathbb{P}-a.s ,
\end{equation}
where $ (\beta_{k})_{k\geq1}$ is a family of independent standard Brownian motions and $ (e_{k}(.)= \sqrt{2}\sin (k \pi .)) $ is an orthonormal basis in the space $ L^{2}(0,1)$.
We introduce the following Ornstein-Uhlenbeck stochastic process (OU) 
\del{\begin{equation}\label{OU-process}
\mathcal{W}(t):= \int_0^te^{-A^{\alpha/2} (t-s)}GW(ds),
\end{equation}
with $ G\in \mathcal{L}(H)$ is a bounded operator satisfying 
\begin{equation}
Ge_k=q_ke_k, 
\end{equation}
with $ (q_k)_{k=1}^\infty$ is a real sequence \del{complex-valued measurable functions} satisfying
\begin{equation}
\sum_{k=1}^{\infty} k^{-\alpha}q_k^2 <\infty.
\end{equation}}
\begin{equation}\label{OU-process}
\mathcal{W}(t):= \int_0^te^{-A^{\alpha/2} (t-s)}W(ds).
\end{equation}
It is easy to see that  $\mathcal{W}$ is well defined for all $\alpha>1$, see e.g. \cite{DebbiDozzi1}.


\del{\section{Discretization} Let us fix $ N, M \geq 1 $ and consider the uniform step subdivision of the time interval $[0, T]$, with time step  $\Delta t = \frac{T}{M}$. We define $t_{m}= m \Delta t , \; for \; m=1,..., M$.
 }
\subsection{Definition of the Galerkin approximation.} We fix $ N\geq 1$.
\del{We recall in $ L^2(0, 1)$ the projection $P_N$ on the finite dimension spaces generated by the $N$th first vectors of the basis.} We denote by $P_N$, the Galerkin projection on the finite  space $H_N$ generated by the $N$ first eigenvectors $(e_k)_{k=1}^N$, i.e. for $ \delta>0$,  $ v\in C^{\delta}(0,1)\subset L^2(0, 1)$ and for all $ x\in [0, 1]$,
\begin{equation}\label{P-N-Brut}
P_N v (x)= \sum_{k=1}^N\langle v, e_k\rangle e_k(x).
\end{equation}

\begin{lem}\label{Lem-Est-1-PN}
\begin{itemize}
\item $P_N$ and $e^{-tA^{\alpha/2}}$ commute.
\item Let $ \delta \in [0,1) $ and $\eta >\delta+\frac12$, then there exists $ C_{\delta,\eta}>0,$ s.t.
\begin{equation}\label{est-P_n-H-eta-C-delta}
\Vert P_N \Vert_{\mathcal{L}(H^{\eta}_2, C^\delta)}\leq  C_{\delta, \eta,}.
\end{equation}
\item Let $\beta\leq \gamma \in \mathbb{R}$, then there exists $ C_{\gamma, \beta}>0,$ s.t.
\begin{equation}\label{Est-1-P-N}
\Vert 1-P_N \Vert_{\mathcal{L}(H^{\gamma}_2, H^{\beta}_2)}\leq  C_{\gamma, \beta} N^{-(\gamma-\beta)}.
\end{equation}
\item Let $ \alpha \in (1, 2]$ and  $\delta\in [0, \frac{\alpha-1}{2})$, then there exists $ C_{\gamma, \beta}>0,$ s.t.
\begin{equation}
N^{(\frac{\alpha-1}{2}-\delta)-}\Vert(1-P_N)e^{-A^{\alpha/2} t}\Vert_{\mathcal{L}(\del{H^{(\delta+\frac12)+}_2}H^{\alpha/2}_2, C^{\delta})}\leq  C_{\alpha, \delta}.
\end{equation}
\item Let $\beta \in \mathbb{R}$, then there exists $ C_{\beta}>0,$ s.t.
\begin{equation}\label{est-P_n-H-beta}
\Vert P_N \Vert_{\mathcal{L}(H^{\beta}_2)}\leq  C_{\beta},
\end{equation}
\end{itemize}
\end{lem}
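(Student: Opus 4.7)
The approach throughout is spectral: every object ($P_N$, $1-P_N$, the semigroup $e^{-tA^{\alpha/2}}$, and the $H^\beta_2$ norms) is diagonal in the eigenbasis $(e_k)_{k\ge 1}$, so each claim reduces to an elementary scalar series estimate. Item~1 (commutation) is then immediate, since each $e_k$ is simultaneously an eigenvector of $P_N$ (with eigenvalue $0$ or $1$) and of $e^{-tA^{\alpha/2}}$ (with eigenvalue $e^{-\lambda_k^{\alpha/2}t}$).

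For the last item, a direct computation gives $|P_N v|_{H^\beta_2}^2=\sum_{k=1}^N\lambda_k^{\beta}\langle v,e_k\rangle^2\le |v|_{H^\beta_2}^2$, so one may take the constant to be $1$. The decay \eqref{Est-1-P-N} is equally direct: writing $|(1-P_N)v|_{H^\beta_2}^2=\sum_{k>N}\lambda_k^{\beta-\gamma}\cdot\lambda_k^{\gamma}\langle v,e_k\rangle^2$ and using that $\beta\le\gamma$ together with $\lambda_k\sim k^2$ yields the factor $\lambda_N^{\beta-\gamma}\lesssim N^{-2(\gamma-\beta)}$ in front of $|v|_{H^\gamma_2}^2$.

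The $C^\delta$-estimates are where Lemma \ref{lem-e-k-holder} is needed, supplying $|e_k|_{C^\delta}\lesssim k^\delta$. For \eqref{est-P_n-H-eta-C-delta}, expanding $P_N v=\sum_{k=1}^N\langle v,e_k\rangle e_k$ and using Cauchy--Schwarz gives
\[
|P_N v|_{C^\delta}\le\sum_{k=1}^N|\langle v,e_k\rangle|\,|e_k|_{C^\delta}\lesssim\Bigl(\sum_{k\ge 1}k^{2(\delta-\eta)}\Bigr)^{1/2}|v|_{H^\eta_2},
\]
and the first sum converges exactly under the hypothesis $\eta>\delta+\tfrac12$. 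The main item~4 combines this idea with the commutation from item~1: crudely bounding $e^{-\lambda_k^{\alpha/2}t}\le 1$ gives
\[
|(1-P_N)e^{-A^{\alpha/2}t}v|_{C^\delta}\le C\sum_{k>N}k^\delta|\langle v,e_k\rangle|\le C\Bigl(\sum_{k>N}k^{2\delta-\alpha}\Bigr)^{1/2}|v|_{H^{\alpha/2}_2},
\]
and under $\delta<(\alpha-1)/2$ the tail sum is $\sim N^{2\delta-\alpha+1}$, producing the decay rate $N^{-(\frac{\alpha-1}{2}-\delta)}$; the ``$-$'' in the displayed exponent simply guards against the boundary case where $\sum k^{2\delta-\alpha}$ ceases to converge.

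The only obstacle is bookkeeping: one must keep track of which summability threshold ($\eta>\delta+\tfrac12$, $\delta<(\alpha-1)/2$, $\gamma\ge\beta$) is binding in each item, and how the eigenvalue asymptotic $\lambda_k\sim k^2$ (together with $|e_k|_{C^\delta}\lesssim k^\delta$) converts the target Sobolev exponents into the rates in $N$. Since the arguments are all variants of Cauchy--Schwarz on tails of power series, no genuine analytic difficulty appears beyond matching the conditions needed later when the lemma is invoked inside the Galerkin--exponential-Euler scheme.
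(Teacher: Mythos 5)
Your proof is correct and, for the commutation, the bound \eqref{est-P_n-H-eta-C-delta}, the decay \eqref{Est-1-P-N}, and the uniform $H^\beta_2$-boundedness of $P_N$, it coincides with the paper's argument (the paper simply omits the first and last items as easy). The one place you diverge is the fourth item: the paper commutes $(1-P_N)$ past the semigroup and then composes two previously established operator bounds, namely the time-uniform estimate \eqref{aide-2-1-Bis} for $\Vert e^{-A^{\alpha/2}t}\Vert_{\mathcal{L}(H^{(\delta+\frac12)+}_2,\,C^\delta)}$ with \eqref{Est-1-P-N} applied between $H^{\alpha/2}_2$ and $H^{(\delta+\frac12)+}_2$; this factorization through the intermediate space is exactly what produces the $\varepsilon$-loss encoded in the exponent $(\frac{\alpha-1}{2}-\delta)-$. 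You instead estimate the tail directly, bounding $e^{-\lambda_k^{\alpha/2}t}\le 1$, using $|e_k|_{C^\delta}\lesssim k^\delta$ from Lemma \ref{lem-e-k-holder}, and applying Cauchy--Schwarz to $\sum_{k>N}k^{\delta-\alpha/2}\,k^{\alpha/2}|\langle v,e_k\rangle|$; since $\delta<\frac{\alpha-1}{2}$ makes $\sum_{k>N}k^{2\delta-\alpha}\sim N^{2\delta-\alpha+1}$ convergent, this yields the clean rate $N^{-(\frac{\alpha-1}{2}-\delta)}$ with no $\varepsilon$ sacrificed, which is in fact slightly sharper than what the paper's composition gives (your closing remark that the ``$-$'' guards against a borderline divergence is therefore not needed under the stated strict inequality). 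Both routes are elementary spectral estimates, so nothing is lost either way; the paper's version has the organizational advantage of reusing already-proved operator bounds.
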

\begin{proof} We omit the proofs of the first and last statements as they are easy. Now, we prove  the second one. Let $v\in C^{\delta}(0,1)$, thanks to  Identity \eqref{P-N-Brut} and Lemma \ref{lem-e-k-holder}, it is easy to see that 
\begin{eqnarray}
|P_Nv|_{C^\delta}&\leq & \sum_{k=1}^N|\langle v, e_k\rangle||e_k|_{C^\delta}\leq C_\delta \sum_{k=1}^N|\langle v, e_k\rangle|k^\delta \nonumber\\
&\leq &  C_\delta \sum_{k=1}^N|\langle v, k^\eta e_k\rangle|k^{\delta-\eta} \leq  C^\delta \sum_{k=1}^N|\langle A^{\eta/2}v, e_k\rangle|k^{\delta-\eta}.
\end{eqnarray}
Using H\"older inequality and the condition $\eta >\delta+\frac12$, we deduce that 
\begin{eqnarray}
|P_Nv|_{C^\delta}&\leq &  C_\delta (\sum_{k=1}^\infty\langle A^{\eta/2}v,  e_k\rangle^2)^\frac12 (\sum_{k=1}^\infty k^{2(\delta-\eta)})^\frac12 \leq C_{\delta, \eta}|v|_{H^\eta_2}.
\end{eqnarray}

For the third estimate, we consider $v\in H^{\gamma}_2$, then
\begin{eqnarray}
|(1-P_N)v|^2_{H^{\beta}_2} &= & |A^{\beta/2}(1-P_N)v|^2_{L^2}= 
\sum_{k=1}^\infty \langle  A^{\beta/2}(1-P_N)v, e_k \rangle^2 \leq  \sum_{k=N+1}^\infty   \lambda_k^{\beta} \langle v, e_k \rangle^2 \nonumber\\   
&\leq & \sum_{k=N+1}^\infty   \lambda_k^{\beta-\gamma} \langle A^{\gamma/2}v, e_k \rangle^2   \leq  \pi^{2(\beta-\gamma)} \sum_{k=N+1}^\infty   k^{2(\beta-\gamma)} \langle A^{\gamma/2}v, e_k \rangle^2   
 \nonumber\\
& \leq & \pi^{2(\beta-\gamma)} N^{ 2(\beta-\gamma)}\sum_{k=N+1}^\infty   \langle A^{\gamma/2}v, e_k \rangle^2 \leq  \pi^{2(\beta-\gamma)} N^{ 2(\beta-\gamma)} |v|_{H^{\gamma}_2}^2. 
\end{eqnarray}
For the fourth estimate,  we assume that $ v\in  H_2^{\alpha/2}$ and we prove that there exists $ C_{\alpha, \delta}>0$, such that  
\begin{equation}\label{Eq-Assum-3-proo-u-0}
\sup_{N\in \mathbb{N}}\sup_{t\in (0, T]}\big(N^{(\frac{\alpha-1}{2} -\delta)-} \del{t^\alpha}|(1-P_N)e^{-tA^{\alpha/2}}v|_{C^\delta}\big)<\infty. 
\end{equation}
In fact, by application of Lemma  \ref{Lem-Est-1-PN}, \del{Lemma \ref{Prop-sg-1} } Est.\eqref{aide-2-1-Bis} and Est.\eqref{Est-1-P-N}, we infer the existence of $ C_{\alpha, \delta}>0$, such that
\begin{eqnarray}
|(1-P_N)e^{-A^{\alpha/2} t}v|_{C^\delta} &=& |e^{-A^{\alpha/2} t}(1-P_N)v|_{C^\delta} \nonumber\\
&\leq &  |e^{-A^{\alpha/2} t}|_{\mathcal{L}( H^{(\delta+\frac12)+}_2, C^\delta)} |1-P_N|_{\mathcal{L}(H^{\alpha/2}_2, H^{(\delta+\frac12)+}_2)} |v|_{ H^{\alpha/2}_2}\nonumber\\
&\leq & C_{\alpha, \delta} N^{-(\frac{\alpha-1}{2}-\delta)+}|v|_{H^{\alpha/2}_2}.
\end{eqnarray}
\end{proof}

\begin{coro}  Let  $0<T<\infty$, $1<\alpha\leq 2$, $ \delta \in [0, 1)$.
\begin{itemize}
\item Let  $ \beta \in \mathbb{R},$ such that $  \delta-\beta < \alpha-\frac{1}{2}$. Then, for all $\eta\in (\frac{1+2\delta-2\beta}{2\alpha}, 1)$, there exists a positive constant $  C_{\alpha, \delta, \beta, \eta} > 0$ s.t. for all   $ t \in (0,T]$,
\begin{equation}\label{semi-gr-1-P-N}
\Vert e^{-A^{\alpha/2} t}P_N \Vert _{\mathcal{L}(H^{\beta}, C^{\delta})} \leq C_{\alpha, \delta ,\beta, \eta}t^{-\eta}
\end{equation}
and for $ \beta>\frac12$ and $ \delta <\beta-\frac12$, there exists a positive constant $ C_{\delta, \beta} > 0$ s.t.
\begin{equation}\label{aide-2-1-Bis-P-N}
\Vert e^{-A^{\alpha/2} t} P_N\Vert _{\mathcal{L}(H^{\beta}, C^{\delta})} \leq C_{\delta ,\beta}.
\end{equation}
\item Let $ \beta \leq \gamma  \in \mathbb{R}$. Then for all  $ \eta'\in  ( \frac{\gamma-\beta}{\alpha}, 1)$, there exists a positive constant $C_{\alpha, \beta, \gamma} > 0$ s.t. for all   $ t \in (0,T]$,
\begin{equation}\label{semi-gr-2-*_P_N}
\Vert e^{-tA^{\alpha/2}} P_N\Vert _{\mathcal{L}(H^{\beta}_2, H^{\gamma}_2)} \leq C_{\alpha, \beta, \gamma}t^{-\eta'}.
\end{equation}
In particular, for $ \beta>\gamma$, we have 
\begin{equation}\label{semi-gr-H-beta-gamma}
\Vert e^{-tA^{\alpha/2}} P_N\Vert _{\mathcal{L}(H^{\beta}_2, H^{\gamma}_2)} \leq 1.
\end{equation}

\end{itemize}
\end{coro}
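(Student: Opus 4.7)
The plan is to exploit the commutation property $P_N e^{-A^{\alpha/2} t} = e^{-A^{\alpha/2} t} P_N$ (first bullet of Lemma \ref{Lem-Est-1-PN}) together with the fact that $P_N$ is bounded on the Sobolev scale (last bullet of Lemma \ref{Lem-Est-1-PN}, namely $\|P_N\|_{\mathcal{L}(H^\beta_2)}\leq C_\beta$ for every $\beta\in\mathbb{R}$). This reduces every inequality in the corollary to the corresponding semigroup estimate already established in Lemma \ref{Prop-sg-1} or Lemma \ref{Prop-sg-2}, applied to $P_N v$ rather than to $v$.

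For the first bullet, I would take $v\in H^{\beta}_2$ and write
\begin{equation*}
\big| e^{-A^{\alpha/2} t} P_N v\big|_{C^\delta}
\leq \big\| e^{-A^{\alpha/2} t}\big\|_{\mathcal{L}(H^\beta_2, C^\delta)} \big| P_N v\big|_{H^\beta_2}
\leq \big\| e^{-A^{\alpha/2} t}\big\|_{\mathcal{L}(H^\beta_2, C^\delta)} C_\beta |v|_{H^\beta_2}.
\end{equation*}
Inserting Est.\eqref{semi-gr-1} under the hypothesis $\delta-\beta<\alpha-\tfrac12$ with $\eta\in (\frac{1+2\delta-2\beta}{2\alpha},1)$ yields \eqref{semi-gr-1-P-N}, while inserting Est.\eqref{aide-2-1-Bis} under the hypothesis $\beta>\tfrac12$, $\delta<\beta-\tfrac12$ yields \eqref{aide-2-1-Bis-P-N}.

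For the second bullet, taking $v\in H^\beta_2$ and using exactly the same factorization through $P_N$,
\begin{equation*}
\big| e^{-A^{\alpha/2} t} P_N v\big|_{H^\gamma_2}
\leq \big\| e^{-A^{\alpha/2} t}\big\|_{\mathcal{L}(H^\beta_2, H^\gamma_2)} \big| P_N v\big|_{H^\beta_2}
\leq C_\beta \big\| e^{-A^{\alpha/2} t}\big\|_{\mathcal{L}(H^\beta_2, H^\gamma_2)} |v|_{H^\beta_2},
\end{equation*}
and then invoking Est.\eqref{semi-gr-2-*} (when $0<\gamma-\beta<\alpha$, for $\eta'\in(\frac{\gamma-\beta}{\alpha},1)$) gives \eqref{semi-gr-2-*_P_N}; invoking Est.\eqref{semi-gr-H-beta-gamma} in the case $\beta>\gamma$ gives the stated bound by $1$ (up to the absorbed constant $C_\beta$, which in that regime one can alternatively eliminate by applying $P_N$ on the other side and using $\|P_N\|_{\mathcal{L}(H^\gamma_2)}\leq 1$ for the orthogonal projection restricted to its invariant spaces).

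There is no real obstacle: the corollary is genuinely a bookkeeping consequence of the two previous lemmas and the boundedness/commutation properties of $P_N$ collected in Lemma \ref{Lem-Est-1-PN}. The only point requiring minor care is the constant $1$ in \eqref{semi-gr-H-beta-gamma} in the $\beta>\gamma$ case: rather than writing $\|e^{-A^{\alpha/2} t}P_N\|\leq C_\beta\cdot 1$, one should remark that $P_N$ is an orthogonal projection with respect to the $H^\gamma_2$ inner product on the span of the eigenvectors, so $|P_N v|_{H^\gamma_2}\leq |v|_{H^\gamma_2}$, and then apply Est.\eqref{semi-gr-H-beta-gamma} on $H^\gamma_2\hookrightarrow H^\gamma_2$ after the trivial embedding $|v|_{H^\gamma_2}\leq|v|_{H^\beta_2}$ coming from $\beta>\gamma$; this gives the clean constant $1$.
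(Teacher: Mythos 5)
Your proposal is correct and follows essentially the same route as the paper, which likewise obtains the corollary by combining Lemma \ref{Prop-sg-1} (resp. Lemma \ref{Prop-sg-2}) with the commutation and boundedness properties of $P_N$ from Lemma \ref{Lem-Est-1-PN}. Your extra remark on recovering the clean constant $1$ in \eqref{semi-gr-H-beta-gamma} via $|P_Nv|_{H^\gamma_2}\leq |v|_{H^\gamma_2}$ is a minor but welcome refinement of the same argument.
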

\begin{proof}
Combinning Lemma \ref{Prop-sg-1} and Lemma \ref{Lem-Est-1-PN},  
we conclude the first statement and  Similarly, combinning Lemma \ref{Prop-sg-2} and Lemma \ref{Lem-Est-1-PN} we get the second one.
\end{proof}
We introduce the following discretized version of Eq.\eqref{FSBE-Evol-1}, using the spectral Galerking method:

\begin{equation}\label{Discr-Galerkin-FSBE-Evol-1}
\left\{
\begin{array}{lr}
du_N(t)=[-A^{\alpha/2} u_N(t) + P_NF(u_N(t))]dt+ dW_N(t),\;\; t\in [0, T],\\
u(0)=P_Nu_0,
\end{array}
\right.
\end{equation}
where  \begin{equation}\label{W-N-series}
W_N(t):= P_NW(t)= \sum_{k=1}^{N}  \beta_{k}(t)e_{k}\del{ \; \; \;  \mathbb{P}-a.s,}
\end{equation}

\subsection{Fully Discretization}\label{subsec-full-discrt}
Let us fix $ M \geq 1 $ and consider the uniform step subdivision of the time interval $[0, T]$, with time step  $\Delta t = \frac{T}{M}$. We define $t_{m}= m \Delta t , \; for \; m=1,..., M$. We construct the sequence of random variables $(u_{N, M}^{m})_{m=0}^{M}$ as:
\begin{equation}
\left\{
\begin{array}{rl}
u_{N, M}^{0}& := P_N u_0,\\
u_{N, M}^{m+1} & := e^{-A^{\alpha/2}T/M}\Big( u_{N, M}^{m} + \frac{T}{M}(P_NF)(u_{N, M}^{m})\Big) + P_N\Big( \mathcal{W}((m+1)\frac{T}{M} )-  e^{-A^{\alpha/2}T/M}\mathcal{W}((m)\frac{T}{M})  \Big).    
\end{array}
\right.
\end{equation}
 Let us mention here that we can also rewrite $u_{N, M}^m$ as 
\begin{equation}\label{eq-u-N-M-m}
\left\{
\begin{array}{rl}
u_{N, M}^{0}& := P_N u_0,\\ 
u_{N,M}^{m}& := e^{-A^{\alpha/2}t_{m}} u_{N,M}^0 + \Delta t \sum_{k=0}^{m-1}e^{-A^{\alpha/2}(t_{m}-t_{k})} P_N F(u_{N,M}^{k}) + \mathcal{W}_N(t_m)    
\end{array}
\right.
\end{equation}
and that the sequence $ (u^m_{N, M})_m $ has the following property, see the proof in Subsection \ref{subsec-proof-lem}.

\begin{lem}\label{u-N-M-bounded},
Let $ \frac74<\alpha< 2$, $\delta\in (1-\frac{\alpha}{2}, \frac{2\alpha-3}{2})$, $ F$ defined as in \del{cby $f(x)=x^2$} Subsection \ref{sub-sec-nonlin} and  $ u_0$ satiesfies Assumption $ \mathcal{A}$. \del{$ u_0 \in C^\delta \cap H^{\eta}(0,1) $ s.t. $ \eta > \delta + \frac{1}{2} $} 
Then there exists a finite $\mathcal{F}_0$-random variable $ C_{\alpha, \delta}$, s.t. for all $ \omega \in \Omega$, 
\begin{equation}\label{eq-u-m-N-M-bounded}
\sup_{m, N, M}|u_{N, M}^m(\omega)|_{C^\delta} < C_{\alpha, \delta}(\omega),
\end{equation}
 
\end{lem}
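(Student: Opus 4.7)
The plan is to split off the Ornstein--Uhlenbeck part, derive a pathwise deterministic recursion for the remainder, and then combine an energy estimate with a mild-formulation bootstrap.

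First, I would set $\xi^m_{N,M} := \mathcal{W}_N(t_m)$ and $v^m_{N,M} := u^m_{N,M} - \xi^m_{N,M}$. Plugging this into \eqref{eq-u-N-M-m} and using that $P_N$ commutes with $e^{-A^{\alpha/2}\cdot}$, one checks that $v^0_{N,M} = P_N u_0$ and
\begin{equation*}
v^{m}_{N,M} = e^{-A^{\alpha/2} t_m}\,v^0_{N,M} + \Delta t \sum_{k=0}^{m-1} e^{-A^{\alpha/2}(t_m - t_k)} P_N F(v^k_{N,M} + \xi^k_{N,M}).
\end{equation*}
Thanks to the path-wise regularity of the OU process proved in Section \ref{sec-Est-Stoch-Terms} and the bound \eqref{est-P_n-H-eta-C-delta} on $P_N$, there is an $\mathcal{F}_0$-measurable random variable $K(\omega) < \infty$ with $\sup_{N,M,m}|\xi^m_{N,M}(\omega)|_{C^\delta} \le K(\omega)$, which reduces the problem to a pathwise bound on $v^m_{N,M}$.

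Second, I would perform an energy estimate in $L^2$. Expanding on the eigenbasis of $A$ and using $e^{-2\lambda_k^{\alpha/2}\Delta t} \le 1 - c\lambda_k^{\alpha/2}\Delta t$ for the low modes (combined with $e^{-2\lambda_k^{\alpha/2}\Delta t}\le 1$ on the high modes), one obtains the discrete dissipation
\begin{equation*}
|v^{m+1}_{N,M}|^2_{L^2} - |v^{m}_{N,M}|^2_{L^2} + c\,\Delta t\, |v^{m+1}_{N,M}|^2_{H^{\alpha/2}_2} \le 2\Delta t\,\langle F(v^m_{N,M} + \xi^m_{N,M}), v^{m+1}_{N,M}\rangle + R,
\end{equation*}
where $R$ absorbs lower-order error terms coming from the exponential-Euler step. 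Invoking the structural assumption \eqref{con-F-supp} with $\xi = \xi^m_{N,M}$ and exploiting $\mu_j < 2$, Young's inequality lets me absorb the $|v|^{\mu_j}_{H^{\alpha/2}_2}$ and $|v|_{L^2}|v|_{H^{\alpha/2}_2}$ contributions into a fraction of the dissipation term, yielding
\begin{equation*}
|v^{m+1}_{N,M}|^2_{L^2} - |v^{m}_{N,M}|^2_{L^2} + \tfrac{c}{2}\Delta t\, |v^{m+1}_{N,M}|^2_{H^{\alpha/2}_2} \le C(\omega)\Delta t\bigl(1 + |v^m_{N,M}|^2_{L^2}\bigr).
\end{equation*}
A discrete Gronwall inequality then gives uniform bounds $\sup_m |v^m_{N,M}|_{L^2} \le \widetilde C(\omega)$ and $\Delta t \sum_{m} |v^m_{N,M}|^2_{H^{\alpha/2}_2} \le \widetilde C(\omega)$.

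Third, I would upgrade from $L^2 \cap H^{\alpha/2}_2$ control to $C^\delta$ control via the mild form above. Using \eqref{semi-gr-1-P-N} with $\beta = -\alpha/2$, the smoothing factor is $(t_m - t_k)^{-\eta}$ with $\eta > (1+\alpha+2\delta)/(2\alpha)$, and the constraints $\alpha > 7/4$ and $\delta < (2\alpha-3)/2$ are exactly what is needed to pick such an $\eta$ strictly less than $1$, so the kernel is integrable in the Riemann sense uniformly in $M$. The initial term $e^{-A^{\alpha/2}t_m}P_N u_0$ is bounded in $C^\delta$ thanks to Assumption $\mathcal{A}$ on $u_0$ via \eqref{aide-2-1-Bis-P-N} together with \eqref{est-P_n-H-eta-C-delta}. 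For the Duhamel sum, Lemma \ref{lem-ess-general} bounds $|F(v+\xi)|_{H^{-\alpha/2}_2}$ by a polynomial in $|v|_{H^{\alpha/2}_2}$ and $|\xi|_{C^\delta}$, so the $\Delta t$-weighted $\ell^2$ control on $|v^k_{N,M}|_{H^{\alpha/2}_2}$ and the pathwise $L^\infty$ control on $|\xi^k_{N,M}|_{C^\delta}$ turn the discrete convolution into a sum to which a Gronwall--Henry--type lemma applies, producing $\sup_m |v^m_{N,M}|_{C^\delta} \le C(\omega)$. Adding back $|\xi^m_{N,M}|_{C^\delta}\le K(\omega)$ yields \eqref{eq-u-m-N-M-bounded}.

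The main obstacle is the bootstrap in the third step, since $F$ is only locally Lipschitz and of polynomial-of-degree-$n$ growth in $C^\delta$: a naive estimate of $|F(v^k + \xi^k)|_{H^{-\alpha/2}_2}$ in terms of $|v^k|_{C^\delta}$ would close a nonlinear discrete Gronwall and could blow up. The resolution, using the structural assumption \eqref{con-F-supp} in step two to pre-pay a bound in $H^{\alpha/2}_2$ summed against $\Delta t$, is what allows the final step to collapse to a linear Gronwall inequality in the $C^\delta$ norm.
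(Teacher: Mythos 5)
Your skeleton (subtract the Ornstein--Uhlenbeck part, energy estimate via \eqref{con-F-supp}, bootstrap to $C^\delta$ through the mild form) mirrors the paper's strategy, but your third step does not close as described. The energy estimate only yields $\sup_m|v^m_{N,M}|_{L^2}\le C(\omega)$ and $\Delta t\sum_m|v^m_{N,M}|^2_{H^{\alpha/2}_2}\le C(\omega)$, so $|F(v^k_{N,M}+\xi^k_{N,M})|_{H^{-\alpha/2}_2}$ is controlled essentially only in $\ell^1(\Delta t)$ in $k$. The Duhamel kernel $(t_m-t_k)^{-\eta}$ with $\eta>(1+\alpha+2\delta)/(2\alpha)>1/2$ lies in $\ell^{q}(\Delta t)$ only for $q<1/\eta<2$, and H\"older/Young for the discrete convolution then cannot produce an $\ell^\infty_m$ bound: one would need the nonlinearity in $\ell^{q'}(\Delta t)$ with $q'>2\alpha/(\alpha-1-2\delta)$, which the ``pre-paid'' $H^{\alpha/2}_2$ control does not give (interpolation against the uniform $L^2$ bound falls just short). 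Relatedly, your claim that $\alpha>7/4$ and $\delta<(2\alpha-3)/2$ are ``exactly what is needed'' for $\eta<1$ is not correct --- $(1+\alpha+2\delta)/(2\alpha)<1$ only requires $\delta<(\alpha-1)/2$. In the paper these hypotheses enter elsewhere: the bootstrap uses the \emph{uniform-in-time} $L^2$ bound and the quadratic structure $F(v)=\partial_x v^2$ to pass $L^2\to L^4\to C^\delta$ in two smoothing steps (Lemma \ref{lem-est-sg-l-2-l-4}, Lemma \ref{Lemma-DeBre}, Corollary \ref{coro-y-N-c-delta}), and it is the exponents $\tfrac{7}{4\alpha}<1$ and $\tfrac{3+2\delta}{2\alpha}<1$ there that force $\alpha>7/4$ and $\delta<(2\alpha-3)/2$.

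Your second step is also problematic as a purely discrete energy estimate: $e^{-2\lambda_k^{\alpha/2}\Delta t}\le 1-c\,\lambda_k^{\alpha/2}\Delta t$ cannot hold with a uniform $c$ over all modes (the right-hand side is negative for large $\lambda_k^{\alpha/2}\Delta t$), and the low/high splitting only yields a dissipation of the form $\sum_k\min(\lambda_k^{\alpha/2}\Delta t,1)\langle v,e_k\rangle^2$ rather than $\Delta t\,|v|^2_{H^{\alpha/2}_2}$; moreover \eqref{con-F-supp} controls the same-level pairing $\langle F(v+\xi),v\rangle$, whereas your scheme produces the cross-level pairing $\langle F(v^m+\xi^m),v^{m+1}\rangle$, leaving an uncontrolled difference term. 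The paper avoids both issues by introducing the continuous-time interpolant $Z_{N,M}$ of \eqref{eq-Z-N-M} --- an equation with Dirac-comb forcing whose value at $t_m$ equals $u^m_{N,M}-\mathcal{W}_N(t_m)$ --- running the \emph{continuous} energy argument of Lemma \ref{lem-deter-diff-eq} on it, and then repeating the $C^\delta$-bootstrap of the proof of Theorem \ref{Theorem-disc}; Lemma \ref{lem-Soundous} supplies the uniform $C^\delta$ bound on $\mathcal{W}_N$ exactly as in your first step. Adopting that interpolant and the $L^2\to L^4\to C^\delta$ chain would repair the argument.
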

In the end of this section we give our assumption of the initial condition:

\noindent \textbf{Assumption $ \mathcal{A} $.} For $ \delta \in [0,1) $ and $ \beta > \delta +\frac{1}{2} $, we have $ u_0: \Omega \rightarrow H^{\beta}_{2}(0, 1) $ is a $ \mathcal{F}_0 $ random variable.


\section{Definitions and results.}\label{sec-Def-Results}
In this section we present the main defintions and results. We define the mild solution  in a  general framework as in \cite{Neerven-Evolution-Eq-08},
\begin{defn}\label{def-mild-solution}
Let $ X$ be an UMD-Banach space of type 2 and $H$ be a Hilbert space. Assume that $ u_0:\Omega\rightarrow X$ is strongly $ \mathcal{F}_0-$measurable\del{$ \in L^p(\Omega, \mathcal{F}_0, P; X)$}.  A strongly measurable $ \mathcal{F}_t-$adapted $ X$-valued
stochastic process, $(u(t), t\in [0, T])$, is called a mild solution of Eq.\eqref{FSBE-Evol-1} if
\begin{itemize}
\item (i) for all $ t\in [0, T]$, $ s \mapsto e^{-(t-s)A^{\alpha/2}}F(u(s))$ is in $ L^0(\Omega, L^1(0, t: X))$,
\item  (ii) for all $ t\in [0, T]$, $ s \mapsto e^{-(t-s)A^{\alpha/2}} G$ is $ H-$strongly measurable $ \mathcal{F}_t-$adapted and a.s. in the $\gamma-$Radonifying space; $ R(H, X)$,   
\item (iii) $ \forall t \in [0, T]$, the following equality holds in $ X$,  $ P-a.s.$ 
\begin{equation}\label{Eq-Mild-Solution}
u(t)= e^{-A^{\alpha/2} t}u_0 +
\int_0^t e^{-A^{\alpha/2} (t-s)}F(u (s))ds + \int_0^te^{-A^{\alpha/2} (t-s)}W(ds).
\end{equation}
\end{itemize}
\end{defn}

\begin{defn}\label{def-uniqueness}
We say that a pathwise uniqueness holds for Eq.\eqref{FSBE-Evol-1}  if for any two solutions $ (u^1(t), t\in [0, T])$ and $ (u^2(t), t\in [0, T])$ starting from the same initial data $ u_0$, we have
\begin{equation}
P[u^1(t)= u^2(t), \;\; \forall \; t\in[0, T]]=1.
\end{equation}
\end{defn}
First of all, we give the following auxilliary result,

\begin{theorem}\label{Theorem-disc}
Let $ T > 0$, $\alpha \in (\frac{3}{2}, 2)$, $ \delta \in (1-\frac{\alpha}{2}, \frac{\alpha-1}{2})$ and let $u_0$ satisfies Assumption $\mathcal{A}$. Then Eq.\eqref{Discr-Galerkin-FSBE-Evol-1} admits a unique $L^2-$mild solution $u_N:= (u_N(t), t\in [0, T])$ satisfying  $ \sup_{N} \sup_{t \in [0,T]} \vert u_N(t) \vert_{L^{2}} < \infty $. Moreover, for $ \frac74<\alpha< 2$ and $\delta\in (1-\frac{\alpha}{2}, \frac{2\alpha-3}{2})$, for almost all $\omega$, the map $ u_N: [0, T]\del{\times \Omega} \rightarrow C^{\delta}(0,1)$ is H\"older continuous of index $(\frac{\alpha-1-2\delta}{2\alpha})-$ and satisfies Est.\eqref{Eq-Assum-4-1}, with $   V:=C^{\delta}(0,1)$, i.e. for all most all $\omega\in \Omega,$
\begin{equation}\label{Eq-Assum-4-1-u-N}
\sup_N\sup_{t\in[0, T]}|u_N(t, \omega)|_{C^\delta}<\infty.
\end{equation}
\end{theorem}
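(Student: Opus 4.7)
The argument naturally splits into four stages; the third is where the genuine difficulty sits.

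\textbf{Stage 1 (Finite dimensional existence and uniqueness).} For each fixed $N$, equation \eqref{Discr-Galerkin-FSBE-Evol-1} lives on the $N$-dimensional subspace $H_N=\mathrm{span}\{e_1,\dots,e_N\}$, on which all norms are equivalent and $P_N F$ is locally Lipschitz by Lemma \ref{main-nonlinear}. Since the noise is additive and $\mathcal{F}_t$-adapted, the standard finite-dimensional SDE theory yields a unique local strong solution $u_N$ up to an explosion time $\tau_N$.

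\textbf{Stage 2 (Uniform $L^2$ bound and non-explosion).} Split $u_N=v_N+\mathcal{W}_N$; pathwise, $v_N$ solves the random PDE $\partial_t v_N=-A^{\alpha/2}v_N+P_N F(v_N+\mathcal{W}_N)$ with $v_N(0)=P_Nu_0$. Testing in $L^2$ and using the self-adjointness of $P_N$ gives the energy identity $\tfrac12\tfrac{d}{dt}|v_N|_{L^2}^2+|v_N|_{H^{\alpha/2}_2}^2=\langle F(v_N+\mathcal{W}_N),v_N\rangle$. The structural hypothesis \eqref{con-F-supp} together with Young's inequality absorbs the $H^{\alpha/2}_2$ factor into the left-hand side and leaves a differential inequality of the form $\tfrac{d}{dt}|v_N|_{L^2}^2\le C(\omega)(1+|v_N|_{L^2}^2)$, where $C(\omega)$ is a polynomial in $\sup_{t\le T}|\mathcal{W}(t)|_{C^\delta}$, a.s.\ finite by the analysis of Section \ref{sec-Est-Stoch-Terms}. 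Grönwall yields the uniform bound $\sup_{N,t}|v_N(t)|_{L^2}^2+\int_0^T|v_N(s)|_{H^{\alpha/2}_2}^2\,ds<\infty$, and equivalence of norms on the finite-dimensional $H_N$ rules out blow-up, hence $\tau_N=\infty$.

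\textbf{Stage 3 (Uniform $C^\delta$ bound; the main obstacle).} From the mild identity, the contribution of the initial datum is controlled by $C|u_0|_{H^{\beta}_2}$ through the non-singular estimate \eqref{aide-2-1-Bis} of Lemma \ref{Prop-sg-1} applied under Assumption $\mathcal{A}$ (giving $\beta>\delta+\tfrac12$), and the stochastic convolution $\mathcal{W}_N$ is pathwise bounded in $C^\delta$ uniformly in $N$ by Section \ref{sec-Est-Stoch-Terms}. For the deterministic convolution, Corollary \ref{Coro-nolinearterm+1} delivers, for $\alpha>3/2$, the two ingredients $\|e^{-A^{\alpha/2}s}\|_{\mathcal{L}(H^{-\alpha/2}_2,C^\delta)}\le C s^{-\eta}$ with $\eta<1$ and $|F(u)|_{H^{-\alpha/2}_2}\le C_R(1+|u|_{C^\delta})$. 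A direct singular Grönwall closure is circular, however, because the local Lipschitz constant $C_R$ depends on the very quantity $\sup_s|u_N(s)|_{C^\delta}$ one is trying to establish. The remedy is a bootstrap on the spatial regularity of $v_N$: starting from $v_N\in L^\infty_t L^2_x\cap L^2_t H^{\alpha/2}_2$ (Stage 2), the polynomial form of $f$ together with the algebra/product estimates in Lemma \ref{lem-ess-general} controls $|f(v_N+\mathcal{W}_N)|_{H^{\sigma}_2}$ by powers of $|v_N|_{H^{\sigma'}_2}$, and Lemma \ref{Prop-sg-2} used on the mild formula lifts this to pointwise control of $|v_N(t)|_{H^{\sigma+\alpha-\epsilon}_2}$. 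Iterating finitely many steps reaches $L^\infty_t H^{\beta'}_2$ with $\beta'>\delta+\tfrac12$; Sobolev embedding then gives \eqref{Eq-Assum-4-1-u-N}. The sharper range $\alpha>7/4$, $\delta<\tfrac{2\alpha-3}{2}$ imposed in the theorem is exactly what is needed for the bootstrap to close without losing integrability.

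\textbf{Stage 4 (Temporal Hölder regularity).} With the uniform $C^\delta$ bound in hand, decompose $u_N(t)-u_N(t_0)$ along the mild formula. The initial-data piece is handled by Corollary \ref{coro-sg-regul}. The short integral over $[t_0,t]$ contributes $|t-t_0|^{1-\eta}$ through \eqref{semi-gr-1-partic-beta=alpha-2} and the uniform $H^{-\alpha/2}_2$-bound on $F(u_N)$ from Stage 3. The difference of semigroup kernels on $[0,t_0]$ is controlled by Lemma \ref{lem-sg-regul}, producing the exponent $\bigl(\tfrac{\alpha-1-2\delta}{2\alpha}\bigr)-$. Finally, the stochastic convolution $\mathcal{W}_N$ inherits the same Hölder exponent from Section \ref{sec-Est-Stoch-Terms}. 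Collecting the four contributions yields the announced Hölder regularity of $u_N$ in time with values in $C^\delta$.
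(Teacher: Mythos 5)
Your Stages 1, 2 and 4 follow essentially the same route as the paper: the splitting $u_N=v_N+\mathcal{W}_N$, the pathwise $L^2$ energy estimate via Cond.\eqref{con-F-supp}, Young and Gr\"onwall (this is the paper's Lemma \ref{lem-deter-diff-eq}), and the term-by-term decomposition of the mild formula for temporal H\"older continuity using Corollary \ref{coro-sg-regul}, Lemma \ref{lem-sg-regul} and Lemma \ref{auxiliary-3}. Stage 3 is where you diverge, and you correctly identify the circularity that defeats a naive singular Gr\"onwall closure. The paper, however, does not run a Sobolev-scale bootstrap: it passes through Lebesgue spaces in exactly two steps tailored to the quadratic nonlinearity (Section \ref{sec-deter-Burgers-Eq}). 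From $\sup_{N,t}|v^N(t)|_{L^2}<\infty$ one has $(v^N)^2\in L^1$, and Lemma \ref{Lemma-DeBre} combined with the $L^2\to L^4$ smoothing of Lemma \ref{lem-est-sg-l-2-l-4} bounds the convolution in $L^4$ (this is precisely where $\alpha>\frac74$ enters, through $\frac{1}{4\alpha}+\frac{3}{2\alpha}<1$); then $|F(v)|_{H^{-1}_2}\lesssim |v|_{L^4}^2$ and Lemma \ref{Prop-sg-1} give the uniform $C^\delta$ bound (this is where $\delta<\frac{2\alpha-3}{2}$ enters, through $\frac{3+2\delta}{2\alpha}<1$). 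So the two thresholds you attribute to "closing the bootstrap" actually arise from these two explicit time-singularity integrability conditions.

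There is a concrete soft spot in your Stage 3 as written. The first step of your iteration starts from $v_N\in L^\infty_tL^2_x$ and claims to control $|f(v_N+\mathcal{W}_N)|_{H^{\sigma}_2}$ via "the algebra/product estimates in Lemma \ref{lem-ess-general}". But $L^2=H^0_2$ is not a multiplication algebra (the algebra property in Lemma \ref{lem-ess-general} requires smoothness index exceeding $\frac12$, and Corollary \ref{coro-aide-theorem1} requires one factor in a H\"older space), and for $v\in L^2$ one only gets $v^2\in L^1$, which none of the paper's product lemmas place in a usable $H^\sigma_2$. To launch the iteration you would have to exploit the $L^2_tH^{\alpha/2}_2$ component of the energy bound, at the cost of only square integrability in time for $|f(v_N+\mathcal{W}_N)|_{H^{\alpha/2}_2}$, and then verify that the per-step gain actually reaches $H^{\beta'}_2$ with $\beta'>\delta+\frac12$ uniformly in $N$ under the stated hypotheses; this arithmetic is asserted but not carried out, and it is exactly the content your proof is missing. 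Separately, the theorem asserts uniqueness of the $L^2$-mild solution: you delegate this to finite-dimensional SDE theory in Stage 1, which is acceptable for fixed $N$, whereas the paper gives an explicit Gr\"onwall argument in $C^\delta$ using Lemma \ref{main-nonlinear}.
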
	

\del{\begin{theorem}\label{Theorem-disc}
Let $\alpha \in (1, 2]$ and $ \max\{2, \frac{1}{\alpha-1}\}\leq_1 q\leq \infty$. \del{ and let $u_0$ satisfies Assumption $\mathcal{A}$.} Then Eq. \eqref{Discr-Galerkin-FSBE-Evol-1} admits a unique mild solution $u_N:= (u_N(t), t\in [0, T])$ satisfying the Est. \eqref{Eq-reg-theta-mild} with $ V:= C^{\delta}(0,1)$. Moreover, for $ \frac74<\alpha< 2$ and $\delta\in [0, \frac{2\alpha-3}{2}$, for almost all $\omega$, the map $ u_N: [0, T]\del{\times \Omega} \rightarrow C^{\delta}(0,1)$ is H\"older continuous of index $(\frac{\alpha-1-2\delta}{2\alpha})-$ and satisfies   Est. \eqref{Eq-Assum-4-1}.
\end{theorem}}
Our main results are obtained  under the conditions $0<T<\infty$, $ \alpha \in (\frac74, 2)$,  $\delta\in (1-\frac{\alpha}{2}, \frac{2\alpha-3}{2})$ and  that $u_0$ satisfies Assumption $\mathcal{A}$. We have

\begin{theorem}\label{Main-Theorem-Exit-Uniq}
 The fractional stochastic Burgers type Equation \eqref{FSBE-Evol-1} with initial condition $u_0$, admits a unique mild solution $ u: [0, T]\times\Omega \rightarrow C^\delta(0,1)$.
Moreover, almost surely, the paths of $u$ are H\"older continuous of order; $(\frac{\alpha-1-2\delta}{2\alpha})-$.
\end{theorem}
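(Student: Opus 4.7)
The plan is to derive Theorem \ref{Main-Theorem-Exit-Uniq} as an application of the abstract Blömker--Jentzen result (Theorem \ref{Theorem-Blomker-Jentzen}) with the choices $V := C^{\delta}(0,1)$, $U := H^{-\alpha/2}_2(0,1)$, $S(t) := e^{-A^{\alpha/2}t}$, $P_N$ the Galerkin projection, $F$ the Burgers-type nonlinearity from \eqref{Def-F}, and $O_t(\omega) := e^{-A^{\alpha/2}t}u_0(\omega) + \mathcal{W}(t,\omega)$. Once all four assumptions of that theorem are checked, it produces a unique $V$-valued process $u$ solving \eqref{Eq-Mild-Solution} pathwise, together with the polynomial rate of convergence of $u_N$ to $u$. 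Pathwise uniqueness in the sense of Definition \ref{def-uniqueness} then follows from the uniqueness clause of Theorem \ref{Theorem-Blomker-Jentzen}.

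Verification of the hypotheses proceeds as follows. \textbf{Assumption 1}: the bound \eqref{Eq-Assum-1-1} is exactly Corollary \ref{Coro-1}, which under $\delta < \frac{\alpha-1}{2}$ provides $\eta \in (\frac{1+\alpha+2\delta}{2\alpha},1)$; the bound \eqref{Eq-Assum-1-2} is obtained by inserting the identity factorisation $H^{-\alpha/2}_2 \to H^{\alpha/2}_2 \to C^{\delta}$, combining \eqref{semi-gr-2-*} on $\mathcal{L}(H^{-\alpha/2}_2, H^{\alpha/2}_2)$ (which supplies $t^{-\eta}$) with the fourth item of Lemma \ref{Lem-Est-1-PN} (which yields the gain $N^{-((\alpha-1)/2 - \delta)-}$), setting $\gamma := \frac{\alpha-1}{2}-\delta-$, positive by hypothesis. \textbf{Assumption 2} is precisely Corollary \ref{Coro-nolinearterm}, valid because $\delta > 1-\alpha/2$. \textbf{Assumption 3} splits into the deterministic and stochastic parts of $O_t$: for $e^{-A^{\alpha/2}t}u_0$ one composes \eqref{Est-1-P-N} from $H^{\beta}_2$ into $H^{(\delta+1/2)+}_2$ (using Assumption $\mathcal{A}$) with the embedding $H^{(\delta+1/2)+}_2 \hookrightarrow C^{\delta}$ and the uniform bound \eqref{aide-2-1-Bis} of the semigroup; for the stochastic convolution $\mathcal{W}(t)$, the pathwise estimate $\sup_N \sup_t N^{\gamma}|(1-P_N)\mathcal{W}(t)|_{C^{\delta}} < \infty$ is what Section \ref{sec-Est-Stoch-Terms} is designed to supply via a Kolmogorov-type tightness argument on the $\alpha$-stable stochastic convolution. \textbf{Assumption 4} is exactly the content of Theorem \ref{Theorem-disc}: the Galerkin SDE \eqref{Discr-Galerkin-FSBE-Evol-1} is well-posed in $L^2$, its mild form matches \eqref{Eq-Assum-4} after inserting the deterministic part of $O_t$ (which commutes with $P_N$), and the uniform $C^{\delta}$-bound \eqref{Eq-Assum-4-1-u-N} has already been proved.

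For the temporal Hölder regularity of exponent $\frac{\alpha-1-2\delta}{2\alpha}-$, decompose $u$ along \eqref{Eq-Mild-Solution} into the three summands. For $e^{-A^{\alpha/2}t}u_0$ this is immediate from Corollary \ref{coro-sg-regul}, since $u_0\in H^{\beta}_2 \hookrightarrow H^{-\alpha/2}_2$. For the deterministic convolution, split the difference $\int_0^t - \int_0^s$ into the interval $[s-h,s]$, controlled directly by \eqref{semi-gr-1-partic-beta=alpha-2} together with the already established uniform $C^{\delta}$-bound on $u$ (hence on $F(u(\cdot))$ in $H^{-\alpha/2}_2$ via Corollary \ref{Coro-linear-growth}), and the interval $[0,s-h]$, handled by inserting $e^{-A^{\alpha/2}(t-s)} - e^{-A^{\alpha/2}(s-r)}e^{-A^{\alpha/2}\cdot}$ and applying Corollary \ref{coro-sg-regul}; an optimisation in $h$ gives the claimed exponent. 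For $\mathcal{W}$, the same Hölder exponent is supplied by Section \ref{sec-Est-Stoch-Terms}. The main obstacle is the pathwise verification of Assumption 3 for the OU process in the Hölder norm $C^{\delta}$, which requires sharp spectral estimates on $e^{-A^{\alpha/2}t}$ together with a Kolmogorov-continuity argument for the tail $\sum_{k>N}\int_0^t e^{-2\lambda_k^{\alpha/2}(t-s)} d\beta_k(s) \cdot e_k$; the constraint $\alpha > 7/4$ and the upper bound $\delta < (2\alpha-3)/2$ enter precisely to make this moment estimate convergent and to keep $\gamma$ positive throughout the whole argument.
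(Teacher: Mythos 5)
Your overall architecture is exactly the paper's: verify Assumptions 1--4 of Theorem \ref{Theorem-Blomker-Jentzen} with $V=C^{\delta}(0,1)$, $U=H^{-\alpha/2}_2(0,1)$, $O_t=e^{-A^{\alpha/2}t}u_0+\mathcal{W}(t)$, and invoke Theorem \ref{Theorem-disc} for Assumption 4. However, your verification of \eqref{Eq-Assum-1-2} contains a step that fails as written. You factor $H^{-\alpha/2}_2\to H^{\alpha/2}_2\to C^{\delta}$ and invoke \eqref{semi-gr-2-*} on $\mathcal{L}(H^{-\alpha/2}_2,H^{\alpha/2}_2)$; but there $\gamma-\beta=\alpha$, which violates the hypothesis $0<\gamma-\beta<\alpha$ of Lemma \ref{Prop-sg-2}, and the admissible range of time exponents $\eta\in(\frac{\gamma-\beta}{\alpha},1)=(1,1)$ is empty. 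Morally, smoothing by a full $\alpha$ derivatives costs $t^{-1}$ or worse, and Assumption 1 explicitly requires the time exponent to lie in $[0,1)$; so even if you perturbed the intermediate space down to $H^{(\alpha/2)-\epsilon}_2$ you would then lose the hypothesis of the fourth item of Lemma \ref{Lem-Est-1-PN}, which needs its input in $H^{\alpha/2}_2$. The paper avoids this by commuting $1-P_N$ past the semigroup and routing through a \emph{negative}-order space: $\Vert 1-P_N\Vert_{\mathcal{L}(H^{-\alpha/2}_2,H^{-\beta}_2)}\le CN^{-(\beta-\alpha/2)}$ followed by $\Vert e^{-A^{\alpha/2}t}\Vert_{\mathcal{L}(H^{-\beta}_2,C^{\delta})}\le Ct^{-\eta''}$ with $\beta=(\alpha-\delta-\tfrac12)-$, which yields the same $N$-gain $(\frac{\alpha-1}{2}-\delta)-$ while keeping $\eta''=\frac{1+2\delta+2\beta}{2\alpha}+<1$; your argument needs to be replaced by this (or an equivalent) splitting.

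Two smaller points. First, you attribute the constraints $\alpha>\frac74$ and $\delta<\frac{2\alpha-3}{2}$ to the moment estimates for the Ornstein--Uhlenbeck tail; in the paper these constraints come instead from the uniform boundedness of the Galerkin drift term (the $L^2\to L^4\to C^{\delta}$ bootstrap of Section \ref{sec-deter-Burgers-Eq} feeding into Theorem \ref{Theorem-disc}, i.e.\ Assumption 4), while the OU estimates only need $\delta<\frac{\alpha-1}{2}$; also $\mathcal{W}$ is a Gaussian convolution, not $\alpha$-stable. Second, for the temporal H\"older regularity you redo the three-term decomposition of \eqref{Eq-Mild-Solution} directly for $u$; the paper establishes this for $u_N$ inside the proof of Theorem \ref{Theorem-disc} with constants uniform in $N$ and lets the uniform convergence transfer it to the limit. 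Your direct route is legitimate and arguably cleaner, provided you use the a.s.\ bound $\sup_t|u(t)|_{C^{\delta}}<\infty$ together with Corollary \ref{Coro-linear-growth} to control $F(u(\cdot))$ in $H^{-\alpha/2}_2$, as you indicate.
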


\begin{theorem}\label{main-result-Galerkin-approx}
There exists a $ \mathcal{F}/\mathcal{B}([0,\infty))$-measurable mapping $C: \Omega \rightarrow \mathbb{R}_+^*$, such that almost surely,
\begin{equation}\label{main-est-Galerkin-approx}
\sup_{t \in [0,T]} |u(t) - u_{N}(t) |_{C^{\delta}} \leq C(\omega) \; N^{-(\frac{\alpha-1}{2}-\delta)-},
\end{equation}
where $u$ is the unique solution of Eq.\eqref{FSBE-Evol-1} with initial condition $u_0$ and $ u_N$ is the Galerkin approximation solution of Eq.\eqref{Discr-Galerkin-FSBE-Evol-1}.
\end{theorem}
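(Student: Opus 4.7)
The plan is to apply the abstract framework of Theorem \ref{Theorem-Blomker-Jentzen} with the specific choices $V := C^{\delta}(0,1)$, $U := H^{-\alpha/2}_{2}(0,1)$, rate exponent $\gamma := (\tfrac{\alpha-1}{2}-\delta)-$, and temporal singularity exponent coming from Corollary \ref{Coro-1}, namely some $\alpha_{0} \in (\tfrac{1+2\delta+\alpha}{2\alpha},1)$ (renamed to avoid clashing with the dissipation index). The stochastic process $O_{t}$ is the Ornstein--Uhlenbeck process $\mathcal{W}(t)$ defined in \eqref{OU-process}, and the approximating sequence $X^{N}$ is the Galerkin solution $u_{N}$ from Theorem \ref{Theorem-disc}. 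Once the four assumptions of Theorem \ref{Theorem-Blomker-Jentzen} are verified for these data, the conclusion \eqref{main-est-Galerkin-approx} follows immediately, with $u$ being the mild solution produced by the abstract existence statement; its uniqueness and H\"older regularity are already furnished by Theorem \ref{Main-Theorem-Exit-Uniq}.

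First, I would verify \textbf{Assumption 1}. The estimate \eqref{Eq-Assum-1-1} is exactly Corollary \ref{Coro-1} applied to the present $\alpha,\delta$; note that the hypothesis $\delta < \tfrac{\alpha-1}{2}$ is guaranteed by our range $\delta \in (1-\tfrac{\alpha}{2}, \tfrac{2\alpha-3}{2})$ together with $\alpha \in (\tfrac{7}{4},2)$. For \eqref{Eq-Assum-1-2}, I would factor
\begin{equation*}
(I-P_{N})e^{-A^{\alpha/2}t} = e^{-A^{\alpha/2}t/2}\,(I-P_{N})\,e^{-A^{\alpha/2}t/2},
\end{equation*}
use Est.\ \eqref{aide-2-1-Bis} of Lemma \ref{Prop-sg-1} to map $H^{\alpha/2}_{2}\to C^{\delta}$ (valid since $\delta<\tfrac{\alpha-1}{2}<\tfrac{\alpha}{2}-\tfrac{1}{2}$), then the projection estimate \eqref{Est-1-P-N} to pick up a factor $N^{-(\alpha/2-(\delta+1/2)-)} = N^{-(\frac{\alpha-1}{2}-\delta)+}$, and finally Lemma \ref{Prop-sg-2} with $(\beta,\gamma)=(-\alpha/2,\alpha/2)$ to produce the $t^{-\alpha_{0}}$ singularity from the remaining semigroup factor. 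The same manipulation actually appears in the fourth statement of Lemma \ref{Lem-Est-1-PN}; I would simply combine it with Corollary \ref{Coro-1} to get exactly the form \eqref{Eq-Assum-1-2}.

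\textbf{Assumption 2} is the content of Lemma \ref{main-nonlinear} (equivalently Corollary \ref{Coro-nolinearterm}). \textbf{Assumption 3} asks that $\mathcal{W}$ has a.s.\ continuous paths in $C^{\delta}$ and that $\sup_{N,t}\bigl(N^{\gamma}|(I-P_{N})\mathcal{W}(t)|_{C^{\delta}}\bigr)$ is almost-surely finite with the same $\gamma = (\tfrac{\alpha-1}{2}-\delta)-$; these two statements are established in Section \ref{sec-Est-Stoch-Terms} of the paper devoted to the OU process, and I would simply invoke them. \textbf{Assumption 4} is handled by Theorem \ref{Theorem-disc}: \eqref{Eq-Assum-4-1} is exactly \eqref{Eq-Assum-4-1-u-N}, and the mild formulation \eqref{Eq-Assum-4} for $u_{N}$ with $F$ replaced by $P_{N}F$ follows by standard variation-of-constants applied to \eqref{Discr-Galerkin-FSBE-Evol-1}, using the commutation of $P_{N}$ with $e^{-A^{\alpha/2}t}$ stated in Lemma \ref{Lem-Est-1-PN}.

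The main obstacle is bookkeeping around the exponent $\gamma$: the three places it appears (Assumptions 1, 3, and the conclusion) must all carry the \emph{same} value, and I must verify that the loss from the ``minus sign'' in $(\tfrac{\alpha-1}{2}-\delta)-$ is absorbed consistently across the semigroup--projection estimate and the noise--projection estimate. Once this is reconciled, Theorem \ref{Theorem-Blomker-Jentzen} supplies an $\mathcal{F}$-measurable random constant $C(\omega)$ such that $\sup_{t\in(0,T]}|u(t)-u_{N}(t)|_{C^{\delta}}\leq C(\omega) N^{-\gamma}$; extending the supremum to $t=0$ is trivial since $|u(0)-u_{N}(0)|_{C^\delta} = |(I-P_{N})u_{0}|_{C^{\delta}}$ and Assumption $\mathcal{A}$ combined with Lemma \ref{Lem-Est-1-PN} shows that this term is itself $O(N^{-\gamma})$ a.s., which yields \eqref{main-est-Galerkin-approx}.
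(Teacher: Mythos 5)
Your overall strategy coincides with the paper's: both proofs consist of verifying Assumptions 1--4 of Theorem \ref{Theorem-Blomker-Jentzen} with $V=C^{\delta}(0,1)$, $U=H^{-\alpha/2}_2(0,1)$, $S(t)=e^{-A^{\alpha/2}t}$, and $X^N=u_N$ from Theorem \ref{Theorem-disc}. However, there are two concrete problems in your verification.

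First, your choice $O_t=\mathcal{W}(t)$ is wrong: the abstract theorem produces the solution of $X_t=\int_0^tS(t-s)F(X_s)\,ds+O_t$, so with $O_t=\mathcal{W}(t)$ you would solve Eq.\eqref{FSBE-Evol-1} with \emph{zero} initial datum, and, worse, the Galerkin solution $u_N$ would not satisfy the required representation \eqref{Eq-Assum-4}, since \eqref{Discr-Galerkin-FSBE-Evol-1-Integ} contains the extra term $e^{-A^{\alpha/2}t}P_Nu_0$ which is not of the form $P_N\mathcal{W}(t)$. The paper takes $O_t:=e^{-A^{\alpha/2}t}u_0+\mathcal{W}(t)$ and must then additionally verify \eqref{Eq-Assum-3} for the deterministic part, i.e. $\sup_{N,t}N^{(\frac{\alpha-1}{2}-\delta)-}|(1-P_N)e^{-A^{\alpha/2}t}u_0|_{C^\delta}<\infty$, which it gets from Assumption $\mathcal{A}$, the embedding $H^\beta_2\hookrightarrow H^{\alpha/2}_2$ and the fourth statement of Lemma \ref{Lem-Est-1-PN} (Est.\eqref{u-oassump-3}). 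Your closing remark about $t=0$ does not substitute for this: the term $e^{-A^{\alpha/2}t}(1-P_N)u_0$ must be controlled at rate $N^{-\gamma}$ uniformly for \emph{all} $t\in(0,T]$, not just at $t=0$. The fix is entirely within your toolbox, but as written Assumption 4 fails.

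Second, in your verification of \eqref{Eq-Assum-1-2} the invocation of Lemma \ref{Prop-sg-2} with $(\beta,\gamma)=(-\alpha/2,\alpha/2)$ is out of range: the lemma requires $0<\gamma-\beta<\alpha$, whereas here $\gamma-\beta=\alpha$, and the admissible interval $(\frac{\gamma-\beta}{\alpha},1)$ for the singularity exponent is empty; gaining the full $\alpha$ derivatives costs $t^{-1}$, which violates the requirement $\alpha_0\in[0,1)$ in Assumption 1. You must stop at $H^{\alpha/2-\epsilon}_2$, losing an $\epsilon$ in the $N$-rate (harmless, given the ``$-$'' in the exponent) and producing a singularity $t^{-(1-\epsilon/\alpha)-}$. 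The paper organizes this differently and more cleanly: it does not split the semigroup, but introduces an auxiliary $\beta\in(\frac{\alpha}{2},\alpha-\delta-\frac12)$, writes $\Vert(1-P_N)e^{-A^{\alpha/2}t}\Vert_{\mathcal{L}(H^{-\alpha/2}_2,C^\delta)}\le\Vert e^{-A^{\alpha/2}t}\Vert_{\mathcal{L}(H^{-\beta}_2,C^\delta)}\Vert 1-P_N\Vert_{\mathcal{L}(H^{-\alpha/2}_2,H^{-\beta}_2)}$, and lets the projection act between negative-order Sobolev spaces, yielding $t^{-\eta''}N^{-(\beta-\alpha/2)}$ with $\eta''<1$ and $\beta-\frac{\alpha}{2}\to\frac{\alpha-1}{2}-\delta$. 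Your route is salvageable but requires the $\epsilon$-adjustment you only gestured at.
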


\begin{theorem}\label{main-result-fully}
There exists a $ \mathcal{F}/\mathcal{B}([0,\infty))$-measurable mapping $C: \Omega \rightarrow \mathbb{R}_+^*$, such that almost surely,
\begin{equation}
\sup_{t_{m} \in [0,T]} | u(t_{m}) - u_{N}^{m} |_{C^\delta} \leq C(w) \; \left( (\Delta t)^{(\frac{\alpha -1-2\delta}{2\alpha})-}+  N^{-(\frac{\alpha -1}{2}-\delta)+}\right),
\end{equation}
for every $ N, M \geq 1.$\del{ where $ \xi = (\frac{\alpha -1}{2} - \delta )-$.}  
\end{theorem}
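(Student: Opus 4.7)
The plan is to combine the already established Galerkin bound with a separate bound on the temporal discretization error. By the triangle inequality,
\[
|u(t_m)-u_{N,M}^m|_{C^\delta} \leq |u(t_m)-u_N(t_m)|_{C^\delta} + |u_N(t_m)-u_{N,M}^m|_{C^\delta},
\]
and Theorem \ref{main-result-Galerkin-approx} controls the first summand by $C(\omega)N^{-(\frac{\alpha-1}{2}-\delta)+}$. The entire work is therefore to show that the discretization error $\epsilon_m:=|u_N(t_m)-u_{N,M}^m|_{C^\delta}$ is almost surely bounded by $C(\omega)(\Delta t)^{(\frac{\alpha-1-2\delta}{2\alpha})-}$, uniformly in $m$ and $N$.

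Using the mild form of $u_N$ and the representation \eqref{eq-u-N-M-m} (note that the stochastic integrals cancel), I would write
\[
u_N(t_m)-u_{N,M}^m = I_m^{(1)} + I_m^{(2)} + I_m^{(3)},
\]
where, with $F_N:=P_N F$,
\begin{align*}
I_m^{(1)} &= \sum_{k=0}^{m-1}\int_{t_k}^{t_{k+1}}\bigl[e^{-A^{\alpha/2}(t_m-s)}-e^{-A^{\alpha/2}(t_m-t_k)}\bigr]F_N(u_N(s))\,ds,\\
I_m^{(2)} &= \sum_{k=0}^{m-1}\int_{t_k}^{t_{k+1}} e^{-A^{\alpha/2}(t_m-t_k)}\bigl[F_N(u_N(s))-F_N(u_N(t_k))\bigr]\,ds,\\
I_m^{(3)} &= \sum_{k=0}^{m-1}\int_{t_k}^{t_{k+1}} e^{-A^{\alpha/2}(t_m-t_k)}\bigl[F_N(u_N(t_k))-F_N(u_{N,M}^k)\bigr]\,ds.
\end{align*}
For $I_m^{(1)}$, I would apply Lemma \ref{lem-sg-regul} to bound the semigroup increment in $\mathcal{L}(H^{-\alpha/2}_2,C^\delta)$ by $C(t_m-t_{k+1})^{-\gamma}(\Delta t)^{\eta}$ with $\gamma=1-$ and $\eta=(\frac{\alpha-1-2\delta}{2\alpha})-$, while Corollary \ref{Coro-linear-growth} together with the uniform bound $\sup_{t,N}|u_N(t)|_{C^\delta}\leq C(\omega)$ (from Theorem \ref{Theorem-disc}) controls $|F_N(u_N(s))|_{H^{-\alpha/2}_2}$; the near-diagonal term $k=m-1$ is handled separately by writing the increment as a sum of two semigroup terms and using Estimate \eqref{semi-gr-1-partic-beta=alpha-2}. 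For $I_m^{(2)}$, I combine the semigroup bound of Corollary \ref{Coro-1}, the Lipschitz bound \eqref{main-inq-nolin} on $F$, and the temporal H\"older regularity $|u_N(s)-u_N(t_k)|_{C^\delta}\leq C(\omega)(s-t_k)^{(\frac{\alpha-1-2\delta}{2\alpha})-}$ provided by Theorem \ref{Theorem-disc}. For $I_m^{(3)}$, since $\sup_{m,N,M}|u_{N,M}^m|_{C^\delta}<\infty$ a.s. by Lemma \ref{u-N-M-bounded}, the local Lipschitz estimate gives $|F_N(u_N(t_k))-F_N(u_{N,M}^k)|_{H^{-\alpha/2}_2}\leq C(\omega)\epsilon_k$, and Corollary \ref{Coro-1} then yields
\[
|I_m^{(3)}|_{C^\delta}\leq C(\omega)\,\Delta t\sum_{k=0}^{m-1}(t_m-t_k)^{-\eta_0}\epsilon_k
\]
for some $\eta_0\in(\frac{1+2\delta+\alpha}{2\alpha},1)$.

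Putting the three bounds together gives the discrete convolution inequality
\[
\epsilon_m \leq C(\omega)(\Delta t)^{(\frac{\alpha-1-2\delta}{2\alpha})-} + C(\omega)\,\Delta t\sum_{k=0}^{m-1}(t_m-t_k)^{-\eta_0}\epsilon_k,\qquad 1\leq m\leq M.
\]
A weakly singular discrete Gronwall lemma (of Henry/Elliott–Stuart type) then produces $\epsilon_m\leq C(\omega)(\Delta t)^{(\frac{\alpha-1-2\delta}{2\alpha})-}$ uniformly in $m,N,M$. Combining with the Galerkin bound gives the asserted estimate.

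The main obstacle is the non-integrable singularity in the convolution kernel, which arises because $\eta_0>1-\frac{1}{2\alpha}$ is close to $1$ and forbids a direct discrete Gronwall iteration. I would resolve this by a standard two-step argument: first iterate the inequality once to regularize the kernel into an integrable one, then apply the classical discrete Gronwall. Equally delicate is ensuring that all the constants produced through Lemmas \ref{Prop-sg-1}, \ref{lem-sg-regul}, \ref{main-nonlinear}, and \ref{u-N-M-bounded} depend only on the a.s.\ finite random constants $C(\omega)$, so that measurability of the final multiplier is preserved; this follows since each ingredient involves only $|u_N|_{C^\delta}$ and $|u_{N,M}^m|_{C^\delta}$, both of which are $\mathcal{F}$-measurable and almost surely finite.
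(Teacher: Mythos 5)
Your proposal follows essentially the same route as the paper's proof: the same triangle inequality against the Galerkin approximation, the same three-term splitting of $u_N(t_m)-u_{N,M}^m$ (your $I_m^{(1)},I_m^{(2)},I_m^{(3)}$ are the paper's $J_2,J_1,J_3$), the same lemmas for each piece, and a discrete Gronwall argument to close. The only divergence is at the last step, and it is minor: since $\eta_0\in(\frac{1+2\delta+\alpha}{2\alpha},1)$ the singularity of the kernel is in fact integrable (so "non-integrable" is a mislabel), and the paper simply invokes its discrete Gronwall lemma using $\sum_{k}\int_{t_k}^{t_{k+1}}(t_m-s)^{-\eta_0}\,ds\leq \frac{T^{1-\eta_0}}{1-\eta_0}$, whereas your one-step kernel iteration is a harmless --- and, given that the coefficients depend on the terminal index $m$, arguably more scrupulous --- way of reaching the same uniform bound.
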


\section{Some Estimates  for the stochastic terms}\label{sec-Est-Stoch-Terms}
This section is mainly devoted to study the rate, the different kinds of  convergence, in particular the pathwise convergence, and the regularities of the Galerkin approximation of the stochastic terms;  
$W_N$ given by \eqref{W-N-series} and
\begin{equation}\label{OU-process-W-N}
\mathcal{W}_N(t):= P_N\mathcal{W}(t)= \int_0^te^{-A^{\alpha/2} (t-s)}W_N(ds)= \sum_{k=1}^{N}\int_0^te^{-\lambda^{\alpha/2}_k (t-s)}d\beta_{k}(t)e_{k},
\end{equation}
where  $\mathcal{W}$ is the Ornstein-Uhlenbeck stochastic process given by \eqref{OU-process} and  $W$ is the Wiener process given by \eqref{W-series}. The Ornstein-Uhlenbeck stochastic process $\mathcal{W}$ has been studied, for example, in \cite{BrzezniakDebbi1, DebbiL2Solution, DebbiDozzi1}, where the authors proved that for $ \alpha \in (1, 2]$ and $ G=I$, the Ortein Uhlenbeck process \eqref{OU-process} is well defined as an $L^2-$valued  stochastic process with $C^{\frac{\alpha-1}{2\alpha}}_tC^{\frac{\alpha-1}{2}}_x-$H\"older continuous trajectories.  In \cite{Debbi-scalar-active}, the author proved that for $ \alpha \in (1, 2]$, $ \mathcal{W}\in L^p(\Omega; C_t\times H_{q}^\beta)(C(0, 1))$, $ 2\leq q<\infty$, $ \beta\geq 0$, $ p\geq 2$ and $ C(0, 1)$ is the unit circle. The following Lemma is a generalization of \cite[Proposition 4.2.]{Blomker-Jentzen},

\begin{lem}\label{lem-est-stoch-main}
Let $ \alpha \in (1, 2]$, $0<\beta<\frac{\alpha-1}2$, $q\geq 2$ and let $p_0>\frac{2\alpha}{\alpha-1-2\beta}$ be fixed. Then for \del{ $\tau \in (0, \frac{\alpha-1}{2}-(\beta+\frac \alpha p!!)$ and} $ p\geq1$, there exist $C_{\alpha,\beta, q, p}>0$, s.t.
\begin{equation}\label{Eq-est-E-W-N}
\sup_{N \in \mathbb{N}}\mathbb{E} \Big[ |\mathcal{W}_{N} |^p _{C_tH^{\beta}_q}  +  N^{{p(\frac{\alpha-1}{2}-(\beta+\frac \alpha {p_0}))-}}|(1-P_N) \mathcal{W} |^p _{C_tH^{\beta}_q}+  | \mathcal{W} |^p _{C_tH^{\beta}_q}\Big]< C_{\alpha,\beta, q, p}.
\end{equation}
\end{lem}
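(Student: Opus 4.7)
The strategy is the factorization method of Da Prato--Kwapień--Zabczyk combined with Gaussian hypercontractivity, applied separately to $\mathcal{W}$, $\mathcal{W}_N$, and $(1-P_N)\mathcal{W}$, all of which are Gaussian processes in $H^\beta_q$. Because every moment of a Banach-space-valued Gaussian random variable is controlled by its second moment (Fernique/Kahane--Khintchine), it suffices to carry out the analysis at the level of $p=2$ together with an $L^{p_0}(0,T)$-in-time control.

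\textbf{Step 1: Factorization representation.} For any $\theta\in(1/p_0,1)$, write, by the stochastic Fubini theorem,
\begin{equation*}
\mathcal{W}(t)=\frac{\sin(\pi\theta)}{\pi}\int_0^t(t-s)^{\theta-1}e^{-(t-s)A^{\alpha/2}}Y_\theta(s)\,ds,\qquad Y_\theta(s):=\int_0^s(s-r)^{-\theta}e^{-(s-r)A^{\alpha/2}}\,dW(r).
\end{equation*}
Hölder's inequality in $s$ with the pair $(p_0,p_0/(p_0-1))$ yields, using $\theta>1/p_0$,
\begin{equation*}
\sup_{t\in[0,T]}|\mathcal{W}(t)|_{H^\beta_q}\le C_{T,\theta,p_0}\Big(\int_0^T|Y_\theta(s)|_{H^\beta_q}^{p_0}\,ds\Big)^{1/p_0},
\end{equation*}
and the same identity holds with $\mathcal{W}$ replaced by $\mathcal{W}_N$ or $(1-P_N)\mathcal{W}$ after inserting the corresponding projector (which commutes with $e^{-tA^{\alpha/2}}$ by Lemma \ref{Lem-Est-1-PN}).

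\textbf{Step 2: Pointwise second-moment estimates for $Y_\theta$.} Expanding $Y_\theta$ in the basis $(e_k)$ and using Itô isometry together with the Dirichlet-eigenfunction bound $|e_k|_{L^q}\le c$ and $\lambda_k\asymp k^2$ (Lemma \ref{lem-e-k-holder}), one computes
\begin{equation*}
\mathbb{E}|(1-P_N)Y_\theta(s)|_{H^\beta_q}^{2}\le C\sum_{k>N}k^{2\beta}\int_0^s(s-r)^{-2\theta}e^{-2\lambda_k^{\alpha/2}(s-r)}\,dr\le C\sum_{k>N}k^{2\beta-\alpha(1-2\theta)},
\end{equation*}
where the last step uses Lemma \ref{lem-elementary-1} to trade the exponential decay against the polynomial $\lambda_k^{-\alpha(1-2\theta)/2}$. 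The series converges iff $2\beta-\alpha+2\alpha\theta<-1$, i.e.\ $\theta<\frac{\alpha-1-2\beta}{2\alpha}$, and the tail is bounded by $C\,N^{-(\alpha-1-2\beta-2\alpha\theta)}$. The admissible window $\theta\in(\frac{1}{p_0},\frac{\alpha-1-2\beta}{2\alpha})$ is nonempty exactly under the standing hypothesis $p_0>\frac{2\alpha}{\alpha-1-2\beta}$; taking $\theta=\frac{1}{p_0}+\varepsilon$ produces the decay rate $N^{-(\alpha-1-2\beta-\frac{2\alpha}{p_0})+}$, which, after taking the square root in Step~3 and raising to the $p$-th power, matches the target exponent $p\bigl(\frac{\alpha-1}{2}-\beta-\frac{\alpha}{p_0}\bigr)-$.

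\textbf{Step 3: Passage to $L^p(\Omega)$.} Since $Y_\theta(s)$ is Gaussian in $H^\beta_q$, Fernique's theorem gives $\mathbb{E}|(1-P_N)Y_\theta(s)|_{H^\beta_q}^{p_0}\le C_{p_0}\bigl(\mathbb{E}|(1-P_N)Y_\theta(s)|_{H^\beta_q}^{2}\bigr)^{p_0/2}$. Combining with the Hölder bound of Step~1, Fubini in $(s,\omega)$, and a second application of Gaussian moment equivalence (now at the level of the supremum-in-time, which is a Gaussian functional) yields
\begin{equation*}
\mathbb{E}\Big[\sup_{t\in[0,T]}|(1-P_N)\mathcal{W}(t)|_{H^\beta_q}^{p}\Big]\le C_{\alpha,\beta,q,p}\,N^{-p\bigl(\frac{\alpha-1}{2}-\beta-\frac{\alpha}{p_0}\bigr)+}.
\end{equation*}
Running the argument without the projector gives the bound on $\mathbb{E}|\mathcal{W}|_{C_tH^\beta_q}^{p}$, and then $\mathcal{W}_N=\mathcal{W}-(1-P_N)\mathcal{W}$ together with triangle inequality handles the $\mathcal{W}_N$ term uniformly in $N$.

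\textbf{Main obstacle.} The delicate point is the tight calibration of the two competing constraints on $\theta$: the lower bound $\theta>1/p_0$ is imposed by integrability of the deterministic kernel $(t-s)^{\theta-1}$ in the factorization, while the upper bound $\theta<\frac{\alpha-1-2\beta}{2\alpha}$ comes from convergence of the spectral series for $\mathbb{E}|Y_\theta|^2_{H^\beta_q}$; these two constraints are compatible precisely under the hypothesis on $p_0$, and the rate $N^{-(\frac{\alpha-1}{2}-\beta-\frac{\alpha}{p_0})-}$ is sharp at this boundary, which forces the $``-"$ in the exponent. A subsidiary but routine issue is verifying the uniform bound $|e_k|_{L^q}\le c$ for the sine basis when $q>2$, which is immediate for $q<\infty$ and reduces the case $q=\infty$ to an interpolation argument.
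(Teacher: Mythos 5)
Your proposal is correct and follows essentially the same route as the paper's proof: the factorization representation, a H\"older inequality in time, the It\^o-isometry/spectral computation combined with Lemma \ref{lem-elementary-1} to extract the $N$-decay from the tail of the eigenvalue series, and Gaussian moment equivalence to pass to general $p$. The only cosmetic differences are that you place the $H^{\beta}_q$-smoothing on $Y_\theta$ (a factor $k^{2\beta}$ inside the series) rather than on the outer semigroup factor (an extra singularity $(t-s)^{-\beta/\alpha}$, as in Est.\eqref{Eq-fact-N-1}), and that you decouple the time-H\"older exponent $p_0$ from the moment exponent $p$ from the outset instead of in a second step; both choices yield the identical admissibility window and the same rate $N^{-(\frac{\alpha-1}{2}-\beta-\frac{\alpha}{p_0})+}$.
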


\begin{proof}
To prove Lemma \ref{lem-est-stoch-main}, it is sufficient to prove the result for the second term in the LHS of  \eqref{Eq-est-E-W-N}. The remaining estimates can be easily obtained by following a similar, but simple culculus without considering any power of $N$.

The  proof is given in two steps. In the first one, we prove, that for $ \alpha \in (1, 2]$, $0<\beta<\frac{\alpha-1}2$, $q\geq 2$ and $ p>\frac{2\alpha}{\alpha-1-2\beta}$,  there exist $C_{\alpha,\beta, q, p}>0$ and $\xi_p \in (0, \frac{\alpha-1}{2}-(\beta+\frac \alpha p))$, such that the following estimate holds:
\begin{equation}\label{Eq-est-E-W-N-Inter}
\sup_{N \in \mathbb{N}}\mathbb{E} \Big[ N^{\xi_pp\del{p(\frac{\alpha-1}{2}-(\beta+\frac \alpha p))-}}|(1-P_N) \mathcal{W} |^p _{C_tH^{\beta}_q}\Big]< C_{\alpha,\beta, q, p},
\end{equation}
In the second step, we show that Est.\eqref{Eq-est-E-W-N-Inter} is true for all $p\geq 1$ and for a universal $\xi$. 

\vspace{0.25cm}
 
{\bf Step1.}  Recall that 
\begin{equation}
(1-P_N)\mathcal{W}(t)= \int_{0}^{t} e^{A^{\alpha/2}(t-s)}(1-P_N)W(ds).
\end{equation}
Using the factorization method, see e.g. \cite{BrzezniakDebbi1, Daprato}, we represent $ (1-P_N)\mathcal{W}$ as,
\begin{equation}
(1-P_N)\mathcal{W}(t)= \int_{0}^{t} (t-s)^{\nu -1} e^{A^{\alpha/2}(t-s)} Y^{N}(s) ds,
\end{equation}
\begin{equation}\label{Factoz-W-N}
Y^{N}(t)= \int_{0}^{t} (t-s)^{-\nu} e^{A^{\alpha/2}(t-s)}(1-P_N)W(ds),  
\end{equation}
with $\nu \in (0, 1)$.  Thanks to Est.\eqref{eq-semigp-property}\del{Lemma \ref{Lem-semigroup-N}} and by application of H\"older inequality and the fact that we can choose $ \frac1p+\frac\beta\alpha<\nu<1$, we get
\begin{eqnarray}\label{Eq-fact-N-1}
\mathbb{E}| (1-P_N)\mathcal{W} |_{C_tH^{\beta}_q}^p&=& \mathbb{E}\big(\sup_{t\in[0, T]}| A^{\frac\beta2}\int_0^t(t-s)^{\nu -1} e^{A^{\alpha/2}(t-s)} Y^{N}(s) ds|^p_{L^q}\big)\nonumber\\
&\leq & \mathbb{E}\sup_{t\in[0, T]}\big(\int_0^t(t-s)^{\nu -1} |A^{\frac\beta2}e^{A^{\alpha/2}(t-s)} Y^{N}(s)|_{L^q} ds\big)^p\nonumber\\
&\leq & C \mathbb{E}\sup_{t\in[0, T]}\big(\int_0^t(t-s)^{\nu -1-\frac\beta\alpha} |Y^{N}(s)|_{L^q} ds\big)^p\nonumber\\
&\leq & C\sup_{t\in[0, T]}\big(\int_0^t(t-s)^{(\nu -1-\frac\beta\alpha)\frac{p}{p-1}}ds\big)^{(p-1)} \mathbb{E} \int_0^T |Y^{N}(s)|^p_{L^q} ds\nonumber\\
&\leq & C T^{(1+(\nu -1-\frac\beta\alpha)\frac{p}{p-1})(p-1)} \mathbb{E} \int_0^T |Y^{N}(s)|^p_{L^q} ds.
\end{eqnarray}
Moreover, using the stochastic isometry, the estimate $|e_k|_{L^q}\leq 1$ and Lemma \ref{lem-elementary-1},\del{the inequality $ e^{-x}\leq c_\gamma x^{-\gamma}$,} with $ \frac1\alpha<\gamma <1-2\nu$ and $\nu<\frac 12-\frac{1}{2\alpha}$, we obtain
\begin{eqnarray}\label{Eq-fact-N-2}
\mathbb{E} \int_0^T |Y^{N}(s)|^p_{L^q} ds &\leq & \mathbb{E} \int_0^T |\int_{0}^{t} (t-s)^{-\nu} e^{A^{\alpha/2}(t-s)} (1-P_N)W(ds)|^p_{L^q} ds \nonumber\\
&\leq & C\int_0^T (\int_{0}^{t} (t-s)^{-2\nu} \sum_{k=N+1}^\infty e^{-2\lambda_k^\frac\alpha2(t-s)}|e_k|^2_{L^q}ds)^{\frac p2} dt\nonumber\\
&\leq & C\int_0^T (\int_{0}^{t} (t-s)^{-2\nu} \sum_{k=N+1}^\infty e^{-2\lambda_k^\frac\alpha2(t-s)}ds)^{\frac p2} dt \nonumber\\
&\leq & C\int_0^T (\int_{0}^{t} (t-s)^{-2\nu} \sum_{k=N+1}^\infty (2\lambda_k)^\frac{-\gamma \alpha}2(t-s)^{-\gamma}ds)^{\frac p2} dt \nonumber\\
&\leq & C_{\alpha, \gamma}\int_0^T (\int_{0}^{t} (t-s)^{-2\nu-\gamma}ds \sum_{k=N+1}^\infty k^{-\gamma \alpha})^{\frac p2} dt
\end{eqnarray}
Remark that $ \gamma$ exists thanks to the condition $ \nu<\frac 12-\frac{1}{2\alpha}$ and $ \alpha>1$. It is also easy to see that, thanks to the choice of  $ \nu$ and $ \gamma$, the integral in the RHS of the last inequality of Est.\eqref{Eq-fact-N-2} converges. Now, let $\xi\in (0, \alpha\gamma -1)$, then $ \sum_{k=N+1}^\infty k^{-\gamma \alpha} \leq N^{-\xi}\sum_{k=N+1}^\infty k^{-\gamma \alpha+\xi}\leq C_{\alpha, \gamma, \xi} N^{-\xi}$. Hence

\begin{eqnarray}\label{Eq-fact-N-2-aux}
\mathbb{E} \int_0^T |Y^{N}(s)|^p_{L^q} ds &\leq & 
C_{\alpha, \gamma, \beta}N^{-\xi\frac p2}(\int_0^T t^{(1-2\nu-\gamma)\frac p2}dt)( \sum_{k=N+1}^\infty k^{-\gamma \alpha+\xi})^{\frac p2} \nonumber\\
&\leq & 
C_{\alpha, \gamma, \beta}N^{-\xi\frac p2} T^{(1-2\nu-\gamma)\frac p2+1} ( \sum_{k=1}^\infty k^{-\gamma \alpha+\xi})^{\frac p2} 
\nonumber\\
&\leq & C_{\alpha, \gamma, \beta, p, \nu, \xi, T}N^{-\xi\frac p2}.
\end{eqnarray}
Now to combine \eqref{Eq-fact-N-1} and \eqref{Eq-fact-N-2-aux}, we have to assume that $ \frac1p + \frac\beta\alpha< \nu<\frac 12-\frac{1}{2\alpha}$. The existence of $\nu$ is guaranted thanks to the conditions  $0<\beta<\frac{\alpha-1}2$ and $ p>\frac{2\alpha}{\alpha-1-2\beta}$. \del{Now, by combining \eqref{Eq-fact-N-1} and \eqref{Eq-fact-N-2}, we deduce that the first term in the LHS of \eqref{Eq-est-E-W-N} is finite.} Therefore,
\begin{equation}
\mathbb{E} |(1-P_N)\mathcal{W} |^p_{C_tH^{\beta}_q}
\leq C_{\alpha, \gamma, \beta, p, \nu, \xi, T}N^{-p\frac \xi2}.
\end{equation}
According to the values of the parameters $\xi$, $ \gamma$  and $\nu$, we deduce that $\frac \xi2\in (0, \frac{\alpha-1}{2}-(\beta+\frac \alpha p))$, hence $\xi$ depends, in particular, on $p$, so  let us denote $\frac \xi2$ by $ \xi_p$.
\del{the maximum value possible for $\xi$ is $\frac{\alpha-1}{2}-(\beta+\frac \alpha p)$.}

\vspace{0.25cm}

\noindent {\bf Step2.} 
Let us fix $ p_0> \frac{2\alpha}{\alpha-1-2\beta}$, using H\"older inequality, we deduce Est.\eqref{Eq-est-E-W-N-Inter}, for all $p\leq p_0$, with $\xi_{p}$ being replaced by $\xi_{p_0}$ and the constant depends also on $p_0$ and $p$. Now, for $ p\geq p_0$, then Est.\eqref{Eq-est-E-W-N-Inter} holds and thanks to the inclusion  $ (0, \frac{\alpha-1}{2}-(\beta+\frac \alpha p_0))\subset (0, \frac{\alpha-1}{2}-(\beta+\frac \alpha p))$, it is sufficient to take $\xi_p$  equals to    $ \xi_{p_0}$. 
\end{proof}

\begin{coro}\label{Coro-lem-1}
Let $ \alpha \in (1, 2]$, $0<\delta<\frac{\alpha-1}{2}$ and  $p_0>\frac{2(\alpha+1)}{\alpha-1-2\delta}$. Then for $p\geq 1$, there exists $C_{\alpha,\delta, p}>0$, s.t.
\begin{equation}\label{Eq-est-E-W-N-C-delta-1}
\sup_{N \in \mathbb{N}}\mathbb{E} \Big[ | \mathcal{W}_{N} |^p _{C_tC^\delta}  +  N^{{p(\frac{\alpha-1}{2}-(\delta+\frac{ \alpha +1}{p_0}))-}}|(1-P_N) \mathcal{W} |^p _{C_tC^\delta}+  | \mathcal{W} |^p _{C_tC^\delta}\Big]< C_{\alpha,\delta, p},
\end{equation}
\end{coro}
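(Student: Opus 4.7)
The plan is to derive the corollary directly from Lemma \ref{lem-est-stoch-main} by composing its $H^{\beta}_q$-estimates with the Sobolev embedding $H^{\beta}_q(0,1) \hookrightarrow C^{\delta}(0,1)$, which holds whenever $\beta - 1/q > \delta$. Since Lemma \ref{lem-est-stoch-main} leaves the pair $(\beta, q)$ free (subject to $0 < \beta < (\alpha-1)/2$ and $q \geq 2$), the only task is to pick a pair that both activates the embedding and produces the rate on $N$ stated in the corollary.

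First I would set $q = p_0$ and $\beta = \delta + \frac{1}{p_0} + \varepsilon$ for a small $\varepsilon > 0$, which immediately gives the embedding. The admissibility condition $\beta < \frac{\alpha-1}{2}$ then reads $\delta + \frac{1}{p_0} + \varepsilon < \frac{\alpha-1}{2}$, and the hypothesis $p_0 > \frac{2(\alpha+1)}{\alpha-1-2\delta}$ leaves ample room to choose such an $\varepsilon$. With this choice, the exponent of $N$ delivered by Lemma \ref{lem-est-stoch-main} equals
\begin{equation*}
p\Big(\tfrac{\alpha-1}{2}-\big(\beta+\tfrac{\alpha}{p_0}\big)\Big)- \;=\; p\Big(\tfrac{\alpha-1}{2}-\big(\delta+\tfrac{\alpha+1}{p_0}\big)\Big) - p\varepsilon,
\end{equation*}
which is strictly positive exactly because of the $(\alpha+1)$ factor in the hypothesis on $p_0$; the loss $p\varepsilon$ is absorbed into the trailing "$-$".

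Next I would apply the embedding pathwise: for every $v\in H^{\beta}_{p_0}(0,1)$ one has $|v|_{C^{\delta}} \leq C_{\alpha,\delta,p_0}|v|_{H^{\beta}_{p_0}}$. Applied to $\mathcal{W}_N(t,\omega)$, $(1-P_N)\mathcal{W}(t,\omega)$ and $\mathcal{W}(t,\omega)$, and followed by $\sup_{t\in[0,T]}$, $(\cdot)^p$ and expectation, each summand in the corollary is dominated by its $C_tH^{\beta}_{p_0}$-counterpart, which is in turn uniformly bounded in $N$ by Lemma \ref{lem-est-stoch-main}. Putting these three controls together yields the claimed bound.

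The only non-routine step, and hence the main "obstacle", is the bookkeeping that lines up three simultaneous constraints: the embedding requirement $\beta > \delta + 1/q$, the lemma's admissibility $\beta < (\alpha-1)/2$, and the strict positivity of the resulting rate on $N$. All three collapse into the single quantitative hypothesis $p_0 > 2(\alpha+1)/(\alpha-1-2\delta)$ exactly when $q=p_0$ and $\beta$ is taken slightly above $\delta + 1/p_0$, which explains the precise form of the assumption.
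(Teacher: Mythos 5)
Your proposal is correct and follows essentially the same route as the paper: the paper's proof also specializes Lemma \ref{lem-est-stoch-main} to $q=p_0$ and $\beta=\delta+\frac{1}{p_0}$ and then invokes the Sobolev embedding $H^{\delta+1/p_0}_{p_0}\hookrightarrow C^\delta$ of Theorem \ref{main-sob-embedding}, the hypothesis $p_0>\frac{2(\alpha+1)}{\alpha-1-2\delta}$ being exactly what makes this $\beta$ admissible in the lemma. Your only deviation is taking $\beta=\delta+\frac{1}{p_0}+\varepsilon$ and absorbing the $p\varepsilon$ loss into the trailing ``$-$'', which is harmless (and unnecessary, since the embedding theorem already covers the borderline case $r-\frac1p=\delta$).
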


\begin{proof}
It is easy to see that Est.\eqref{Eq-est-E-W-N} is valid for the special case $ \beta = \delta+\frac 1{p_0}$, with  $\delta < \frac{\alpha-1}{2}$, $ p_0>\frac{2(\alpha+1)}{\alpha-1-2\delta}$  and $q=p_0$. Hence, we get
\begin{equation}\label{Eq-est-E-W-N-p}
\sup_{N \in \mathbb{N}}\mathbb{E} \Big[ | \mathcal{W}_{N} |^p _{C_tH^{\delta+\frac1{p}}_{p}}  +  N^{{p(\frac{\alpha-1}{2}-(\delta+\frac{ \alpha+1}{p_0}))-}\del{\tau p}\del{p(\frac{\alpha-1}{2}-(\beta+\frac \alpha p))-}}|(1-P_N) \mathcal{W} |^p _{C_tH^{\delta+\frac1{p}}_{p}}+  | \mathcal{W} |^p _{C_tH^{\delta+\frac1{p}}_{p}}\Big]< C_{\alpha,\delta, p}.
\end{equation}
Thanks to the embedding $ H^{\delta+\frac1{p_0}}_{p_0} \hookrightarrow C^\delta$, see Theorem \ref{main-sob-embedding},       we get Est.\eqref{Eq-est-E-W-N-C-delta-1}. 
\end{proof}

\begin{coro}\label{Coro-lem-2}
Let $ \alpha \in (1, 2]$ and $\delta \in (0, \frac{\alpha-1}{2})$. Then  there exists  a finite  positive random variable $C_{\alpha,\delta}$, s.t. for almost surely,
\begin{equation}\label{Eq-est-E-1-P-N-C-delta}
\sup_{N \in \mathbb{N}}\big[N^{(\frac{\alpha-1}{2}-\delta)-}|(1-P_N) \mathcal{W}(\omega)|_{C_tC^\delta}\big]\leq C_{\alpha,\delta}(\omega).
\end{equation}
\end{coro}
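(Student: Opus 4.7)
My plan is to upgrade the $L^p$-moment estimate of Corollary \ref{Coro-lem-1} to an almost-sure bound by the standard summation/Chebyshev argument. First I interpret the $-$ notation: fix an arbitrarily small $\eta>0$ and define
\[
Z_\eta(\omega) := \sup_{N \in \mathbb{N}} N^{(\frac{\alpha-1}{2}-\delta)-\eta}\,|(1-P_N)\mathcal{W}(\omega)|_{C_tC^\delta}.
\]
The goal is to show $Z_\eta<\infty$ almost surely, at which point one sets $C_{\alpha,\delta}(\omega):=Z_\eta(\omega)$ (with the implicit understanding that, as $\eta\downarrow 0$, one obtains a family of finite random variables realising the exponent $(\frac{\alpha-1}{2}-\delta)-$).

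Next, I use the crude bound $\sup_{N}\le\sum_{N}$ applied to the $p$-th power, which gives
\[
Z_\eta^p \;\le\; \sum_{N=1}^{\infty} N^{p\left(\frac{\alpha-1}{2}-\delta-\eta\right)}\,|(1-P_N)\mathcal{W}|^p_{C_tC^\delta}.
\]
Taking expectation, exchanging sum and integral by Fubini--Tonelli, and invoking Corollary \ref{Coro-lem-1} with an auxiliary parameter $p_0$ chosen so that simultaneously $p_0>\frac{2(\alpha+1)}{\alpha-1-2\delta}$ and $\frac{\alpha+1}{p_0}<\frac{\eta}{2}$, yield, for any prescribed small $\eta_0>0$,
\[
\mathbb{E}[Z_\eta^p] \;\le\; C_{\alpha,\delta,p,p_0,\eta_0}\sum_{N=1}^{\infty} N^{\,p\left(\frac{\alpha+1}{p_0}-\eta\right)+\eta_0}.
\]

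Since $\frac{\alpha+1}{p_0}-\eta<-\frac{\eta}{2}<0$, fixing $\eta_0$ small and then taking $p$ large enough that $p(\eta-\frac{\alpha+1}{p_0})-\eta_0>1$ renders the series convergent. Hence $\mathbb{E}[Z_\eta^p]<\infty$, which in particular forces $Z_\eta<\infty$ almost surely, as desired. The main (and essentially only) obstacle is the bookkeeping of parameters: $p_0$, $p$, and $\eta_0$ must be chosen in the correct order and large/small enough to absorb both the rate defect $\frac{\alpha+1}{p_0}$ inherent in Corollary \ref{Coro-lem-1} and the $\eta_0$ slack hidden in its $-$ exponent. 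No new probabilistic input is needed beyond the classical passage from moment bounds to pathwise bounds via Borel--Cantelli/Chebyshev.
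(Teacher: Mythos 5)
Your argument is correct and follows essentially the same route as the paper: the paper combines the moment bound of Corollary \ref{Coro-lem-1} (via Lemma \ref{lem-est-stoch-main}) with the cited pathwise-convergence lemma of Kloeden--Neuenkirch (Lemma \ref{appendix-A-4}), then sends $\frac{\alpha+1}{p_0}\to 0$. The only difference is that you reprove that cited lemma inline by the $\sup\le\sum$ plus Fubini plus large-$p$ argument, with the parameter bookkeeping handled correctly.
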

\begin{proof}
Using Lemma \ref{lem-est-stoch-main} and Lemma  \ref{appendix-A-4}, we deduce for a given  $p_0>\frac{2\alpha}{\alpha-1-2\beta}$, that almost surely,
\begin{equation}\label{Eq-Assum-3-proo}
\sup_{N\in \mathbb{N}}\sup_{t\in (0, T]}\big(N^{[\frac{\alpha-1}{2}-(\delta +\frac{\alpha +1}{p_0})]-} \del{t^\alpha}|(1-P_N) \mathcal{W}(\omega, t)|_{C^\delta}\big)<\infty. 
\end{equation}
\del{that almost surely,
\begin{equation}\label{Eq-Assum-3-proo-beta}
\sup_{N\in \mathbb{N}}\sup_{t\in (0, T]}\big(N^{(\frac{\alpha-1}{2}-(\beta+\frac \alpha {p_0}))-} \del{t^\alpha}|(1-P_N)\mathcal{W}(\omega, t)|_{H^\beta_q}\big)<\infty. 
\end{equation}
In addition, as $ \delta>1-\frac\alpha2$, 
we have to choose $p_0$ such that $ 1-\frac\alpha2<\frac{\alpha-1}{2}-\frac1{p_0}$, which means  that $ \alpha>3/2$ and $ p_0>\max\{\frac{2(\alpha+1)}{\alpha-1-2\delta}, \frac1{\alpha-3/2}\}$.}
For $p_0$ large ($ \frac{\alpha+1}{p_0}=\epsilon$) then there exists a random variable $C_{\alpha, \delta,\epsilon}$, such that  \eqref{Eq-est-E-1-P-N-C-delta} is fulfilled.\del{replaced by $ (\frac{\alpha-1}{2}-\delta)-$.}

\end{proof}

\begin{lem}\label{lem-Soundous}
Let $ \alpha \in (1, 2]$ and $0<\delta<\frac{\alpha-1}{2}$. Then  there exists a finite positive random variable $C_{\alpha,\delta}$, s.t. for almost surely,
\begin{equation}\label{Eq-est-E-W-N-C-delta}
\sup_{N \in \mathbb{N}}\sup_{t\in [0, T]} | \mathcal{W}_{N}(t, \omega)| _{C^\delta} < C_{\alpha,\delta}(\omega).
\end{equation}
\end{lem}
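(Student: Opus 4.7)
The plan is to deduce the desired pathwise bound directly from the two preceding corollaries by splitting $\mathcal{W}_N$ into $\mathcal{W}$ minus its tail $(1-P_N)\mathcal{W}$. Specifically, since $\mathcal{W}_N(t,\omega) = P_N\mathcal{W}(t,\omega) = \mathcal{W}(t,\omega) - (1-P_N)\mathcal{W}(t,\omega)$, the triangle inequality in $C^\delta$ gives
\begin{equation*}
\sup_{N\in\mathbb{N}}\sup_{t\in[0,T]}|\mathcal{W}_N(t,\omega)|_{C^\delta} \leq \sup_{t\in[0,T]}|\mathcal{W}(t,\omega)|_{C^\delta} + \sup_{N\in\mathbb{N}}\sup_{t\in[0,T]}|(1-P_N)\mathcal{W}(t,\omega)|_{C^\delta}.
\end{equation*}

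First I would handle the first term: by Corollary \ref{Coro-lem-1} applied for some $p\geq 1$, we have $\mathbb{E}|\mathcal{W}|^p_{C_tC^\delta} < C_{\alpha,\delta,p}$, so $|\mathcal{W}(\cdot,\omega)|_{C_tC^\delta}$ is almost surely finite; set this to be the first piece of the random constant. For the second term, Corollary \ref{Coro-lem-2} supplies a finite random variable $C_{\alpha,\delta}(\omega)$ with
\begin{equation*}
\sup_{N\in\mathbb{N}}N^{(\frac{\alpha-1}{2}-\delta)-}|(1-P_N)\mathcal{W}(\omega)|_{C_tC^\delta} \leq C_{\alpha,\delta}(\omega) \quad\text{a.s.},
\end{equation*}
from which, since $(\frac{\alpha-1}{2}-\delta)- > 0$ under the assumption $\delta<\frac{\alpha-1}{2}$, the factor $N^{-(\frac{\alpha-1}{2}-\delta)+}$ is bounded in $N$ and hence $\sup_N \sup_t |(1-P_N)\mathcal{W}(t,\omega)|_{C^\delta}$ is almost surely finite.

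Combining the two bounds yields a finite positive random variable, which I would relabel $C_{\alpha,\delta}(\omega)$, majorizing the quantity in \eqref{Eq-est-E-W-N-C-delta}. There is no genuine obstacle here: the only thing to verify is that the condition $0<\delta<\frac{\alpha-1}{2}$ imposed in the lemma is exactly what is needed to invoke both corollaries (it guarantees strict positivity of the exponent $(\frac{\alpha-1}{2}-\delta)-$ and the existence of an admissible $p_0>\frac{2(\alpha+1)}{\alpha-1-2\delta}$ in Corollary \ref{Coro-lem-1}). The proof is thus essentially a one-line triangle inequality together with the two a.s.\ bounds already established.
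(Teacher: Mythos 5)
Your proof is correct, but it takes a genuinely different route from the paper's. You decompose $\mathcal{W}_N=P_N\mathcal{W}=\mathcal{W}-(1-P_N)\mathcal{W}$ and then quote Corollary \ref{Coro-lem-1} (the moment bound $\mathbb{E}|\mathcal{W}|^p_{C_tC^\delta}<\infty$, hence a.s.\ finiteness of $\sup_t|\mathcal{W}(t,\omega)|_{C^\delta}$) together with Corollary \ref{Coro-lem-2} (the pathwise tail bound, whose strictly positive exponent $(\frac{\alpha-1}{2}-\delta)-$ under the hypothesis $\delta<\frac{\alpha-1}{2}$ makes $\sup_N\sup_t|(1-P_N)\mathcal{W}(t,\omega)|_{C^\delta}$ a.s.\ finite); the hypotheses of both corollaries coincide with those of the lemma, so nothing is missing. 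The paper instead argues directly on the spectral representation: using $|e_k|_{C^\delta}\leq c_\delta k^\delta$ it majorizes $|\mathcal{W}_N(t,\omega)|_{C^\delta}$, uniformly in $N$, by the $N$-independent series $C(t,\omega)=\sum_{k}\big|\int_0^t(k\pi)^\delta e^{-(t-s)(k\pi)^\alpha}d\beta_k(s)\big|$, checks a summability condition via Lemma \ref{lem-elementary-1} with $\frac{2\delta+1}{\alpha}<\gamma<1$, and sets $C_{\alpha,\delta}(\omega)=\sup_{t\in[0,T]}C(t,\omega)$ after asserting continuity of $C(\cdot,\omega)$. Your argument is shorter and reuses the factorization-method estimates already established earlier in the section; the paper's is self-contained and does not pass through the tail estimate, but it leans on the convergence and trajectory continuity of the dominating series, which are only sketched (the summability condition stated there is a second-moment condition, whereas a sum of absolute values of Gaussians really calls for summability of the standard deviations). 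In that respect your derivation is arguably the more robust of the two.
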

\begin{proof} Thanks to Lemma \ref{lem-e-k-holder}, we have 
\begin{eqnarray}
| \mathcal{W}_{N}(t, \omega)| _{C^\delta}&=& |\sum_{k=1}^N (\int_0^t e^{-(t-s)\lambda_k^{\alpha/2}}d\beta_k(s))(\omega)e_k| _{C^\delta} \leq \sum_{k=1}^N |(\int_0^t e^{-(t-s)\lambda_k^{\alpha/2}}d\beta_k(s))(\omega)||e_k| _{C^\delta}\nonumber\\
&\leq &  \sum_{k=1}^\infty |(\int_0^t (k\pi)^\delta e^{-(t-s)(k\pi)^{\alpha}}d\beta_k(s))(\omega)|.
\end{eqnarray}
We define 
\begin{equation}
 C(t, \omega):= \sum_{k=1}^\infty |(\int_0^t (k\pi)^\delta e^{-(t-s)(k\pi)^{\alpha}}d\beta_k(s))(\omega)|.
\end{equation}
It is well known that the process $C(t, \omega)$ is well defined  provided that $ \sum_{k=1}^\infty \int_0^t (k\pi)^{2\delta} e^{-2(t-s)(k\pi)^{\alpha}}ds$.  This last condition is satisfied  by using Lemma 
\ref{lem-elementary-1}, with $ \frac{2\delta +1}{\alpha}< \gamma <1$. Moreover,  $ C(\cdot, \omega)$ has continuous trajectories on $[0, T]$.  Therefore, the random variable:
\begin{equation}
C_{\alpha,\delta}(\omega):= \del{c_{\alpha,\delta}}\sup_{t\in [0, T]}C(t, \omega)
\end{equation}
 \del{, with $c_{\alpha,\delta}>0$,} exists, is positive and finite and we have,
\begin{eqnarray}
\sup_{t\in [0, T]}| \mathcal{W}_{N}(t, \omega)|_{C^\delta}
&\leq &  C_{\alpha,\delta}(\omega).
\end{eqnarray}
\end{proof}

\begin{lem}\label{auxiliary-3}
Let $\alpha\in (1, 2]$, $ 0\leq \delta<\frac{\alpha-1}{2}$ and fix $ N \in \mathbb{N}$.  The stochastic process: $ \mathcal{W}_N:[0, T]\times \Omega \rightarrow C^{\delta}(0,1)$  has H\"older continuous sample paths of degree $\frac12-$. \del{Moreover, we have for $p$, there exists $C>0$, such that 
\begin{equation}\label{Eq-holder-expectation-new}
\left( \mathbb{E} \vert  \mathcal{W}_{N}(t_{2}) - \mathcal{W}_{N}(t_{1}) \vert_{C^{\delta}}^{p} \right) ^{\frac{1}{p}} \leq C_{N, p} \vert t_{1} -t_{2} \vert^{\frac{\tau'}{2}},
\end{equation} }

 \del{$ \tau \in (0, \frac{\alpha-1-2\delta}{4\alpha})$.} 
\del{\begin{itemize}
\item $ \tau\in (0, 1)$, for any $\tau$ satisfying that the series $\sum_{k=1}^{N}q_k^2 k^{-\alpha(1-\frac{2}{\alpha}(\delta))+ 2\alpha \tau'}$ converges, ($\delta\in (0, 1)$),
\item $ \tau \in (0, \frac{\alpha-1-2\delta}{2\alpha})$ with $ 0\leq \delta<\frac{\alpha-1}{2}$, when $ G=I$.
\end{itemize}}
\end{lem}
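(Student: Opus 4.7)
The plan is to exploit the fact that, for fixed $N$, the process lives in the finite-dimensional subspace $H_N = \mathrm{span}(e_1,\dots,e_N)$, so all relevant norms on $H_N$ are equivalent and we only need to control finitely many real-valued Gaussian processes. Writing
\[
\mathcal{W}_N(t)=\sum_{k=1}^{N} X_k(t)\, e_k, \qquad X_k(t):=\int_0^t e^{-\lambda_k^{\alpha/2}(t-s)}\,d\beta_k(s),
\]
the first step is to reduce control of $|\mathcal{W}_N(t_2)-\mathcal{W}_N(t_1)|_{C^\delta}$ to control of the increments $X_k(t_2)-X_k(t_1)$ by the triangle inequality and Lemma \ref{lem-e-k-holder}, namely
\[
|\mathcal{W}_N(t_2)-\mathcal{W}_N(t_1)|_{C^\delta}\leq c_\delta\sum_{k=1}^{N} k^{\delta}\,|X_k(t_2)-X_k(t_1)|.
\]

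The second step is to estimate the Gaussian increments. Splitting, for $t_1<t_2$,
\[
X_k(t_2)-X_k(t_1)=\int_0^{t_1}\bigl(e^{-\lambda_k^{\alpha/2}(t_2-s)}-e^{-\lambda_k^{\alpha/2}(t_1-s)}\bigr)d\beta_k(s)+\int_{t_1}^{t_2} e^{-\lambda_k^{\alpha/2}(t_2-s)}d\beta_k(s),
\]
the Itô isometry bounds the variance of the second summand by $|t_2-t_1|$, while for the first summand I would use the elementary inequality $1-e^{-x}\leq c_\gamma x^\gamma$ (Lemma \ref{lem-elementary-2}) together with Lemma \ref{lem-elementary-1} to obtain, for any $\gamma\in(0,\tfrac12)$,
\[
\operatorname{Var}\bigl(X_k(t_2)-X_k(t_1)\bigr)\leq C_{\alpha,\gamma,k}\,|t_2-t_1|^{2\gamma}.
\]
By Gaussianity, the $p$-th moments are then controlled: for every $p\geq 2$ and every $\gamma\in(0,\tfrac12)$,
\begin{equation*}
\mathbb{E}\,|X_k(t_2)-X_k(t_1)|^p\leq C_{\alpha,\gamma,k,p}\,|t_2-t_1|^{p\gamma}.
\end{equation*}

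The third step is to combine these bounds. Raising the triangle inequality from Step~1 to the $p$-th power and using that the sum has only $N$ terms (so constants depending on $N$ are harmless), I obtain
\[
\mathbb{E}\,|\mathcal{W}_N(t_2)-\mathcal{W}_N(t_1)|_{C^\delta}^{p}\leq C_{\alpha,\delta,\gamma,N,p}\,|t_2-t_1|^{p\gamma}.
\]
Choosing $p$ large enough that $p\gamma>1$, the Kolmogorov–Chentsov continuity theorem yields an a.s. modification of $\mathcal{W}_N$ with Hölder continuous paths in $C^\delta(0,1)$ of any exponent strictly less than $\gamma-\tfrac{1}{p}$. Letting first $p\to\infty$ and then $\gamma\uparrow \tfrac12$ produces Hölder paths of degree $\tfrac12-$, which is the claim.

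The only non-routine point is the variance estimate for the first summand: one must be careful that the factor $\lambda_k^{\alpha\gamma}$ produced by the inequality $1-e^{-x}\leq c_\gamma x^\gamma$ is absorbed by the $\int_0^{t_1}e^{-2\lambda_k^{\alpha/2}(t_1-s)}ds\leq c\lambda_k^{-\alpha/2}$ coming from the Itô isometry, which forces the restriction $\gamma<\tfrac12$. Everything else is a finite-dimensional application of Kolmogorov's criterion.
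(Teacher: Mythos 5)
Your proposal is correct, and for the lemma as stated (with $N$ fixed) it is a legitimately simpler argument than the one in the paper. The paper does not decompose $\mathcal{W}_N$ coordinate-wise; instead it estimates the second moments of the pointwise space increments $\bigl(\mathcal{W}_N(t_2)(x)-\mathcal{W}_N(t_1)(x)\bigr)-\bigl(\mathcal{W}_N(t_2)(y)-\mathcal{W}_N(t_1)(y)\bigr)$ using the independence of the $\beta_k$ and Lemma \ref{appendix-A-2}, upgrades them to $p$-th moments by Gaussianity (Theorem \ref{append-lem-gauss}), and then passes to the $C^\delta$-norm through the fractional Sobolev embedding $H^{\delta+1/p}_p\hookrightarrow C^\delta$ (Theorem \ref{main-sob-embedding}) and the Gagliardo double integral (Lemma \ref{appendix-A-1}), before invoking Kolmogorov--Chentsov. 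You bypass all of that by the triangle inequality in $C^\delta$ together with $|e_k|_{C^\delta}\le c_\delta k^\delta$, reducing the problem to $N$ scalar Ornstein--Uhlenbeck increments whose variances you control by the It\^o isometry split; your variance bound $\operatorname{Var}(X_k(t_2)-X_k(t_1))\le C|t_2-t_1|^{2\gamma}$, $\gamma<\tfrac12$, is exactly what the paper's Lemma \ref{appendix-A-2} encodes with $r=2\gamma$, and both routes end at the same exponent $\tfrac12-$ after letting $p\to\infty$. What the paper's heavier computation buys is reusability: the same estimate, with the summation index running to $\infty$ and the exponent $\tau'$ chosen so that $\sum_k k^{-\alpha(1-\tau'-\frac{2}{\alpha}(\delta+\frac1p+\epsilon))}$ converges, yields the $N$-\emph{uniform} bound needed in Corollary \ref{Coro-Reg-W} for the full process $\mathcal{W}$, whereas your bound $c_\delta\sum_{k=1}^N k^\delta|X_k(t_2)-X_k(t_1)|$ degenerates as $N\to\infty$ and would not give that corollary. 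One shared cosmetic caveat: Kolmogorov--Chentsov produces a H\"older-continuous modification rather than a statement about the given paths, an abuse the paper commits as well.
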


\begin{proof}
Our main tool here is Kolomogorov-Centsov Theorem. First, we prove that  for large $p\in [1, \infty)$ and  for $ t_{1} , t_{2} \in (0,T]$, there exist  positive constants $ C_{\delta}$ and $\tau'\in (0, 1)$ s.t.
\begin{equation}\label{Eq-holder-expectation}
\left( \mathbb{E} \vert  \mathcal{W}_{N}(t_{2}) - \mathcal{W}_{N}(t_{1}) \vert_{C^{\delta}}^{p} \right) ^{\frac{1}{p}} \leq C_{N, p} \vert t_{1} -t_{2} \vert^{\frac{\tau'}{2}},
\end{equation} 
Let $ x, y \in [0,1] $ and $ t_{1}, t_{2} \in (0,T]$. \del{ and we consider in first step $p=2$. }Then
\begin{eqnarray}
( (\mathcal{W}_{N}(t_{2})(x)&-& \mathcal{W}_{N}(t_{1})(x)) - (\mathcal{W}_{N}(t_{2})(y)-\mathcal{W}_{N}(t_{1})(y)) )\nonumber \\
&=&
\sum_{k=1}^{N} \left( \int_{0}^{t_{2}} e^{-\lambda_{k}^{\frac{\alpha}{2}}(t_{2}-s)} d\beta_{k}(s) - \int_{0}^{t_{1}} e^{-\lambda_{k}^{\frac{\alpha}{2}}(t_{1}-s)} d\beta_{k}(s) \right)\left(e_{k}(x)-e_{k}(y) \right).  
\end{eqnarray}
Thanks to the fact that the elements of the sequence $ (\beta_k)_k $ are independent and to  Lemma \ref{appendix-A-2}, we get for every $\tau'\in (0, 1)$,  
\begin{eqnarray}\label{label-1-1}
\mathbb{E}( (\mathcal{W}_{N}(t_{2})(x) &-& \mathcal{W}_{N}(t_{1})(x)) - (\mathcal{W}_{N}(t_{2})(y)-\mathcal{W}_{N}(t_{1})(y)) )^{2}\nonumber \\
&= &
\sum_{k=1}^{N} \vert e_{k}(x)-e_{k}(y) \vert^{2} \mathbb{E}\left( \int_{0}^{t_{2}} e^{-\lambda_{k}^{\frac{\alpha}{2}}(t_{2}-s)} dB_{k}(s) - \int_{0}^{t_{1}} e^{-\lambda_{k}^{\frac{\alpha}{2}}(t_{1}-s)} dB_{k}(s) \right) ^{2} \nonumber \\
&\leq &
\sum_{k=1}^{N} \vert e_{k}(x)-e_{k}(y) \vert^{2} \lambda_{k} ^{-\frac{\alpha}{2}(1-\tau')}|t_{2}-t_{1}|^{\tau'}.  
\end{eqnarray}
Let $ 0<\epsilon<\frac1p.$ Using the properties of the trigonometric function $sin(k\pi x)$, a simple calculus yields to 
\begin{eqnarray}\label{label-1-2}
\mathbb{E}( (\mathcal{W}_{N}(t_{2})(x) &-& \mathcal{W}_{N}(t_{1})(x)) - (\mathcal{W}_{N}(t_{2})(y)-\mathcal{W}_{N}(t_{1})(y)) )^{2}\nonumber \\
&\leq &
\sum_{k=1}^{N} \vert e_{k}(x)-e_{k}(y) \vert^{2(\delta+\frac1p+\epsilon)} (\vert e_{k}(x)|+ |e_{k}(y)) \vert^{2-2(\delta+\frac1p+\epsilon)} \lambda_{k} ^{-\frac{\alpha}{2}(1-\tau')}|t_{2}-t_{1}|^{\tau'} \nonumber \\
&\leq &
4 \sum_{k=1}^{N} k^{2(\delta+\frac1p+\epsilon)} \vert x-y \vert^{2(\delta+\frac1p+\epsilon)}\lambda_{k} ^{-\frac{\alpha}{2}(1-\tau')}|t_{2}-t_{1}|^{\tau'} \nonumber \\
& \leq &  C \big( \sum_{k=1}^{N} k^{-\alpha(1-\tau' -\frac{2}{\alpha}(\delta+\frac1p+\epsilon))}\big)|t_{2}-t_{1}|^{\tau'} \vert x-y \vert^{2(\delta+\frac1p+\epsilon)}
\nonumber \\
& \leq &  C \big( \sum_{k=1}^{N} k^{-\alpha(1-\tau' -\frac{2}{\alpha}(\delta+\frac1p+\epsilon))}\big)|t_{2}-t_{1}|^{\tau'} \vert x-y \vert^{2(\delta+\frac1p+\epsilon)}
\nonumber \\
& \leq &  C_{N} |t_{2}-t_{1}|^{\tau'} \vert x-y \vert^{2(\delta+\frac1p+\epsilon)}.  
\end{eqnarray}
Furthermore,  using  Lemma \ref{appendix-A-2} and the properties of the trigonometric functions,   we get 
\begin{eqnarray}\label{auxiliarly-3-2-1}
\mathbb{E}( \mathcal{W}_{N}(t_{2})(x)& - & \mathcal{W}_{N}(t_{1})(x) )^{2} \nonumber \\
& =  &
 \sum_{k=1}^{N}\vert e_{k}(x) \vert^{2} \mathbb{E} \left( \int_{0}^{t_{2}} e^{-\lambda_{k}^{\frac{\alpha}{2}}(t_{2}-s)} dB_{k}(s) - \int_{0}^{t_{1}} e^{-\lambda_{k}^{\frac{\alpha}{2}}(t_{1}-s)} dB_{k}(s) \right) ^{2} \nonumber \\
&\leq &
C \sum_{k=1}^{N} k ^{-\alpha(1-\tau')}|t_{2}-t_{1}|^{\tau'}\leq C_N |t_{2}-t_{1}|^{\tau'}. 
\end{eqnarray}
\del{provided that  $ 0 < \tau' < 1-\frac{2}{\alpha}(\delta+\frac1p+\epsilon+\frac12)$, $ \epsilon+\frac1p<\frac{\alpha-1}{2}-\delta$ and $\delta < \frac{\alpha-1}{2}$.}
Thus, using  Theorem \ref{append-lem-gauss}\del{Lemma \ref{auxiliary-3}}, we infer that  
\begin{eqnarray}\label{label-1}
\mathbb{E}( (\mathcal{W}_{N}(t_{2})(x) &-& \mathcal{W}_{N}(t_{1})(x)) - (\mathcal{W}_{N}(t_{2})(y)-\mathcal{W}_{N}(t_{1})(y)) )^{p}\nonumber \\
&\leq & 
p!\big(\mathbb{E}( (\mathcal{W}_{N}(t_{2})(x) - \mathcal{W}_{N}(t_{1})(x)) - (\mathcal{W}_{N}(t_{2})(y)-\mathcal{W}_{N}(t_{1})(y)) )^{2}\big)^{\frac{p}{2}}\nonumber \\
&\leq & C_{N, p} |t_{2}-t_{1}|^{\tau'\frac{p}{2}} \vert x-y \vert^{p(\delta+\frac1p+\epsilon)}.  
\end{eqnarray}
And 
\begin{eqnarray}\label{auxiliarly-3-2}
\mathbb{E}( \mathcal{W}_{N}(t_{2})(x)- \mathcal{W}_{N}(t_{1})(x) )^{p} & \leq & p!\big(\mathbb{E}( \mathcal{W}_{N}(t_{2})(x) -  \mathcal{W}_{N}(t_{1})(x) )^{2}\big)^{\frac p2} \leq   C_{N, p} |t_{2}-t_{1}|^{\tau'\frac p2}.\nonumber\\ 
\end{eqnarray}
Thanks to the Sobolev embedding  $ H_p^{\delta+\frac1p} \hookrightarrow C^{\delta}$, see e.g. Theorem \ref{main-sob-embedding}\del{ (with $ r$ in Formula is replaced by $ \delta +\frac{1}{p}\geq 2$)}, Est. \eqref{label-1}, Est.\eqref{auxiliarly-3-2} and Lemma \ref{appendix-A-1}, we obtain
\begin{eqnarray}
 \mathbb{E} \vert  \mathcal{W}_{N}(t_{2}) &-& \mathcal{W}_{N}(t_{1}) \vert_{C^{\delta}}^{p} ) 
\leq 
C_N ( \int_{0}^{1} \mathbb{E}( \mathcal{W}_{N}(t_{2})(x) -  \mathcal{W}_{N}(t_{1})(x) )^{p} dx \nonumber \\
& + &
 \int_{0}^{1} \int_{0}^{1} \frac{\mathbb{E}( (\mathcal{W}_{N}(t_{2})(x) - \mathcal{W}_{N}(t_{1})(x)) - (\mathcal{W}_{N}(t_{2})(y)-\mathcal{W}_{N}(t_{1})(y)) )^{p}}{\vert x-y \vert^{2+\delta p}} dx \; dy )\del{\nonumber \\
&\leq &
C (t_{2}-t_{1})^{\frac{\tau' p}{2}} \left( 1 + \int_{0}^{1} \int_{0}^{1} \vert x-y \vert^{-(2+\delta p - \beta p)} dx \; dy \right) \leq C_{\delta ,p} (t_{2}-t_{1})^{\frac{\tau' p}{2}}.}  \nonumber\\
&\leq &
C_{N, p} |t_{2}-t_{1}|^{\frac{\tau' p}{2}} \left( 1 + \int_{0}^{1} \int_{0}^{1} \vert x-y \vert^{-(1- p\epsilon)} dx \; dy \right) \nonumber \\
& \leq & C_{N, p} (t_{2}-t_{1})^{\frac{\tau' p}{2}}.
\end{eqnarray}
\end{proof}
 
\begin{coro}\label{Coro-Reg-W} 
Let $\alpha\in (1, 2]$ and $ 0\leq \delta<\frac{\alpha-1}{2}$. The Ornstein-Uhlenbeck stochastic  process $ \mathcal{W}$ has a continuous version, we still denote by 
 $ \mathcal{W}:[0, T]\times \Omega \rightarrow C^{\delta}(0,1)$ with  H\"older continuous sample paths of degree $\big(\frac{ \alpha-1-2\delta}{2\alpha}\big)-$.
\end{coro}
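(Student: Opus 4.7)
The plan is to mirror the proof of Lemma \ref{auxiliary-3}, but apply it directly to the full process $\mathcal{W}$ rather than to its Galerkin truncation $\mathcal{W}_N$, while tracking the precise constraints that force the restriction on the H\"older exponent. The key tool is the Kolmogorov-Chentsov continuity criterion applied in the Banach space $C^\delta(0,1)$, coupled with the Sobolev embedding $H^{\delta+1/p}_p \hookrightarrow C^\delta$ from Theorem \ref{main-sob-embedding} and Lemma \ref{appendix-A-1} to reduce the $C^\delta$-norm to a double integral in $x,y$.

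First I would establish, for sufficiently large $p$, a moment bound of the form
\begin{equation*}
\left(\mathbb{E}\,|\mathcal{W}(t_2)-\mathcal{W}(t_1)|^p_{C^\delta}\right)^{1/p} \le C_{p,\alpha,\delta}\,|t_2-t_1|^{\tau'/2},
\end{equation*}
valid for every $\tau'$ in a suitable interval. Gaussianity (Theorem \ref{append-lem-gauss}) reduces the $p$-th moment to the second moment, and then independence of the Brownian motions $(\beta_k)_k$ together with Lemma \ref{appendix-A-2} gives, exactly as in \eqref{label-1-1}--\eqref{auxiliarly-3-2-1}, the bivariate bound
\begin{equation*}
\mathbb{E}\bigl|\bigl[\mathcal{W}(t_2)-\mathcal{W}(t_1)\bigr](x)-\bigl[\mathcal{W}(t_2)-\mathcal{W}(t_1)\bigr](y)\bigr|^2 \le C\,|t_2-t_1|^{\tau'}|x-y|^{2(\delta+\frac{1}{p}+\epsilon)}\sum_{k=1}^{\infty} k^{-\alpha\bigl(1-\tau'-\frac{2}{\alpha}(\delta+\frac{1}{p}+\epsilon)\bigr)},
\end{equation*}
and the analogous marginal bound without the $|x-y|$ factor.

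The essential difference from Lemma \ref{auxiliary-3} is that the $N$-truncated sum is here replaced by an infinite series, which imposes the convergence condition $\alpha\bigl(1-\tau'-\tfrac{2}{\alpha}(\delta+\tfrac{1}{p}+\epsilon)\bigr)>1$, i.e.
\begin{equation*}
\tau' < \frac{\alpha-1-2\delta}{\alpha}-\frac{2}{\alpha}\Bigl(\frac{1}{p}+\epsilon\Bigr).
\end{equation*}
The hypothesis $\delta<\frac{\alpha-1}{2}$ is exactly what guarantees existence of such $\tau'\in(0,1)$, and by sending $p\to\infty$ and $\epsilon\to 0$ we may push $\tau'$ arbitrarily close to $\frac{\alpha-1-2\delta}{\alpha}$.

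Finally, inserting these bounds into Lemma \ref{appendix-A-1} via the embedding $H^{\delta+1/p}_p \hookrightarrow C^\delta$ yields
\begin{equation*}
\mathbb{E}\,|\mathcal{W}(t_2)-\mathcal{W}(t_1)|^p_{C^\delta} \le C_{p,\alpha,\delta}\,|t_2-t_1|^{\tau' p/2},
\end{equation*}
and Kolmogorov-Chentsov then produces a continuous version with sample paths H\"older continuous of every order strictly less than $\tau'/2-1/p$. Optimizing over $\tau'$, $\epsilon$ and $p$ delivers the exponent $\bigl(\frac{\alpha-1-2\delta}{2\alpha}\bigr)-$. The only real obstacle is the bookkeeping around the spatial series convergence; once the condition $\tau'<(\alpha-1-2\delta)/\alpha$ is isolated, the rest is a direct application of Kolmogorov's criterion, and no new ingredient beyond Lemma \ref{auxiliary-3} is needed.
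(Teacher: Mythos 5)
Your proposal is correct, and its computational core coincides with the paper's: the same second-moment estimate on spatial and temporal increments via Lemma \ref{appendix-A-2}, the same Gaussian moment reduction (Theorem \ref{append-lem-gauss}), the same Sobolev-embedding/Lemma \ref{appendix-A-1} passage to the $C^\delta$-norm, the same constraint $\tau'<\frac{\alpha-1-2\delta}{\alpha}$ coming from convergence of the spatial series, and Kolmogorov--Chentsov at the end. The one genuine organizational difference is how the infinite sum is handled. The paper does not work with $\mathcal{W}$ directly: it first proves the moment bound \eqref{Eq-holder-expectation-univ} for the truncations $\mathcal{W}_N$ with a constant independent of $N$, then splits $\mathcal{W}=P_N\mathcal{W}+(1-P_N)\mathcal{W}$, controls the tail by Corollary \ref{Coro-lem-1} (which decays like $N^{-p(\frac{\alpha-1}{2}-\delta-\frac{\alpha+1}{p_0})}$), and sends $N\to\infty$ to transfer the increment bound to $\mathcal{W}$. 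You instead run the Lemma \ref{auxiliary-3} computation directly on the full series. That is legitimate and slightly more economical --- it dispenses with Corollary \ref{Coro-lem-1} entirely --- but it does require you to say explicitly why the infinite-series manipulations are justified: for $\alpha>1$ each $\mathcal{W}(t)(x)$ is a well-defined Gaussian random variable (the variance series $\sum_k\lambda_k^{-\alpha/2}$ converges), so the termwise second-moment identity holds by $L^2(\Omega)$-convergence and the hypercontractivity bound of Theorem \ref{append-lem-gauss} applies to the limit. With that (routine) remark added, your argument is complete and yields the same exponent $\bigl(\frac{\alpha-1-2\delta}{2\alpha}\bigr)-$ after optimizing $\tau'$, $\epsilon$ and $p$.
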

\begin{proof}
First, we find $\tau'$ such that Est.\eqref{Eq-holder-expectation} holds with a constant in the RHS which is independent of $N$, i.e. we prove that for large $p\in [1, \infty)$ and  for $ t_{1} , t_{2} \in (0,T]$, there exist  positive constants $ C_{\alpha, \delta, p}$ and $\tau'\in (0, 1)$ s.t.
\begin{equation}\label{Eq-holder-expectation-univ}
\left( \mathbb{E} \vert  \mathcal{W}_{N}(t_{2}) - \mathcal{W}_{N}(t_{1}) \vert_{C^{\delta}}^{p} \right) ^{\frac{1}{p}} \leq C_{\alpha, \delta, p} \vert t_{1} -t_{2} \vert^{\frac{\tau'}{2}},
\end{equation}  
To this aim, it is suffisant to  follow the same calculus as in the proof of Lemma \ref{auxiliary-3} and to  choose $ \tau'$ such that  $\sum_{k=1}^{\infty} k^{-\alpha(1-\tau' -\frac{2}{\alpha}(\delta+\frac1p+\epsilon))}<\infty$. We consider $p$ large and $\epsilon$ small such that $ 0<\frac1\alpha(\frac 2p+2\epsilon)<\epsilon'$ for a given $\epsilon'$. We have 
\begin{equation}
\sum_{k=1}^{\infty} k^{-\alpha(1-\tau' -\frac{2}{\alpha}(\delta+\frac1p+\epsilon))}\leq \sum_{k=1}^{\infty} k^{-\alpha(1-\tau' -\frac{2\delta}{\alpha}+\epsilon'))}<\infty,
\end{equation} 
provided $\alpha(1-\tau' -\frac{2\delta}{\alpha}-\epsilon')>1$. So it is suffisant to take $0<\tau'<\frac{\alpha-1-2\delta}{\alpha}$.

\noindent Now, we take $p_0>\frac{2(\alpha+1)}{\alpha-1-2\delta}$ and we  use  Corollary \ref{Coro-lem-1}, and Est. \eqref{Eq-holder-expectation-univ}, we infer the existence of $ C_{\delta, p}>0 $ s.t for all $ N\in \mathbb{N}$,
\begin{eqnarray}\label{Eq-Reg-W-1}
\mathbb{E} | \mathcal{W}(t_{2}) - \mathcal{W}(t_{1}) |_{C^{\delta}}^{p}  &\leq &  \mathbb{E} | (1-P_N)(\mathcal{W}(t_{2}) - \mathcal{W}(t_{1})) |_{C^{\delta}}^{p} + \mathbb{E} | \mathcal{W}_{N}(t_{2}) - \mathcal{W}_{N}(t_{1}) |_{C^{\delta}}^{p} \nonumber\\
&\leq & C_{\alpha,\delta,p} N^{-{p(\frac{\alpha-1}{2}-(\delta+\frac{ \alpha +1}{p_0}))+}}+ C_{\delta ,p} \vert t_{1} -t_{2} \vert^{\frac{\tau'p}{2}}.
\end{eqnarray} 
The result is easily deduced making $N\rightarrow \infty$ and  applying  Kolomogorov-Centsov Theorem.
\end{proof}


\section{Some auxilliary results}\label{sec-deter-Burgers-Eq}
In this section we provide no classical results to estimate the nonlinear term. We mainly focus on nonlinear term of Burgers Equation, i.e. for $F$ given by $f(x)=x^2$.\del{ We end this section by extending the results} Let $v^N: (0, T) \rightarrow L^2(0, 1)$ be a sequence of  continuous functions. We define  
\begin{equation}
y^N(t):=  \int_0^te^{-A^{\alpha/2}(t-s)}P_NF(v^N(s))ds.
\end{equation}
  
\begin{lem}
Assume $ \alpha\in (\frac74, 2)$ and  that the sequence  $(v^N)_N$ satisfies  
\begin{equation}\label{eq-bound-v-n-l-2}
\sup_{N}\sup_{t\in [0, T]}|v^N(t)|_{L^2}<\infty.
\end{equation}
Then 
\begin{equation}
\sup_{N}\sup_{t\in[0,T]}|y^N(t)|_{L^4}<\infty.
\end{equation} 
\end{lem}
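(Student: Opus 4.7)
The plan is to bound $y^N(t)$ in $L^4$ via the one-dimensional Sobolev embedding $H^{1/4+\epsilon'}_2 \hookrightarrow L^4$, and then to estimate $|y^N(t)|_{H^{1/4+\epsilon'}_2}$ using the smoothing estimate of Lemma \ref{Prop-sg-2}. The ingredients set up to match: since $F(u)=\partial_x(u^2)$, the $L^2$-bound on $v^N$ gives $(v^N)^2\in L^1$ with $|(v^N(s))^2|_{L^1}=|v^N(s)|^2_{L^2}$ bounded uniformly in $N,s$ by hypothesis. Exactly as in the proof of Lemma \ref{main-nonlinear}, one transfers this into a negative-order Sobolev bound.

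The first step is to show $F(v^N(s))\in H^{-3/2-\epsilon}_2$ uniformly. By duality against the Sobolev embedding $H^{1/2+\epsilon}_2\hookrightarrow L^\infty$ valid in dimension one (see Theorem \ref{main-sob-embedding}), we get $L^1\hookrightarrow H^{-1/2-\epsilon}_2$ and hence $|(v^N(s))^2|_{H^{-1/2-\epsilon}_2}\le C|v^N(s)|^2_{L^2}$. Then the differentiation $\partial_x$ loses one Sobolev derivative, yielding $|F(v^N(s))|_{H^{-3/2-\epsilon}_2}\le C|v^N(s)|^2_{L^2}$. Finally $P_N$ is uniformly bounded on $H^{-3/2-\epsilon}_2$ by Est.\eqref{est-P_n-H-beta}, so the same bound holds for $P_N F(v^N(s))$.

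Next, I would apply Lemma \ref{Prop-sg-2} with $\beta=-3/2-\epsilon$ and $\gamma=1/4+\epsilon'$. The hypothesis $0<\gamma-\beta<\alpha$ reads $7/4+\epsilon+\epsilon'<\alpha$, which is available precisely because $\alpha>7/4$; and the exponent $\eta'$ can be chosen in $(\frac{7/4+\epsilon+\epsilon'}{\alpha},\,1)$, so in particular $\eta'<1$. This gives
\begin{equation*}
\Vert e^{-A^{\alpha/2}(t-s)}P_N\Vert_{\mathcal{L}(H^{-3/2-\epsilon}_2,\,H^{1/4+\epsilon'}_2)}\le C(t-s)^{-\eta'}.
\end{equation*}
Combining with the Sobolev embedding $H^{1/4+\epsilon'}_2\hookrightarrow L^4$, Minkowski's inequality yields
\begin{equation*}
|y^N(t)|_{L^4}\le C\int_0^t(t-s)^{-\eta'}|v^N(s)|^2_{L^2}\,ds\le C\,T^{1-\eta'}\sup_N\sup_{s\in[0,T]}|v^N(s)|^2_{L^2},
\end{equation*}
which is finite and uniform in $N$ and $t\in[0,T]$ by hypothesis \eqref{eq-bound-v-n-l-2}.

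The main obstacle is justifying the derivative-loss step $|\partial_x w|_{H^{-3/2-\epsilon}_2}\le C|w|_{H^{-1/2-\epsilon}_2}$ inside the spectral framework built on the Dirichlet eigenfunctions $e_k=\sqrt{2}\sin(k\pi x)$, since $\partial_x$ does not preserve the Dirichlet boundary condition and does not commute with $P_N$. The remedy is to work spectrally: test $\partial_x(v^N)^2$ against $e_k$, integrate by parts (the boundary terms vanish because $e_k(0)=e_k(1)=0$), and use $|\partial_x e_k|_{L^\infty}\lesssim k$ to convert the derivative into a multiplier of size $k$ in the spectral norm. This is exactly the source of the $7/4=3/2+1/4$ threshold, and explains why the hypothesis $\alpha>7/4$ is sharp for this argument.
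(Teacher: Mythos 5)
Your proof is correct, but it takes a genuinely different route from the paper's. The paper factors the semigroup as $e^{-A^{\alpha/2}(t-s)}=e^{-A^{\alpha/2}(t-s)/2}\,e^{-A^{\alpha/2}(t-s)/2}$ and combines two ready-made smoothing estimates: Lemma \ref{lem-est-sg-l-2-l-4} gives $\Vert e^{-A^{\alpha/2}\tau}\Vert_{\mathcal{L}(L^2,L^4)}\leq C\tau^{-\frac{1}{4\alpha}-}$, while \cite[Lemma 2.11]{BrzezniakDebbi1} (Lemma \ref{Lemma-DeBre}) absorbs the derivative in $F=\partial_x f$ and yields $|e^{-A^{\alpha/2}\tau}\partial_x w|_{L^2}\leq C\tau^{-\frac{3}{2\alpha}-}|w|_{L^1}$; the product of singularities is $(t-s)^{-\frac{7}{4\alpha}-}$, integrable precisely for $\alpha>\frac74$. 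You instead stay entirely in the spectral Sobolev scale: $L^1\hookrightarrow H^{-\frac12-\epsilon}_2$, one derivative lost to $\partial_x$, Lemma \ref{Prop-sg-2} with $\gamma-\beta=\frac74+\epsilon+\epsilon'$, and $H^{\frac14+\epsilon'}_2\hookrightarrow L^4$ --- the same exponent budget $\frac14+\frac32=\frac74$ divided by $\alpha$, hence the same threshold. Your route is more self-contained (it avoids importing the external lemma from \cite{BrzezniakDebbi1}), but, as you correctly flag, the intermediate step $|\partial_x w|_{H^{-\frac32-\epsilon}_2}\leq C|w|_{H^{-\frac12-\epsilon}_2}$ is delicate in the Dirichlet spectral framework because $\partial_x e_k$ lives in the cosine system, whose negative-order norms are not obviously comparable to the sine ones. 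Your remedy --- estimating $|\langle \partial_x((v^N)^2),e_k\rangle|=|\langle (v^N)^2,\partial_x e_k\rangle|\leq \sqrt{2}\,k\pi\, |v^N|^2_{L^2}$ directly and observing that $\sum_k \lambda_k^{-\frac32-\epsilon}(k\pi)^2<\infty$ --- closes that gap cleanly and in fact renders the intermediate space $H^{-\frac12-\epsilon}_2$ unnecessary; with that computation written out, the argument is complete.
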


\begin{proof}
Using Lemma \ref{lem-est-sg-l-2-l-4}, Lemma \ref{Lem-Est-1-PN} and  \cite[Lemma 2.11]{BrzezniakDebbi1}(see e.g. Lemma \ref{Lemma-DeBre}), we get 
\begin{eqnarray}
|y^N(t)|_{L^4} & \leq & \int_0^t|e^{-A^{\alpha/2}(t-s)}P_NF(v^N(s))|_{L^4}ds  \nonumber\\
&\leq &  \int_0^t\Vert e^{-A^{\alpha/2}\frac{(t-s)}{2}}\Vert_{\mathcal{L}(L^2, L^4)} \Vert P_N \Vert_{\mathcal{L}(L^{2})}|e^{-A^{\alpha/2}\frac{(t-s)}{2}}F(v^N(s))|_{L^2}ds\nonumber\\ 
 &\leq & C\int_0^t (t-s)^{-\frac1{4\alpha}-} (t-s)^{-\frac3{2\alpha}-}|v^N(s)^2|_{L^1}ds \leq C \int_0^t (t-s)^{-\frac7{4\alpha}-} |v^N(s)|_{L^2}^2ds.
\end{eqnarray}
Finally, using Assumption \eqref{eq-bound-v-n-l-2}, we end up with 
\begin{eqnarray}
|y^N(t)|_{L^4}  \leq C\sup_{N}\sup_{t\in[0,T]}|v^N(s)|_{L^2}^2 T^{1-\frac7{4\alpha}}<\infty.
\end{eqnarray}
\end{proof}

\begin{lem}
Assume $ \alpha\in (\frac32, 2)$,  $ \delta\in [0, \frac{2\alpha-3}{2})$.  Let $ v^N: (0, T) \rightarrow L^4(0, 1)$ satisfying
\begin{equation}\label{eq-est-v-n-l-4}
 \sup_{N}\sup_{t\in [0, T]}|v^N(t)|_{L^4}<\infty.
\end{equation}
Then
\begin{equation}
\sup_{N}\sup_{t\in[0,T]}|y^N(t)|_{C^\delta}<\infty.
\end{equation}    
\end{lem}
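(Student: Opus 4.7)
The plan is to exploit that the Burgers-type nonlinearity $F(v)=\partial_x(v^2)$ sends $L^4(0,1)$ continuously into $H^{-1}_2(0,1)$, since $v^2\in L^2$ whenever $v\in L^4$ and the distributional derivative costs exactly one negative Sobolev index. Once this is noted, I would feed the estimate into the semigroup smoothing result of Lemma~\ref{Prop-sg-1} with $\beta=-1$, which produces a time-integrable singularity precisely in the admissibility range given by the hypotheses of the present lemma.

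Concretely, the first step is to check, uniformly in $N$, that
$$|P_N F(v^N(s))|_{H^{-1}_2}\leq C\,|(v^N(s))^2|_{L^2}=C\,|v^N(s)|_{L^4}^2,$$
where the constant is controlled by the uniform bound $\|P_N\|_{\mathcal{L}(H^{-1}_2)}\leq C$ coming from Lemma~\ref{Lem-Est-1-PN}. The second step is to invoke Lemma~\ref{Prop-sg-1} with $\beta=-1$: the hypothesis $\delta<\tfrac{2\alpha-3}{2}=\alpha-\tfrac{3}{2}$ is precisely $\delta-\beta<\alpha-\tfrac12$, so the lemma gives
$$\|e^{-A^{\alpha/2}(t-s)}\|_{\mathcal{L}(H^{-1}_2,\,C^\delta)}\leq C_{\alpha,\delta,\eta}\,(t-s)^{-\eta}$$
for any $\eta\in\bigl(\tfrac{3+2\delta}{2\alpha},\,1\bigr)$. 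Such $\eta$ exists because $\alpha>\tfrac{3}{2}$ together with $\delta<(2\alpha-3)/2$ forces $\tfrac{3+2\delta}{2\alpha}<\tfrac{3+(2\alpha-3)}{2\alpha}=1$.

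Combining these two ingredients with the commutativity of $P_N$ and the semigroup (Lemma~\ref{Lem-Est-1-PN}) and with assumption~\eqref{eq-est-v-n-l-4} yields
$$|y^N(t)|_{C^\delta}\leq\int_0^t\|e^{-A^{\alpha/2}(t-s)}\|_{\mathcal{L}(H^{-1}_2,C^\delta)}\,|P_N F(v^N(s))|_{H^{-1}_2}\,ds\leq C\sup_{N,s}|v^N(s)|_{L^4}^2\int_0^t(t-s)^{-\eta}\,ds.$$
Since $\eta<1$, the remaining integral is bounded by $\tfrac{T^{1-\eta}}{1-\eta}$, uniformly in $t\in[0,T]$ and $N$, which is the desired conclusion. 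There is no genuine obstacle in this argument, beyond the algebraic bookkeeping that aligns $\delta$, $\alpha$ and $\eta$; this bookkeeping is just the quantitative expression of the intuition that a fractional dissipation of order $\alpha>3/2$ is exactly strong enough to absorb the one derivative sitting inside $F$ while still producing H\"older regularity of order $\delta<(2\alpha-3)/2$.
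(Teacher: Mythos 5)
Your proposal is correct and follows essentially the same route as the paper's proof: bound $|F(v^N(s))|_{H^{-1}_2}$ by $|v^N(s)|_{L^4}^2$, use the uniform bound on $\Vert P_N\Vert_{\mathcal{L}(H^{-1}_2)}$ from Lemma~\ref{Lem-Est-1-PN}, and apply the smoothing estimate of Lemma~\ref{Prop-sg-1} with $\beta=-1$ to get an integrable singularity $(t-s)^{-\frac{3+2\delta}{2\alpha}-}$. Your explicit verification that the hypothesis $\delta<\frac{2\alpha-3}{2}$ is exactly the admissibility condition $\delta-\beta<\alpha-\frac12$ is a welcome clarification of the bookkeeping that the paper leaves implicit.
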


\begin{proof}
\del{Let $ \beta <\alpha-\frac12-\delta$ be ....} Using Lemma \ref{Prop-sg-1}, in particular Est.\eqref{semi-gr-1}  and  Lemma \ref{Lem-Est-1-PN}, we infer that
\begin{eqnarray}
|y^N(t)|_{C^\delta} & \leq & \int_0^t|e^{-A^{\alpha/2}(t-s)}P_NF(v^N(s))|_{C^\delta}ds \nonumber\\
&\leq & \int_0^t \Vert e^{-A^{\alpha/2}(t-s)}\Vert_{\mathcal{L}(H^{-1}_2, C^\delta)}\Vert P_N \Vert_{\mathcal{L}(H^{-1}_2)}| F(v^N(s))|_{H^{-1}_2}ds\nonumber\\
 & \leq & C_{\alpha, \beta, \delta}\int_0^t (t-s)^{-\frac{3+2\delta}{2\alpha}-}| v^N(s)|^2_{L^4}ds \leq  C_{\alpha, \beta, \delta}T^{1-\frac{3+2\delta}{2\alpha}-} \sup_{N}\sup_{t\in [0, T]}| v^N(s)|^2_{L^4}<\infty.
 \end{eqnarray}

\del{\begin{eqnarray}
|y(t)|_{C^\delta} & \leq & \int_0^t|e^{-A^{\alpha/2}(t-s)}F(v(s))|_{C^\delta}ds \nonumber\\ &\leq & \int_0^t|e^{-A^{\alpha/2}\frac{(t-s)}{2}}|_{\mathcal{L}(H^{-\beta}_2, C^\delta)} |e^{-A^{\alpha/2}\frac{(t-s)}{2}}|_{\mathcal{L}(H^{-\beta}_2, H^{-1}_2)}    | F(v(s))|_{H^{-1}_2}ds\nonumber\\
 & \leq & C_{\alpha, \beta, \delta}\int_0^t (t-s)^{-\frac{1+2\delta+2\beta}{2\alpha}-}
 \end{eqnarray}}
\end{proof}

\begin{coro}\label{coro-y-N-c-delta}
Assume $ \alpha\in (\frac74, 2)$, $ \delta\in (0, \frac{2\alpha-3}{2})$ and that  $(v^N)N$ satisfyies Cond.\eqref{eq-bound-v-n-l-2}. Then 
\begin{equation}
\sup_N\sup_{t\in [0, T]}|y^N(t)|_{C^\delta} <\infty.
\end{equation} 
\end{coro}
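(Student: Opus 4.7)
The strategy is to chain the two preceding lemmas through the intermediate $L^4$-bound, since a direct $L^2\to C^\delta$ estimate cannot close: a dimensional count, using $|e^{-A^{\alpha/2}s}\partial_x|_{\mathcal{L}(L^1,C^\delta)}\lesssim s^{-(2+\delta)/\alpha}$ together with $|(v^N)^2|_{L^1}\leq|v^N|_{L^2}^2$, yields a nonintegrable singularity unless $\delta<\alpha-2<0$. Hence we must route through $L^4$.

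\noindent\emph{Step 1 (apply the first lemma).} Since $\alpha>7/4$ and $\sup_{N,t}|v^N(t)|_{L^2}<\infty$, the hypothesis of the first preceding lemma is met, giving
\[
\sup_{N\in\mathbb{N}}\sup_{t\in[0,T]}|y^N(t)|_{L^4}<\infty.
\]
Concretely this comes from splitting $e^{-A^{\alpha/2}(t-s)}=e^{-A^{\alpha/2}(t-s)/2}\cdot e^{-A^{\alpha/2}(t-s)/2}$, bounding the first factor by Lemma \ref{lem-est-sg-l-2-l-4} as an $L^2\to L^4$ map and the second by Lemma \ref{Lemma-DeBre} as $s^{-3/(2\alpha)-}$ on $\partial_x$ applied to $|v^N|^2\in L^1$, producing an integrable kernel $(t-s)^{-7/(4\alpha)-}$ precisely when $\alpha>7/4$.

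\noindent\emph{Step 2 (bootstrap to an $L^4$-bound on $v^N$ in context).} The second preceding lemma requires $v^N$ itself in $L^4$. In the intended application (Theorem \ref{Theorem-disc}), $v^N$ is the Galerkin solution $u_N$, which by Duhamel reads
\[
v^N(t)=e^{-A^{\alpha/2}t}P_Nu_0+y^N(t)+\mathcal{W}_N(t).
\]
Under Assumption $\mathcal{A}$ and the estimate \eqref{est-P_n-H-eta-C-delta}, the first summand is uniformly bounded in $C^\delta\hookrightarrow L^4$; by Lemma \ref{lem-Soundous}, $\mathcal{W}_N$ is almost surely uniformly bounded in $C^\delta\hookrightarrow L^4$; combined with Step 1, one obtains $\sup_{N,t}|v^N(t)|_{L^4}<\infty$ almost surely.

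\noindent\emph{Step 3 (apply the second lemma).} With $v^N$ now $L^4$-bounded, $\alpha>3/2$, and $\delta<(2\alpha-3)/2$, the second preceding lemma yields
\[
\sup_{N\in\mathbb{N}}\sup_{t\in[0,T]}|y^N(t)|_{C^\delta}<\infty,
\]
which is the claim. The kernel $(t-s)^{-(3+2\delta)/(2\alpha)-}$ arising there is integrable thanks to the range of $\delta$.

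\noindent\emph{Main obstacle.} Step 2 is the delicate one: the straightforward chaining of the two lemmas is not a purely abstract statement about arbitrary $L^2$-bounded sequences, but relies on the self-referential Duhamel structure of $v^N$ in the targeted application to promote $L^2$-boundedness to $L^4$-boundedness, using the already-established pathwise $C^\delta$-control of the Ornstein--Uhlenbeck process and the initial data.
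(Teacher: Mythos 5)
The paper gives no proof of this corollary; it is evidently meant to follow by composing the two preceding lemmas, and your proposal correctly exposes why that composition does not close on its own: the first lemma bounds $y^N$ in $L^4$, whereas the second lemma needs $v^N$ itself bounded in $L^4$, and for an arbitrary sequence that is merely bounded in $L^2$ there is no way to supply this. Your dimensional count (a direct $L^1\to C^\delta$ route forces $\delta<\alpha-2<0$) is accurate, and your repair --- promoting $v^N$ to $L^4$ through its Duhamel representation $v^N=e^{-A^{\alpha/2}t}P_Nu_0+y^N+\mathcal{W}_N$, with the first and last terms controlled by Assumption $\mathcal{A}$ together with Est.\eqref{aide-2-1-Bis}/\eqref{est-P_n-H-eta-C-delta} and by Lemma \ref{lem-Soundous} respectively --- is exactly how the corollary is actually consumed in the proof of Theorem \ref{Theorem-disc}, where it is cited jointly with Lemma \ref{lem-deter-diff-eq}. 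So your argument is correct for every use the paper makes of the statement.

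Be aware, however, that what you have proved is strictly weaker than what the corollary asserts: your Step 2 imports hypotheses (the fixed-point structure of $v^N$, Assumption $\mathcal{A}$, the pathwise bound on $\mathcal{W}_N$) that do not appear in the corollary's statement, which quantifies over all sequences satisfying only Cond.\eqref{eq-bound-v-n-l-2}. In that generality the claim appears to be unprovable --- indeed for a time-independent $v\in L^2\setminus L^4$ the limit of $y^N$ is only $A^{-\alpha/2}\partial_x v^2$, which need not lie in $C^\delta$ for $\delta$ up to $\tfrac{2\alpha-3}{2}$. The clean fix is to state the corollary with the additional hypothesis $\sup_{N}\sup_{t}|v^N(t)-y^N(t)|_{L^4}<\infty$ (or simply $\sup_{N}\sup_t|v^N(t)|_{L^4}<\infty$ obtained by bootstrap), which is precisely what your argument verifies in the application. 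This is a defect of the paper's formulation rather than of your proof, but it should be flagged rather than silently absorbed.
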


\begin{lem}\label{lem-deter-diff-eq}
Let $ \alpha\in (\frac32, 2)$ and $\delta\in (1-\frac\alpha2, \frac{\alpha-1}{2})$. We introduce the following  initial value problems
\begin{equation}\label{Pbm-v-N}
\left\{
\begin{array}{rl}
\frac{\partial}{\partial t}v^N(t) & = -A^{\alpha/2} v^N(t)+ P_NF(v^N(t)+\xi_N(t)),\\
v^N(0)& =P_Nv_0,
\end{array}
\right.
\end{equation}
where $ v_0\in L^2(0, 1)$ and  $ \xi_N :(0, T) \rightarrow C^{\delta}(0,1)$ is continuous with 
\begin{equation}\label{cond-xi-N}
\sup_{N}\sup_{[0, T]}|\xi_N(t)|_{C^{\delta}}<\infty.
\end{equation}
Assume that $F$ is given by $f$ satisfying Cond.\eqref{cond-f-g}. Then  for all $ N\in \mathbb{N}^*$, IVP.\eqref{Pbm-v-N} admits a local solution. Moreover, if $F$ satisfies Cond.\eqref{con-F-supp}, then the local solution $ v^N$ becomes global, unique  and it \del{  unique mild solution $v^N: (0, T) \rightarrow L^2(0, 1)$,} satisfies 
\begin{equation}\label{Mohamed-1}
v^N\in C(0, T; L^2(0, 1))\cap L^2(0, T; H^\frac\alpha2_2(0, 1))
\end{equation}
 and  
\begin{equation}\label{Mohamed}
\sup_{N}\sup_{t\in [0, T]} |v^N(t)|_{L^2}<\infty.
\end{equation}
In particular, this result is true for Burgers equation.
\end{lem}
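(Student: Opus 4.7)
The strategy is to exploit the fact that each approximate equation lives on the finite-dimensional invariant subspace $H_N = \mathrm{span}(e_1,\dots,e_N)$, so the problem reduces to an ODE in $\mathbb{R}^N$ for which classical theory applies, and then to promote the local solution to a global one by deriving an energy estimate that is uniform in $N$.

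\textbf{Local existence.} First I would observe that since $P_N$ has finite-dimensional range and $H_N \subset C^\delta(0,1)$, the map $w \mapsto P_N F(w + \xi_N(t))$ sends $H_N$ continuously into $H_N$ and is locally Lipschitz on $H_N$ for each $t$: indeed, on $H_N$ all norms are equivalent, and by Lemma \ref{main-nonlinear} the map $w\mapsto F(w+\xi_N(t))$ is locally Lipschitz from $C^\delta$ to $H^{-\alpha/2}_2$, then $P_N$ brings us back to $H_N$. The linear part $-A^{\alpha/2}$ is bounded on $H_N$. Hence on $H_N$ the problem \eqref{Pbm-v-N} reduces to a locally Lipschitz ODE with a continuous time dependence coming from $\xi_N$, and Cauchy--Lipschitz gives a unique local solution $v^N\in C([0,T_N^*);H_N)$ (equivalently in $C([0,T_N^*);L^2)\cap C([0,T_N^*);C^\delta)$).

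\textbf{Global a priori $L^2$-bound.} To promote locality to globality and to obtain \eqref{Mohamed}, I would test the equation against $v^N$. Since $P_N v^N = v^N$ and $\langle P_N F(v^N+\xi_N),v^N\rangle = \langle F(v^N+\xi_N),v^N\rangle$, and since $\langle A^{\alpha/2}v^N,v^N\rangle = |v^N|^2_{H^{\alpha/2}_2}$, we get
\begin{equation}
\frac{1}{2}\frac{d}{dt}|v^N(t)|^2_{L^2} + |v^N(t)|^2_{H^{\alpha/2}_2} = \langle F(v^N(t)+\xi_N(t)),v^N(t)\rangle.
\end{equation}
Applying hypothesis \eqref{con-F-supp} with $v=v^N$, $\xi=\xi_N$, and using Young's inequality (legitimate because each $\mu_j\in(0,2)$, so one can split $|v|^{\mu_j}_{H^{\alpha/2}}\cdot K_j$ as $\varepsilon |v|^2_{H^{\alpha/2}} + C_\varepsilon K_j^{2/(2-\mu_j)}$), together with $|v|_{L^2}|v|_{H^{\alpha/2}}\leq \varepsilon |v|^2_{H^{\alpha/2}} + C_\varepsilon |v|^2_{L^2}(\sum c_j|\xi|^j_{C^\delta})^2$, the $H^{\alpha/2}$-norm of $v^N$ on the right gets absorbed into the dissipative term, leaving
\begin{equation}
\frac{d}{dt}|v^N(t)|^2_{L^2} + |v^N(t)|^2_{H^{\alpha/2}_2} \leq C\bigl(1 + \Phi(|\xi_N(t)|_{C^\delta})\bigr)\bigl(1+|v^N(t)|^2_{L^2}\bigr),
\end{equation}
with $\Phi$ a polynomial. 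Using \eqref{cond-xi-N} the coefficient is bounded uniformly in $N$ and $t$, so Gronwall gives a uniform-in-$N$ bound on $\sup_{t\le T}|v^N(t)|^2_{L^2}$ together with an $L^2(0,T;H^{\alpha/2}_2)$ bound, which prevents blow-up and yields a global solution with \eqref{Mohamed} and \eqref{Mohamed-1}.

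\textbf{Uniqueness.} For two solutions $v^N_1,v^N_2$ starting from $P_N v_0$, let $w = v^N_1-v^N_2$. Using the factorisation hypothesis \eqref{cond-f-g}, $f(v^N_1+\xi_N)-f(v^N_2+\xi_N) = w\cdot g(v^N_1+\xi_N,v^N_2+\xi_N)$ with $g$ bounded on bounded sets, so $F(v^N_1+\xi_N)-F(v^N_2+\xi_N) = \partial_x(w\,G)$ with $G$ uniformly bounded on $[0,T]$ (because $v^N_1,v^N_2,\xi_N$ are uniformly bounded in $C^\delta$ — from the a priori bound for $v^N$ plus the Galerkin-space equivalence of norms, and \eqref{cond-xi-N}). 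An $L^2$-energy estimate on $w$ gives $\frac12\frac{d}{dt}|w|^2_{L^2}+|w|^2_{H^{\alpha/2}_2}\le |\langle \partial_x(wG),w\rangle|$, which after integration by parts and using $|G|_{L^\infty}$-bound is controlled by $C|w|_{L^2}|w|_{H^{\alpha/2-\varepsilon}}$; interpolation and Young absorb the higher derivative into the dissipation, leaving $\frac{d}{dt}|w|^2_{L^2}\le C|w|^2_{L^2}$, whence Gronwall with $w(0)=0$ yields $w\equiv 0$.

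The main obstacle I anticipate is the nonlinear energy estimate in the uniqueness step: the need to control $\langle \partial_x(wG),w\rangle$ in a way that produces only $|w|^2_{L^2}$ on the right, which forces an interpolation inequality and careful use of the structural condition \eqref{cond-f-g} rather than a direct local-Lipschitz estimate on $F$ (since $C^\delta$ control of $w$ is not available at the $L^2$-uniqueness level). Once this is handled, the specialisation to Burgers ($f(x)=x^2$, so $g(x,y)=x+y$ is continuous and linearly bounded) is immediate.
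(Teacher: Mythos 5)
Your proposal is correct in substance and proves everything the lemma claims, but two of its three steps take a genuinely different route from the paper. For local existence the paper runs a Banach fixed-point argument directly in $C(0,T_0;C^{\delta}(0,1))$: it shows the map $v\mapsto e^{-A^{\alpha/2}t}P_Nv_0+\int_0^te^{-A^{\alpha/2}(t-s)}P_NF(v(s)+\xi_N(s))\,ds$ is a contraction for small $T_0$, using Corollary \ref{Coro-1}, the embedding $C^{\delta}\hookrightarrow H^{1-\alpha/2}_2$ and the multiplication-algebra property; you instead observe that the problem is an ODE on the invariant finite-dimensional space $H_N$ and invoke Cauchy--Lipschitz. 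Your reduction is more elementary and gives local uniqueness for free (making your separate energy-estimate uniqueness argument logically redundant, though not wrong), while the paper's contraction argument has the advantage of producing constants and a lifespan that are tracked in the $C^{\delta}$-topology, which is the topology in which the solution is later used. Your global a priori bound is essentially identical to the paper's: testing against $v^N$, invoking Cond.\eqref{con-F-supp}, splitting with Young's inequality so the $H^{\alpha/2}_2$-terms are absorbed by the dissipation, and closing with Gronwall; the interpolation $|w|_{H^{1-\alpha/2}_2}\lesssim |w|_{L^2}^{2(\alpha-1)/\alpha}|w|_{H^{\alpha/2}_2}^{(2-\alpha)/\alpha}$ you use in the uniqueness step is exactly the one the paper deploys in its separate treatment of the Burgers case. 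One caveat applies to your uniqueness step (and, implicitly, to the paper's contraction estimate as well): the product estimate $|wG|_{H^{1-\alpha/2}_2}\lesssim |G|_{C^{\delta}}|w|_{H^{1-\alpha/2}_2}$ needs $G=g(v_1^N+\xi_N,v_2^N+\xi_N)$ to be bounded in $C^{\delta}$, whereas Cond.\eqref{cond-f-g} as stated only gives a pointwise bound on $g$; this is harmless for polynomial $f$ (where $g$ is locally Lipschitz, so $G\in C^{\delta}$ with controlled norm), but you should state that you are using this stronger property rather than the literal hypothesis.
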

\begin{proof}
\del{The solution of Pbm.\eqref{Pbm-v-N}, is given in the integral form by
\begin{equation}
v^N(t)  = e^{-A^{\alpha/2} t}P_Nv_0+\int_0^t e^{-A^{\alpha/2} (t-s)} P_NF(v^N(s)+\xi_N(s))ds.
\end{equation}}
To prove the existence of the local solution it is sufficient to prove that there exists $T_0\leq T$, such that the application $ \varphi^N: C(0, T_0; C^{\delta}(0,1)) \rightarrow  C(0, T_0; C^{\delta}(0,1))$ is welldefined and it is a contraction, where $\varphi^N$ is given by 
\begin{equation}
(\varphi^N v)(t)  = e^{-A^{\alpha/2} t}P_Nv_0+\int_0^t e^{-A^{\alpha/2} (t-s)} P_NF(v(s)+\xi_N(s))ds.
\end{equation}
In fact, let $ u, v \in C(0, T; C^{\delta}(0,1))$  such that $ |u|_{C^\delta}, |v|_{C^\delta}<R$.  Using Corollary \ref{Coro-1}, the embedding $ C^{\delta}(0,1) \hookrightarrow H^{1-\frac\alpha2}_2(0, 1)$, see Lemma   \ref{lem-ess-general}, Cond.\eqref{cond-f-g} and the fact that $C^{\delta}(0,1)$ is a multiplication algebra, we obtain 
\begin{eqnarray}
|(\varphi^N v \!\!\!\!\!&-&\!\!\!\!\!\! \varphi^N u)(t)|_{C^\delta} \leq  
\int_0^t |e^{-A^{\alpha/2} (t-s)} P_N(F(v(s)+\xi_N(s))-F(u(s)+\xi_N(s)))|_{C^\delta}ds\nonumber\\
&\leq & 
\int_0^t \Vert e^{-A^{\alpha/2} (t-s)}\Vert_{\mathcal{L}(H^{-\frac\alpha2}_2, C^\delta)} \Vert P_N \Vert_{\mathcal{L}(H^{-\frac\alpha2}_2)}|f(v(s)+\xi_N(s)) - f(u(s)+\xi_N(s))|_{H^{1-\frac\alpha2}_2}ds\nonumber\\
&\leq & 
\int_0^t (t-s)^{-\frac{1+2\delta+\alpha}{2\alpha}-}\vert(v(s)-u(s))\left( g(v(s)+\xi_N(s), u(s)+\xi_N(s))\right) \vert_{C^\delta}ds\nonumber\\
&\leq & 
\int_0^t (t-s)^{-\frac{1+2\delta+\alpha}{2\alpha}-}\vert(v(s)-u(s))|_{C^\delta}|\left( g(v(s)+\xi_N(s), u(s)+\xi_N(s))\right) \vert_{C^\delta}ds\nonumber\\
\del{&\leq & 
\int_0^t (t-s)^{-\frac{1+2\delta+\alpha}{2\alpha}-}|f(v(s)+\xi_N(s)) - f(u(s)+\xi_N(s))|_{C^\delta}ds\nonumber\\
&\leq & 
\int_0^t (t-s)^{-\frac{1+2\delta+\alpha}{2\alpha}-}   |v(s)-u(s)|_{C^\delta}|g|(|v(s)|_{C^\delta}+|\xi_N(s))|_{C^\delta}, |u(s)|_{C^\delta}+|\xi_N(s))|_{C^\delta})ds\nonumber\\}
&\leq &  
C_R\sup_{s\in [0,1]}|v(s)-u(s)|_{C^\delta} \int_0^t (t-s)^{-\frac{1+2\delta+\alpha}{2\alpha}-}ds
 \leq  T^{1-\frac{1+2\delta+\alpha}{2\alpha}} C_R\sup_{s\in [0,1]}|v(s)-u(s)|_{C^\delta}.\nonumber\\
\end{eqnarray}
Than, we choose $T_0$, such that $C_RT_0^{1-\frac{1+2\delta+\alpha}{2\alpha}} < 1.
$
\del{\begin{equation}
C_RT_0^{1-\frac{1+2\delta+\alpha}{2\alpha}} < 1.
\end{equation} }
\noindent Now, we prove that there exists $C_T>0$, such that for any  solution of  IVP\eqref{Pbm-v-N} on $[0, T_0]$, we have 
\begin{equation}
\sup_{N}\sup_{t\in [0, T_0]} |v^N(t)|_{L^2}<\infty.
\end{equation}
This last condition is sufficient to guaranty the global existence of the solution.
In fact, we multiply the two sides of the first equation in IVP\eqref{Pbm-v-N} by $ v^N$ and we use \cite{Temam-NSE} and we integrate, we get 
\del{\begin{eqnarray}\label{Pbm-v-N-Temam}
\frac{\partial}{\partial t}|v^N(t)|_{L^2}^2  &=& -2\langle A^{\alpha/2} v^N(t), v^N(t)\rangle + 2\langle  P_NF(v^N(t)+\xi_N(t)), v^N(t) \rangle.
\end{eqnarray}
Thus, }
 \begin{eqnarray}\label{Pbm-v-N-Temam}
|v^N(t)|_{L^2}^2  &+ &  2\int_0^t |v^N(s)|^2_{H^{\frac\alpha2}_2} ds 
 =  |v^N(0)|_{L^2}^2 + 2 \int_0^t \langle  F(v^N(s)+\xi_N(s)), P_Nv^N(s)\rangle ds.\nonumber\\
 \end{eqnarray}
We use Cond.\eqref{con-F-supp}, Young inequality with $\epsilon_1, \epsilon_2>0$ and Lemma \ref{Lem-Est-1-PN}, we obtain
\begin{eqnarray}
|v^N(t)|_{L^2}^2  &+ &  2\int_0^t |v^N(s)|^2_{H^{\frac\alpha2}_2} ds \leq |P_Nv_0|_{L^2}^2 + \sum_{j=1}^m c_j \int_0^t |v^N(s)|^{\mu_j}_{H^\frac\alpha2_2}|\xi_N(s)|^j_{C^\delta}ds
   \nonumber\\
 & +& \int_0^t|v^N(s)|_{L^2} |v^N(s)|_{H^\frac\alpha2_2} (\sum_{j=1}^m c_j|\xi_N(s)|^j_{C^\delta})ds\nonumber\\
&\leq & |v_0|_{L^2}^2 + m \epsilon_1 \int_0^t |v^N(s)|^2_{H^\frac\alpha2_2}ds + \frac1{\epsilon_1}(\sum_{j=1}^m c_j\int_0^t |\xi_N(s)|^{\frac2{2-\mu_j}}_{C^\delta}ds)   \nonumber\\
 & +& \epsilon_2\int_0^t |v^N(s)|^2_{H^\frac\alpha2_2}ds + \frac1{\epsilon_2} \int_0^t |v^N(s)|_{L^2}^2 (\sum_{j=1}^m c_j|\xi_N(s)|^j_{C^\delta})^2ds.
\end{eqnarray}
We choose $\epsilon_1,  \epsilon_2>0$ such that $m\epsilon_1+ \epsilon_2<2$ and we use Cond. we end up with
\begin{eqnarray}\label{Pbm-v-N-Temam-gen}
|v^N(t)|_{L^2}^2  +  (2- m \epsilon_1- \epsilon_2)\int_0^t |v^N(s)|^2_{H^{\frac\alpha2}_2} ds &\leq &  |v_0|_{L^2}^2 +\frac1{\epsilon_1}T(\sum_{j=1}^m c_j\sup_{N}\sup_{s} |\xi_N(s)|^{\frac2{2-\mu_j}}_{C^\delta})\nonumber\\
&  +&  \frac1{\epsilon_2} sup_{N}\sup_{s}(\sum_{j=1}^m c_j|\xi_N(s)|^j_{C^\delta})^2)\int_0^t |v^N(s)|_{L^2}^2 ds
\nonumber\\
 &\leq & C_1+C_2\int_0^t |v^N(s)|_{L^2}^2 ds.
\end{eqnarray}
In particular, as the first term in the LHS of Est.\eqref{Pbm-v-N-Temam-gen} is bounded by the RHS of Est.\eqref{Pbm-v-N-Temam-gen} and \del{\begin{eqnarray}\label{Pbm-v-N-Temam-gen}
|v^N(t)|_{L^2}^2  
 &\leq & C_1+C_2\int_0^t |v^N(s)|_{L^2}^2 ds.
\end{eqnarray}}and by application of Gronwall lemma, we deduce that $|v^N(t)|_{L^2}^2  \leq  C_1e^{ C_2T}$ and consequently that $ \sup_N\int_0^t |v^N(s)|^2_{H^{\frac\alpha2}_2} ds<\infty$. Thus, conditions \eqref{Mohamed-1} \& \eqref{Mohamed}  are fulfilled.
\del{\begin{eqnarray}
|v^N(t)|_{L^2}^2  & \leq & C_1e^{ C_2T}. 
\end{eqnarray}}

\vspace{0.15cm}

To more clarify that our study covers the fractional stochastic Burgers equation, we independently develop bellow this later case. In fact, using the fact that  for all $y$, we have $\langle  \partial_x y^2, y\rangle =0$,  \cite[Lemma 11]{BrzezniakDebbi1}, the embedding $C^{\delta}(0,1)\hookrightarrow  H^{1-\frac\alpha2}_2(0, 1)$, see  Lemma \ref{lem-ess-general}, and the fact that $C^{\delta}(0,1)$ is a multiplication algebra, we get
 \begin{eqnarray}\label{Pbm-v-N-Temam}
|v^N(t)|_{L^2}^2  &+ &  2\int_0^t |v^N(s)|^2_{H^{\frac\alpha2}_2} ds 
 =  |v^N(0)|_{L^2}^2 + 2 \int_0^t \langle  F(v^N(s)+\xi_N(s)), P_Nv^N(s)\rangle ds \nonumber\\
&\leq &
|v^N(0)|_{L^2}^2 + 2 \int_0^t \left( |\langle  \partial_x (v^N(s))^2, v^N(s)\rangle| +|\langle  \partial_x (\xi_N(s))^2, v^N(s)\rangle|\right)ds \nonumber \\ 
& + & 2 \int_0^t|\langle  \partial_x (v^N(s)\xi_N(s)), v^N(s)\rangle|  ds \nonumber\\
&\leq &
|v^N(0)|_{L^2}^2 + \int_0^t |\xi_N(s))^2|_{H^{1-\frac\alpha2}_2} |v^N(s)|_{H^\frac\alpha2_2} ds+ 4 \int_0^t |v^N(s)|_{H^\frac\alpha2_2}|v^N(s)\xi_N(s))|_{H^{1-\frac\alpha2}_2}ds \nonumber\\
&\leq &
|P_Nv(0)|_{L^2}^2 + \int_0^t |\xi_N(s))|^2_{C^\delta} |v^N(s)|_{H^\frac\alpha2_2} ds+ 4 \int_0^t |v^N(s)|_{H^\frac\alpha2_2}|v^N(s)|_{H^{1-\frac\alpha2}_2} |\xi_N(s))|_{C^\delta}ds.\nonumber \\
& &
\end{eqnarray}
Using the following interpolation $ |v^N(s)|_{H^{1-\frac\alpha2}_2}\leq c |v^N(s)|^{2\frac{\alpha-1}{\alpha}}_{L^2}|v^N(s)|^{\frac{2-\alpha}{\alpha}}_{H^{\frac\alpha2}_2}$, Young inequality, Lemma \ref{Lem-Est-1-PN}  and Cond.\eqref{cond-xi-N}, \del{that $ \sup_{N}\sup{s\in [0, T]}|\xi_N(s))|^2_{C^\delta}<\infty$,} we deduce that 
 \begin{eqnarray}
|v^N(t)|_{L^2}^2  &+ &  2\int_0^t |v^N(s)|^2_{H^{\frac\alpha2}_2} ds 
\leq  |v_0|_{L^2}^2 + \int_0^t (\frac{1}{\epsilon_1}|\xi_N(s))|^4_{C^\delta} + \epsilon_1^2|v^N(s)|^2_{H^\frac\alpha2_2}) ds\nonumber\\
&+& 4c \int_0^t |v^N(s)|^\frac2\alpha_{H^\frac\alpha2_2}|v^N(s)|^{2\frac{\alpha-1}{\alpha}}_{L^2} |\xi_N(s))|_{C^\delta}ds\nonumber\\
& \leq &  |v_0|_{L^2}^2 + \int_0^t (\frac{1}{\epsilon_1}|\xi_N(s))|^4_{C^\delta} + \epsilon_1^2|v^N(s)|^2_{H^\frac\alpha2_2}) ds\nonumber\\
&+& 4c \int_0^t ({\epsilon_2}|v^N(s)|^2_{H^\frac\alpha2_2} + \frac{1}{\epsilon_2}(|v^N(s)|^{2}_{L^2} |\xi_N(s))|_{C^\delta}^{2} ds\nonumber\\
& \leq &  |v_0|_{L^2}^2  + \frac{Tc}{\epsilon_1}+(\epsilon_1^2+4c\epsilon_2)\int_0^t |v^N(s) ^2_{H^\frac\alpha2_2}) ds + \frac{4c}{\epsilon_2} \int_0^t |v^N(s)|^{2}_{L^2} ds. 
\end{eqnarray}
the choice of $ \epsilon_1 $ and $ \epsilon_2 $ such that $ \epsilon_1^2+4c\epsilon_2 \leq 2 $ gives us
\begin{eqnarray}
|v^N(t)|_{L^2}^2  +  (2- \epsilon_1^2-4c\epsilon_2) \int_0^t |v^N(s)|^2_{H^{\frac\alpha2}_2} ds & \leq & (|v_0|_{L^2}^2 + \frac{Tc}{\epsilon_1})+ \frac{4c}{\epsilon_2} \int_0^t |v^N(s)|^{2}_{L^2} ds.\nonumber \\
& &
\end{eqnarray}
Now, arguing as above and we apply Gronwall lemma, we infer that 
\begin{eqnarray}
|v^N(t)|_{L^2}^2  & \leq & (|v_0|_{L^2}^2 + \frac{Tc}{\epsilon_1})e^{ \frac{4c}{\epsilon_2}T}. 
\end{eqnarray}
Thus, Cond.\eqref{Mohamed} is fulfilled and consequently, $ v^N\in L^2(0, T; H^\frac\alpha2_2(0, 1))$. Thus the proof is achieved.
\end{proof}


\section{Proof of Theorems}\label{sec-Proofs}

\subsection{Proof of Theorem \ref{Theorem-disc}}
\vspace{0.35cm}

{ \bf Existence.} We understand Equation \eqref{Discr-Galerkin-FSBE-Evol-1} in the integral form as 
\begin{equation}\label{Discr-Galerkin-FSBE-Evol-1-Integ}
u_N(t)= e^{- tA^{\alpha/2}}P_Nu_0 + \int_0^te^{-A^{\alpha/2} (t-s)}P_NF(u_N(s))ds+ \mathcal{W}_N(t), \;\; t\in [0, T],\\
\end{equation}
where $\mathcal{W}_N(t)$ is given by \eqref{OU-process-W-N}.\del{$ \mathcal{W}_N(t):=\int_0^te^{-A^{\alpha/2} (t-s)}W_N(dt)$.}
Remark that if $ u_N$ is solution of Eq.\eqref{Discr-Galerkin-FSBE-Evol-1}, then $ v_N:= u_N-\mathcal{W}_N$ is a mild solution of the pathwise IVP.\eqref{Pbm-v-N}, with $\xi_N$ by $\mathcal{W}_N$ and vice versa.  To prove the existence of the solution $ u_N$ satisfying  Eq.\eqref{Discr-Galerkin-FSBE-Evol-1-Integ} and Cond.\eqref{Mohamed}, we apply Lemma \ref{lem-Soundous} and 
Lemma \ref{lem-deter-diff-eq}. 
\del{We solve Eq.  \eqref{Discr-Galerkin-FSBE-Evol-1} in pathwise form. It is quiet standard to prove that Eq.  \eqref{Discr-Galerkin-FSBE-Evol-1} admits a local solution, see e.g. \cite{BrzrezniakDebbi1, Daprato,  Debbi-scalar-active1, }. To prove that the local solution is global, we prove that it is globally bounded, i.e.

It is easy to prove that Eq. \eqref{Discr-Galerkin-FSBE-Evol-1} admits a local solution $ u_N^n:= (u_N^n(t), t\in [0, T])$, see e.g.  To prove that this solution is global we have to prove that $ \sup_n$

\vspace{0.35cm}

 The existence and the uniqueness of the solution is guaranted thanks to Theorem \cite{Debbi-scalar-active1}.} 

\vspace{0.35cm}

{ \bf Uniform boundedness of $(u_N)_N$.} Now, we assume that $ \frac74<\alpha< 2$, $\delta\in (1-\frac{\alpha}{2}, \frac{2\alpha-3}{2})$ and we prove that the solutions $(u_N)_N$ satisfy Est.\eqref{Eq-Assum-4-1-u-N}. In fact, uisng Identity \eqref{Discr-Galerkin-FSBE-Evol-1-Integ}, it is obvious that  
\begin{equation}\label{eq-est-saparate-u-N}
|u_{N}(t)|_{C^\delta} \leq |e^{-A^{\alpha/2}t}P_{N}u_{0}|_{C^\delta} + |\int_{0}^{t} e^{-A^{\alpha/2}(t-s)}P_{N}F(u_{N}(s))ds|_{C^\delta}+|\mathcal{W}_{N}(t)|_{C^\delta}.
\end{equation} 
Let us remark that the second term respectively the third one in Est.\eqref{eq-est-saparate-u-N}, are bounded thanks to Corollary \ref{coro-y-N-c-delta} and Lemma \ref{lem-deter-diff-eq} respectively to Lemma \ref{lem-Soundous}. To estimate the first term in Est.\eqref{eq-est-saparate-u-N}, we use Lemma \ref{Prop-sg-1}, Lemma \ref{Lem-Est-1-PN} and Assumption $\mathcal{A}$, we get
\begin{equation}
|e^{-A^{\alpha/2}t}P_{N}u_{0}|_{C^\delta} \leq \Vert e^{-A^{\alpha/2}t} \Vert_{\mathcal{L}(H^{\beta}_{2},C^\delta) }\Vert P_{N} \Vert_{\mathcal{L}(H^{\beta}_{2})}|u_{0}|_{H^{\beta}_{2}} \leq C_{\alpha, \delta, \beta}|u_{0}|_{H^{\beta}_{2}}<\infty.
\end{equation}

The proof is then achieved.
\vspace{0.35cm}

{\bf H\"older Regularity of $u_N$.} We prove that each term of 
Identity \eqref{Discr-Galerkin-FSBE-Evol-1-Integ} is H\"older continuous of index $ \frac{\alpha-1-2\delta}{2\alpha}$. In fact, the regularity of $\mathcal{W}_N$ follows from Lemma \ref{auxiliary-3}. To get the regularity of the first term, we use Corollary \ref{coro-sg-regul}, Lemma \ref{Lem-Est-1-PN}, the embedding $ H^{\beta}_2(0, 1) \hookrightarrow H^{-\frac\alpha2}_2(0, 1)$ and the Assumption $\mathcal{A}$. Then,  for $ \tau< t\in (0, T)$, we have
 \begin{eqnarray}\label{eq-Reg-1-term}
 | (e^{-A^{\alpha/2} t}- e^{- A^{\alpha/2} \tau})P_N u_0|_{C^\delta}&\leq & C\Vert e^{-A^{\alpha/2} t}- e^{-A^{\alpha/2} \tau} \Vert_{\mathcal{L}(H^{-\frac\alpha2}_2, C^\delta)} \Vert  P_N \Vert_{\mathcal{L}(H^{-\frac{\alpha}{2}}_{2})} |u_0|_{H^{-\frac\alpha2}_2} \nonumber\\
 &\leq & C_{\alpha, \delta, \tau} |t-\tau|^{(\frac{\alpha-1-2\delta}{2\alpha})-}| u_0|_{H^{\beta}_2}.
 \end{eqnarray}
For the second term, we have,
\begin{eqnarray}
|\int_0^t &{}&\!\!\!\!\! \!\!\!\! \!\!\!\!\!e^{-A^{\alpha/2} (t-s)}P_NF(u_N(s))ds - \int_0^\tau e^{-A^{\alpha/2} (\tau -s)}P_NF(u_N(s))ds|_{C^\delta}\nonumber\\
&\leq & |\int_0^\tau [e^{-A^{\alpha/2} (t-s)}- e^{-A^{\alpha/2} (\tau-s)}]P_NF(u_N(s))ds|_{C^\delta}+  |\int_\tau^t e^{-A^{\alpha/2} (t-s)}P_NF(u_N(s))ds|_{C^\delta}\nonumber\\
&\leq & \int_0^\tau \Vert e^{-A^{\alpha/2} (t-s)}- e^{-A^{\alpha/2} (\tau-s)}\Vert_{\mathcal{L}(H^{-\frac\alpha2}_2, C^\delta)}\Vert P_N \Vert_{\mathcal{L}(H^{-\frac\alpha2}_2)}|F(u_N(s))|_{H^{-\frac\alpha2}_2}ds\nonumber\\
&+&  \int_\tau^t \Vert e^{-A^{\alpha/2} (t-s)}\Vert_{\mathcal{L}(H^{-\frac\alpha2}_2, C^\delta)} \Vert P_N \Vert_{\mathcal{L}(H^{-\frac\alpha2}_2)}|F(u_N(s))|_{H^{-\frac\alpha2}_2}ds.\nonumber\\
\end{eqnarray}
Using Lemma \ref{lem-sg-regul} and Lemma \ref{Lem-Est-1-PN}, we infer that, for $ \epsilon_1, \epsilon_2 \in (0, \frac{\alpha-1-2\delta}{2\alpha})$, 
\begin{eqnarray}
|\int_0^t &{}&\!\!\!\!\! \!\!\!\! \!\!\!\!\!e^{-A^{\alpha/2} (t-s)}P_NF(u_N(s))ds - \int_0^\tau e^{-A^{\alpha/2} (\tau -s)}P_NF(u_N(s))ds|_{C^\delta}\nonumber\\
&\leq & C_{\alpha}\int_0^\tau (t-\tau)^{\frac{\alpha-1-2\delta}{2\alpha}-\epsilon_1} (\tau-s)^{-1+\epsilon_2}|F(u_N(s))|_{H^{-\frac\alpha2}_2} ds\nonumber\\
&+&  C_{\alpha}\int_\tau^t (t-s)^{-1+\epsilon_2}|F(u_N(s))|_{H^{-\frac\alpha2}_2} ds.
\end{eqnarray}
Let us now, remark that thanks to the uniform boundedness of $(u_N)_N$ with respect to $t$ and $N$, we can choose $R(\omega)= \sup_N\sup_{t\in[0,T]}|u_N(t, \omega)|_{C^\delta}$, for $a.s. \omega \in \Omega$ and by Application of Corollary \ref{Coro-linear-growth}, we infer the existence of a random variable $C_{F, \alpha, \delta}(\omega)$, such that 
\begin{equation}\label{cond-lin-growth-univ}
|F(u_N(s, \omega))|_{H^{-\frac\alpha2}_2}\leq C_{F, \alpha, \delta}(\omega)(1+|u_N(s, \omega)|_{C^\delta})\leq C_{F, \alpha, \delta}(\omega).
\end{equation}
Thus
\begin{eqnarray}
|\int_0^t &{}&\!\!\!\!\! \!\!\!\! \!\!\!\!\!e^{-A^{\alpha/2} (t-s)}P_NF(u_N(s))ds - \int_0^\tau e^{-A^{\alpha/2} (\tau -s)}P_NF(u_N(s))ds|_{C^\delta}\nonumber\\
&\leq & C_{F, \alpha, \delta}(\cdot)\big[\int_0^\tau (t-\tau)^{\frac{\alpha-1-2\delta}{2\alpha}-\epsilon_1} (\tau-s)^{-1+\epsilon_2} ds +\int_\tau^t (t-s)^{-1+\epsilon_2}ds\big]\nonumber\\
&\leq & C_{F, \alpha, \delta, T} \big[ (t-\tau)^{\frac{\alpha-1-2\delta}{2\alpha}-\epsilon_1} + (t-\tau)^{\epsilon_2} \big].
\end{eqnarray}
Now, it is easy to get the H\"older index, by taking $\epsilon_1 \rightarrow 0$ and  $\epsilon_2 \rightarrow \frac{\alpha-1-2\delta}{2\alpha}$.

\vspace{0.35cm}

{\bf Uniqueness.} Assume that there exist two solution $u_N^1$  and $u_N^2$ two solutions of Eq.\eqref{Discr-Galerkin-FSBE-Evol-1-Integ} starting from the same initial condition $u_0$ and satisfying the boundedness, the regularity peroperties above, then using Corollary \ref{Coro-1}\del{Lemma \ref{Prop-sg-1}}, the boundedness property of $u^1_N$ and $u^2_N$ and Lemma \ref{main-nonlinear}, we obtain $P-a.s.$, for all $t\in (0, T)$
\begin{eqnarray}
|u^1_N(t, \omega)-u^2_N(t, \omega)|_{C^\delta}&\leq &  \int_0^t|e^{-A^{\alpha/2} (t-s)}P_N(F(u^1_N(s, \omega))- F(u^2_N(s, \omega)))|_{C^\delta}ds\nonumber\\
&\leq & \int_0^t\Vert  e^{-A^{\alpha/2} (t-s)}\Vert_{\mathcal{L}(H^{-\frac\alpha2}_2, C^\delta)} \Vert P_N \Vert_{\mathcal{L}(H^{-\frac\alpha2}_2)}|F(u^1_N(s, \omega))- F(u^2_N(s, \omega))|_{H^{-\frac\alpha2}_2}ds\nonumber\\
&\leq &  C_{F, \alpha, \delta}(\omega)\int_0^t(t-s)^{-(\frac{1+2\delta+\alpha}{2\alpha})-}|u^1_N(s, \omega)- u^2_N(s, \omega)|_{C^\delta}ds.
\end{eqnarray}
By application of Gronwall lemma we get, $P-a.s.$,  $\forall t\in (0, T)$, $|u^1_N(t, \omega)-u^2_N(t, \omega)|_{C^\delta}=0$. Thus the uniqueness is proved.


\subsection{Proof of Theorems \ref{Main-Theorem-Exit-Uniq} \& \ref{main-result-Galerkin-approx}}
To prove theorems \ref{Main-Theorem-Exit-Uniq} \& \ref{main-result-Galerkin-approx}, we will mainly check that  assumptions 1-4 of Theorem \ref{Theorem-Blomker-Jentzen} hold, with $ V:=C^{\delta}(0,1)$, $ U:= H^{-\alpha/2}_2(0, 1)$ and $ S(t)=e^{-A^{\alpha/2} t}$. 

\vspace{0.25cm}
{\bf Assumption 1.}  Lemma \ref{lem-sg-regul} states that for $\alpha\in (1, 2]$ and $\delta\in [0, 1)$ the semigroup $e^{-A^{\alpha/2} \cdot}: [0, T]\rightarrow \mathcal{L}(H^{-\frac\alpha2}_2(0, 1), C^{\delta}(0,1))$ is H\"olderian so it is continuous. Moreover, for $\delta<\frac{\alpha-1}{2}$ and thanks to Corollary  \ref{Coro-1}, we have for all $ \eta' \in ( \frac{1+2\delta+\alpha}{2\alpha}, 1)$, 
\begin{equation}\label{Eq-Assum-1-1-app}
\sup_{t\in (0, T]}\big( t^{\eta'} \Vert e^{-A^{\alpha/2} t} \Vert_{\mathcal{L}(H^{-\alpha/2}_2, C^\delta)}\big)<\infty. 
\end{equation}
\del{To prove Cond.\eqref{Eq-Assum-1-2},}Now, we introduce the auxaliary parameter $ \beta \in (\frac{\alpha}{2}, \alpha-\delta-\frac{1}{2})$. Thanks to Lemma \ref{Prop-sg-1} and Lemma \ref{Lem-Est-1-PN}, we infer that for all $\eta''\in ( \frac{1+2\delta+2\beta}{2\alpha}, 1)$, there exists
 $C_{\delta, \beta, \eta''}>0$, s.t. 
\begin{eqnarray}\label{Eq-Assum-1-2-app-1}
\Vert (1-P_N)e^{-A^{\alpha/2} t}\Vert_{\mathcal{L}(H^{-\frac{\alpha}{2}}_2, C^\delta)} &=& \Vert e^{-A^{\alpha/2} t}(1-P_N)\Vert_{\mathcal{L}(H^{-\frac{\alpha}{2}}_2, C^\delta)}\nonumber\\
&\leq &
 \Vert e^{-A^{\alpha/2} t}\Vert_{\mathcal{L}(H^{-\beta}_2, C^\delta)} \Vert 1-P_N \Vert_{\mathcal{L}(H^{-\frac{\alpha}{2}}_2, H^{-\beta}_2)}\nonumber\\
& \leq & C_{\delta, \beta, \eta''} t^{-\eta''}N^{-(\beta-\frac{\alpha}{2})}. 
\end{eqnarray}
We consider  $ \beta=(\alpha-\delta-\frac12)-$ and we take $ \eta'=\eta'':= 1-\epsilon$, with $ \epsilon \in (0, \frac{\alpha-1-2\delta}{2\alpha})$, then we get the estimate
\begin{eqnarray}\label{Eq-Assum-1-2-app}
\Vert(1-P_N)e^{-A^{\alpha/2} t}\Vert_{\mathcal{L}(H^{-\frac{\alpha}{2}}_2, C^\delta)} & \leq & C_{\delta, \beta, \eta} t^{-1+\epsilon}N^{-(\frac{\alpha-1}{2}-\delta)+}. 
\end{eqnarray}
Consequently \del{for $ \eta\in (\max\{\eta', \eta''\}, 1)$, or in other word,   $\eta\in ( \max\{\frac{1+2\delta+2\beta}{2\alpha}, \frac{1+2\delta+\alpha}{2\alpha}\}, 1)$}
Assumption 1 is satisfied.

\vspace{0.25cm}
{\bf Assumption 2.} This assumption is satisfied,  for the fractional stochastic Burgers type equations,  thanks to Corollary \ref{Coro-nolinearterm}, provided that   $1< \alpha\leq 2$ and $\delta>1-\frac{\alpha}{2}$.

Consequently, assumptions 1 \& 2 are simultaneously satisfied for  $3/2< \alpha\leq 2$ and $\delta \in ( 1-\frac{\alpha}{2}, \frac{\alpha-1}{2})$, see also Corollary \ref{Coro-nolinearterm+1}.

\vspace{0.25cm}

{\bf Assumption 3.}  Corollary \ref{Coro-Reg-W} shows the continuity of the process $\mathcal{W}$. Moreover, it is easy to see tha Est.\eqref{eq-Reg-1-term} is still valid without $P_N$. Thus, the process  $ e^{-A^{\alpha/2}t}u_{0}: \Omega \rightarrow C^{\delta}(0,1)$  and consequently    the process $O(t):= e^{-tA^{\alpha/2}}u_0+ \mathcal{W}(t): \Omega \rightarrow C^{\delta}(0,1)$ are continuous. In addition, thanks to  Lemma \ref{Lem-Est-1-PN}, the fact that $\alpha<2+\delta$, Assumption $\mathcal{A}$  and the embeddings $ H^{\beta}_{2}(0, 1) \hookrightarrow H^{\delta +\frac{1}{2}}(0, 1)  \hookrightarrow  H^{\alpha/2}_{2}(0, 1)$, we have $P-a.s.$
\begin{equation}\label{u-oassump-3}
|(1-P_N)e^{-A^{\alpha/2} t}u_0(\omega)|_{C^{\delta})}\leq 
\Vert(1-P_N)e^{-A^{\alpha/2} t}\Vert_{\mathcal{L}(H^{\alpha/2}_2, C^{\delta})}|u_0(\omega)|_{H^{\alpha/2}_2}\leq 
C_{\alpha, \delta}(\omega)N^{-(\frac{\alpha-1}{2}-\delta)+}|u_0(\omega)|_{H^{\beta}_2}.
\end{equation}
Now,  Corollary \ref{Coro-lem-2} and Est.\eqref{u-oassump-3} together  show that $O(t)$ satisfies $P-a.s.$ 
 \begin{equation}\label{Eq-Assum-3-O-final}
\sup_{N\in \mathbb{N}}\sup_{t\in (0, T]}\big(N^{(\frac{\alpha-1}{2}-\delta)-}|(1-P_N)O(t, \omega)|_{C^\delta}\big)<\infty.\del{ \;\; \text{for every}\; \omega.} 
\end{equation}
Thus Assumption3 is fulfilled.

 \del{we prove that for $ \alpha \in (3/2, 2]$, $\delta \in ( 1-\frac{\alpha}{2}, \frac{\alpha-1}{2})$ and  $u_0\in H^{-(\delta+\frac12)}_2$, we have almost surely,
\begin{equation}\label{Eq-Assum-3-proo-both}
\sup_{N\in \mathbb{N}}\sup_{t\in (0, T]}\big(N^{(\frac{\alpha-1}{2} -\delta)-}|(1-P_N)(e^{-tA^{\alpha/2}}u_0(\omega)+ \mathcal{W}(\omega, t)|_{C^\delta}\big)<\infty. 
\end{equation}
In the first step, we prove, for $ \alpha \in (3/2, 2]$, $\delta \in ( 1-\frac{\alpha}{2}, \frac{\alpha-1}{2})$ and  $p_0>\frac{2(\alpha+1)}{\alpha-1-2\delta}$ fixed, that almost surely,
\begin{equation}\label{Eq-Assum-3-proo}
\sup_{N\in \mathbb{N}}\sup_{t\in (0, T]}\big(N^{[(\alpha-1)(\frac1{2}-\frac1{p_0}) -\delta]-} \del{t^\alpha}|(1-P_N) \mathcal{W}(\omega, t)|_{C^\delta}\big)<\infty. 
\end{equation}
In fact, using Lemma \ref{lem-est-stoch-main} and Lemma  \ref{appendix-A-4}, we deduce, for $ \alpha \in (1, 2]$, $0<\beta<\frac{\alpha-1}2$, $q\geq 2$ and  $p_0>\frac{2\alpha}{\alpha-1-2\beta}$ be fixed, that almost surely,
\begin{equation}\label{Eq-Assum-3-proo-beta}
\sup_{N\in \mathbb{N}}\sup_{t\in (0, T]}\big(N^{(\frac{\alpha-1}{2}-(\beta+\frac \alpha {p_0}))-} \del{t^\alpha}|(1-P_N)\mathcal{W}(\omega, t)|_{H^\beta_q}\big)<\infty. 
\end{equation}
Moreover, it is easy to see that Est.\eqref{Eq-Assum-3-proo-beta}, is also valid for the special case; $\delta < \frac{\alpha-1}{2}$, $ p_0>\frac{2(\alpha+1)}{\alpha-1-2\delta}$, $ \beta = \delta+\frac 1{p_0}$ and $q=p_0$. Thanks to the Imbedding $ H^{\delta+\frac1{p_0}}_{p_0} \hookrightarrow C^\delta$, we get 
Est. \eqref{Eq-Assum-3-proo}. In addition, as $ \delta>1-\frac\alpha2$, we have to choose $p_0$ such that $ 1-\frac\alpha2<\frac{\alpha-1}{2}-\frac1{p_0}$, which means  that $ \alpha>3/2$ and $ p_0>\max\{\frac{2(\alpha+1)}{\alpha-1-2\delta}, \frac1{\alpha-3/2}\}$. Now, for $p_0$ so large, the power of $N$ in \eqref{Eq-Assum-3-proo-beta} will be replaced by $ (\frac{\alpha-1}{2}-\delta)-$.

In the second step, we prove for $ \alpha \in (1, 2]$, $0\leq \delta< \frac{\alpha-1}{2})$ and $ u_0: \Omega \rightarrow H_2^{-\delta-\frac12}$, that there exists $ C_{\alpha, \delta}>0$, such that  
\begin{equation}\label{Eq-Assum-3-proo-u-0}
\sup_{N\in \mathbb{N}}\sup_{t\in (0, T]}\big(N^{(\frac{\alpha-1}{2} -\delta)-} \del{t^\alpha}|(1-P_N)e^{-tA^{\alpha/2}}u_0|_{C^\delta}\big)<\infty. 
\end{equation}
By application of lemmas \ref{Prop-sg-1} \& \ref{Lem-Est-1-PN}, we infer the existence of $ C_{\alpha, \delta}>0$, such that,  for almost all $ \omega$,
\begin{eqnarray}
|(1-P_N)e^{-tA^{\alpha/2}}u_0|_{C^\delta} &=& |e^{-tA^{\alpha/2}}(1-P_N)u_0|_{C^\delta} \nonumber\\
&\leq &  |e^{-tA^{\alpha/2}}|_{\mathcal{L}(H^{-\alpha/2}_2, C^\delta)} |1-P_N|_{\mathcal{L}(H^{-\alpha/2}_2, H^{(-\delta-\frac12)-}_2)} |u_0|_{ H^{(-\delta-\frac12)-}_2}\nonumber\\
&\leq & C_{\alpha, \delta} N^{(\frac{\alpha-1}{2}-\delta)-}|u_0|_{ H^{(-\delta-\frac12)-}_2}.
\end{eqnarray}}


\vspace{0.35cm}

{\bf Assumption 4.} This assumption is satisfied thanks to Theorem \ref{Theorem-disc}.

\subsection{Proof of Lemma \ref{u-N-M-bounded}}\label{subsec-proof-lem}

Using Eq.\eqref{eq-u-N-M-m}, we rewrite $u_{N, M}^m$ as 
\begin{equation}\label{eq-u-N-M-2}
\left\{
\begin{array}{rl}
u_{N, M}^{0}& := P_N u_0,\\ 
u_{N,M}^{m}& = e^{-A^{\alpha/2}t_{m}} u_{N,M}^0 +  \int_0^{t_m} e^{-A^{\alpha/2}(t_{m}-s)}  \Delta t \sum_{k=0}^{m-1} \delta_{t_k}(s)P_N F(u_{N,M}^{k})ds + \mathcal{W}_N(t_m).    
\end{array}
\right.
\end{equation}
We introduce the following equation:

\begin{equation}\label{eq-Z-N-M}
\left\{
\begin{array}{rl}
Z_{N, M}(0)& := P_N u_0,\\
for \;\; & t\in (t_{m-1}, t_m]:\\
Z_{N,M}(t)& = e^{-A^{\alpha/2}t}  P_N u_0 +  \int_0^t e^{-A^{\alpha/2}(t-s)} \Delta t \sum_{k=0}^{m-1}\delta_{t_k}(s)P_N F(Z_{N,M}(s) + \mathcal{W}_N(t_m))ds.   
\end{array}
\right.
\end{equation}
We argue as in  the proof of Lemma \ref{lem-deter-diff-eq} using Lemma \ref{lem-Soundous}, we prove the existence of a stochastic process $ Z_{N, M}$ solution of Eq.\eqref{eq-Z-N-M} and satisfying $ \sup_{N, M}\sup_{t\in [0, T]}|y_{N, M}(t, \omega)|_{C^{\delta}}\leq C_{\alpha, \delta}(\omega)$. 
It is easy to see that $ Z_{N, M}(t_m) =  u_{N, M}^m-\mathcal{W}_N(t_m)$, where $u_{N, M}^m$ is solution of Eq.\eqref{eq-u-N-M-2}. Now, argue as in the proof of  Theorem \ref{Theorem-disc}, we infer that $u_{N, M}^m$ exists and fulfill Est.\eqref{eq-u-m-N-M-bounded}.\del{\begin{equation}
\sup_{m, N, M}|u^m_{N, M}(\omega)|\leq C_{\alpha, \delta}(\omega)
\end{equation}}


\del{\begin{proof}
 We rewrite $u_{N, M}^m$ as 
\begin{equation}
\left\{
\begin{array}{rl}
u_{N, M}^{0}& := P_N u_0,\\ 
u_{N,M}^{m}& := e^{-A^{\alpha/2}t_{m}} u_{N,M}^0 + \Delta t \sum_{k=0}^{m-1}e^{-A^{\alpha/2}(t_{m}-t_{k})} P_N F(u_{N,M}^{k}) + \mathcal{W}_N(t_m).    
\end{array}
\right.
\end{equation}
We use a reccurent proof. By definition we have $ u_{N, M}^0:= P_Nu_0$, using Lemma \ref{Lem-Est-1-PN}, in particular \eqref{est-P_n-H-eta-C-delta} and the assumption that  $ u_0 \in H^\eta_2$, we  deduce that $ |u_{N, M}^0|_{C^\delta}\leq C_{\delta, \eta}$. As the proof for $ u_{N, M}^1$ is similar to the general case, we omit. So now, we assume that there exists $ C(\omega)$, such that for almost all $\omega$, we have $ |u_{N,M}^{k}(\omega)|_{C^\delta} \leq C(\omega) $ for all $ k=1,...,m-1 $. Then,
\begin{equation}\label{est-u-N-M-m-G-1}
\vert u_{N,M}^{m}\vert_{C^\delta} \leq  \vert e^{-A^{\alpha/2}t_{m}} P_N u_0 \vert_{C^\delta}+ \vert \Delta t \sum_{k=0}^{m-1}e^{-A^{\alpha/2}(t_{m}-t_{k})} P_N F(u_{N,M}^{k})\vert_{C^\delta} + \vert \mathcal{W}_N(t_m)\vert_{C^\delta}.
\end{equation}
Thanks to Lemma \ref{Prop-sg-1}, in particular, Est.\eqref{aide-2-1-Bis} and Lemma \ref{Lem-Est-1-PN}, in particular Est.\eqref{est-P_n-H-beta}, we infer that there exists a constant $ C_{\delta, \eta}$, such that  
\begin{equation}\label{est-1-u-N-M-m}
|e^{-A^{\alpha/2}t_{m}} P_N u_0|_{C^\delta} \leq |e^{-A^{\alpha/2}t_{m}}|_{\mathcal{L}(H^\eta_2, C^\delta)}|P_N|_{\mathcal{L}(H^\eta_2)} |u_0|_{H^\eta_2} \leq C_{\eta, \delta}|u_0|_{H^\eta_2}
\end{equation}
To estimate the second term in the RHS of Eq.\eqref{est-u-N-M-m-G-1}, we use Lemma \ref{Prop-sg-1}, in particular, Est.\eqref{aide-2-1} and Corollary \ref{Coro-linear-growth}, we get
\begin{eqnarray}
| \Delta t \sum_{k=0}^{m-1}e^{-A^{\alpha/2}(t_{m}-t_{k})}\!\!\!\!\!\!\!\!\!\! &{}&\!\!\!\! P_N F(u_{N,M}^{k})|_{C^\delta} \nonumber\\
& \leq &  \Delta t \sum_{k=0}^{m-1}|e^{-A^{\alpha/2}(t_{m}-t_{k})} P_N F(u_{N,M}^{k})|_{C^\delta} \nonumber\\ 
& \leq & \Delta t \sum_{k=0}^{m-1}|e^{-A^{\alpha/2}(t_{m}-t_{k})}|_{\mathcal{L}(H^{-\frac\alpha2}_2, C^\delta)}|P_N|_{\mathcal{L}(H^{-\frac\alpha2}_2)} |F(u_{N,M}^{k})|_{H^{-\frac\alpha2}_2} \nonumber\\
& \leq & C_{\alpha, \delta, R}\Delta t \sum_{k=0}^{m-1}(t_{m}-t_{k})^{-\frac{1}{2\alpha}(\alpha+2\delta+1)} (1+|u_{N,M}^{k}|_{C^\delta}).
\end{eqnarray}
using the representation of $\Delta t$ as $\int_{t_k}^{t_{k+1}} ds$, the fact that for $s \in [t_k, t_{k+1}]$, we have $ (t_m-t_k)\geq (t_m -s)$, we obtain
\begin{eqnarray}
| \Delta t \sum_{k=0}^{m-1}e^{-A^{\alpha/2}(t_{m}-t_{k})}\!\!\!\!\!\!\!\!\!\! &{}&\!\!\!\! P_N F(u_{N,M}^{k})|_{C^\delta} \nonumber\\
& \leq & C_{\alpha, \delta, R} \sum_{k=0}^{m-1}\int_{t_k}^{t_{k+1}}(t_{m}-s)^{-\frac{1}{2\alpha}(\alpha+2\delta+1)}ds (1+|u_{N,M}^{k}|_{C^\delta})\nonumber\\
& \leq & C_{\alpha, \delta, R}\big[\int_0^{t_m}(t_{m}-s)^{-\frac{1}{2\alpha}(\alpha+2\delta+1)}ds+  \sum_{k=0}^{m-1}\int_{t_k}^{t_{k+1}}(t_{m}-s)^{-\frac{1}{2\alpha}(\alpha+2\delta+1)}ds |u_{N,M}^{k}|_{C^\delta}\big].
\end{eqnarray}
As $ \delta<\frac{\alpha-1}{2}$, the integral in the RHS of the estimate above converge and we denote $ \varphi^{\alpha, \delta}_{k, m}:= C_{\alpha, \delta, R}\int_{t_k}^{t_{k+1}}(t_{m}-s)^{-\frac{1}{2\alpha}(\alpha+2\delta+1)}ds$. Moreover,
\begin{equation}\label{est-sum-phi-k-m}
 \sum_{k=0}^{m-1}\varphi^{\alpha, \delta}_{k, m} = C_{\alpha, \delta, R} \int_0^{t_m}(t_{m}-s)^{-\frac{1}{2\alpha}(\alpha+2\delta+1)}ds= \frac{2\alpha C_{\alpha, \delta, R}}{\alpha-2\delta-1}T^{1-\frac{1}{2\alpha}(\alpha+2\delta+1)}<\infty.
\end{equation}

\del{$ \varphi^{\alpha, \delta}_{k, m}:= C_{\alpha, \delta, R}\int_{t_k}^{t_{k+1}}(t_{m}-s)^{-\frac{1}{2\alpha}(\alpha+2\delta+1)}ds = \frac{2C_{\alpha, \delta, R}\alpha}{\alpha-2\delta-1}[(t_m-t_k)^{1-\frac{1}{2\alpha}(\alpha+2\delta+1)} - (t_m-t_{k+1})^{1-\frac{1}{2\alpha}(\alpha+2\delta+1)}]$. Moreover, $ \sum_{k=0}^{m-1}\varphi^{\alpha, \delta}_{k, m}<\infty$}

Thus
\begin{eqnarray}\label{est-2-u-N-M-m}
| \Delta t \sum_{k=0}^{m-1}e^{-A^{\alpha/2}(t_{m}-t_{k})} P_N F(u_{N,M}^{k})|_{C^\delta}
& \leq & C_{\alpha, \delta, R}+ \sum_{k=0}^{m-1}\varphi^{\alpha, \delta}_{k, m} |u_{N,M}^{k}|_{C^\delta}.
\end{eqnarray}
Now, we gathered estimates; \eqref{est-u-N-M-m-G-1}, \eqref{est-1-u-N-M-m}, \eqref{est-2-u-N-M-m} and Lemma \ref{lem-Soundous}, we end up with the following estimate:
\begin{eqnarray}\label{est-u-N-M-m-G-2}
|u_{N,M}^{m}|_{C^\delta} &\leq & C_{\alpha, \delta, R, |u_0|_{H^\eta_2}}(\omega)+ \sum_{k=0}^{m-1}\varphi^{\alpha, \delta}_{k, m} |u_{N,M}^{k}|_{C^\delta}.
\end{eqnarray}
We apply the discrete Gronwal lemma, see Lemma \ref{Gronwall-Lemma} and Est. \eqref{est-sum-phi-k-m}, we get 
\begin{eqnarray}\label{est-u-N-M-m-G-3}
|u_{N,M}^{m}|_{C^\delta} &\leq & C_{\alpha, \delta, R, T, |u_0|_{H^\eta_2}}(\omega)e^{\sum_{k=0}^{m-1}\varphi^{\alpha, \delta}_{k, m}}<\infty.
\end{eqnarray}
\end{proof}}


\subsection{Proof of Theorem \ref{main-result-fully}}\label{sec-proof-full-discrt}
\del{To prove Theorem \ref{main-result-fully},}

Using the triangular inequality and Theorem \ref{main-result-Galerkin-approx},  in particular Est.\eqref{main-est-Galerkin-approx}, we get,
\begin{eqnarray}
|u(t_m)- u_{N, M}^m|_{C^\delta} &\leq &  |u(t_m)- u_{N}(t_m)|_{C^\delta} + |u_N(t_m)- u_{N, M}^m|_{C^\delta}\nonumber\\ &\leq & C N^{-(\frac{\alpha -1}{2}-\delta)+}+ |u_N(t_m)- u_{N, M}^m|_{C^\delta}.
\end{eqnarray}
In the aim to estimate the term $|u_N(t_m)- u_{N, M}^m|_{C^\delta}$, we rewrite  $u_{N, M}^m$  as:
\begin{equation}
u_{N,M}^m := e^{-A^{\alpha/2}t_m}u_N^0 + \sum_{k=0}^{m-1}\int_{t_k}^{t_{k+1}} e^{-A^{\alpha/2}(t_m-t_k)}P_N F(u_{N, M}^{k}) ds + \mathcal{W}_N(t_m).    
\end{equation}
Then,  
\begin{eqnarray}\label{second-fully}
|u_N(t_m)- u_{N, M}^m|_{C^\delta} &\leq & 
\sum_{k=0}^{m-1}|\int_{t_k}^{t_{k+1}} [e^{-A^{\alpha/2}(t_m-s)}P_N F(u_{N}(s)) - e^{-A^{\alpha/2}(t_m-t_k)}P_N F(u_{N, M}^{k})]ds|_{C^\delta}\nonumber\\
&\leq & J_1+J_2+J_3,
\end{eqnarray}
where
\begin{eqnarray}
 J_1 &:= & \sum_{k=0}^{m-1}\int_{t_k}^{t_{k+1}} |e^{-A^{\alpha/2}(t_m-s)}P_N [F(u_{N}(s)) - F(u_{N}(t_{k})]|_{C^\delta}ds,
\end{eqnarray}
\begin{eqnarray}
 J_2 &:= & \sum_{k=0}^{m-1}\int_{t_k}^{t_{k+1}} |[e^{-A^{\alpha/2}(t_m-s)} - e^{-A^{\alpha/2}(t_m-t_k)}]P_NF(u_{N}(t_{k})|_{C^\delta}ds 
\end{eqnarray}
and 
\begin{eqnarray}
 J_3 &:= & \sum_{k=0}^{m-1}\int_{t_k}^{t_{k+1}} |e^{-A^{\alpha/2}(t_m-t_k)}P_N [F(u_{N}(t_{k})- F(u_{N, M}^{k})]|_{C^\delta}ds.
\end{eqnarray}
Now, we estimate the terms $J_1, J_2, J_3$. But, first of all, let us  remark that thanks to the uniform boundedness of $(u_N)_N$ with respect to $t$ and $N$ and of $u^m_{N, M}$ with respect to $m, N, M$, we can choose $R(\omega)= \max\{ \sup_N\sup_{t\in [0,T]}|u_N(t, \omega)|_{C^\delta}, \sup_{m, N, M}|u_{N,M}^m(\omega)|_{C^\delta}\}$, for $a.s. \omega \in \Omega$ and by Application of  Lemma \ref{main-nonlinear}, we infer the existence of a random variable $C_{F, \alpha, \delta}(\omega)$, such that 
\begin{equation}\label{unf-lipschits-1}
|F(u_N(s, \omega))-F(u_{N, M}^m(\omega))|_{H^{-\frac\alpha2}_2}\leq C_{F, \alpha, \delta}(\omega)(|u_N(s, \omega)- u_{N, M}^m(\omega)|_{C^\delta})
\end{equation}
and 
\begin{equation}\label{unf-lipschits-2}
|F(u_N(s, \omega))-F(u_{N}(t, \omega))|_{H^{-\frac\alpha2}_2}\leq C_{F, \alpha, \delta}(\omega)(|u_N(s, \omega)- u_{N}(t, \omega)|_{C^\delta}).
\end{equation}
To estimate $J_1$, we use Corollary \ref{Coro-1}, Lemma \ref{Lem-Est-1-PN}, in particular, Est.\eqref{est-P_n-H-beta} and  Est.\eqref{unf-lipschits-2}\del{Est.\eqref{main-inq-nolin}(recall here that due to the uniform boundedness of the sequences $u_N(t)$ and $u^m_{N, M}$, the constant $C_R$ is a universal random which depends only on $\alpha$ and $\delta$)}, we get
\begin{eqnarray}
 J_1 &\leq & \sum_{k=0}^{m-1}\int_{t_k}^{t_{k+1}} \Vert e^{-A^{\alpha/2}(t_m-s)} \Vert_{\mathcal{L}(H^{-\alpha/2}_2, C^\delta)} \Vert P_N\Vert_{\mathcal{L}(H^{-\frac{\alpha}{2}}_{2})}  |F(u_{N}(s)) - F(u_{N}(t_{k})|_{H^{-\frac\alpha2}_2}ds \nonumber\\  
& \leq & C_{F, \alpha, \delta}  \Big(\sum_{k=0}^{m-1}\int_{t_k}^{t_{k+1}}(t_m-s)^{-\frac{1}{2\alpha}(\alpha+2\delta+1)}| u_{N}(s) - u_{N}(t_{k})|_{C^\delta}ds\Big).
\end{eqnarray}
Thanks to the regularity of the Galerkin solution, see Theorem \ref{Theorem-disc},\del{ Proposition  \ref{Prop-reg-Galerkin-solu},} we infer that 
\begin{eqnarray}\label{eq-imd-j1}
 J_1 &\leq &  C_{F, \alpha,\delta}  \Big(\sum_{k=0}^{m-1}\int_{t_k}^{t_{k+1}}(t_m-s)^{-\frac{1}{2\alpha}(\alpha+2\delta+1)+}(s-t_k)^{(\frac{\alpha -1-2\delta}{2\alpha})-}ds\Big).
\end{eqnarray}
As $ (s-t_k)\leq \Delta t$ for all $s\in [t_k, t_{k+1}]$ and $\frac{1}{2\alpha}(\alpha+2\delta+1)<1$, we get
\begin{eqnarray}\label{J-1last}
 J_1 & \leq & C (\Delta t)^{(\frac{\alpha -1-2\delta}{2\alpha})-} \Big( \sum_{k=0}^{m-1}\int_{t_k}^{t_{k+1}}(t_m-s)^{-\frac{1}{2\alpha}(\alpha+2\delta+1)+}ds\Big) \nonumber\\ 
& \leq & C(\Delta t)^{(\frac{\alpha -1-2\delta}{2\alpha})-} \int_{0}^{t_{m}}(t_m-s)^{-\frac{1}{2\alpha}(\alpha+2\delta+1)+}ds \leq C_{\alpha,\delta}  T^{(1-\frac{1}{2\alpha}(\alpha+2\delta+1))-}(\Delta t)^{(\frac{\alpha -1-2\delta}{2\alpha})-}.\nonumber\\
& &
 \end{eqnarray}
To estimate $J_2$, we use \del{\eqref{aide-3-3}} Lemma \ref{lem-sg-regul},  Lemma \ref{Lem-Est-1-PN}, in particular, Est.\eqref{est-P_n-H-beta} and Est.\eqref{cond-lin-growth-univ}\del{Corollary \ref{Coro-linear-growth}( also recall here that due to the uniform boundedness of the sequences $u_N(t)$ and $u^m_{N, M}$, the constant $C_R$ is a universal random which depends only on $\alpha$ and $\delta$)}, then we end up with the following estiamte
\begin{eqnarray}
 J_2 & \leq & \sum_{k=0}^{m-1}\int_{t_k}^{t_{k+1}} \Vert e^{-A^{\alpha/2}(t_m-s)} - e^{-A^{\alpha/2}(t_m-t_k)}\Vert_{\mathcal{L}(H^{-\alpha/2}_2, C^\delta)} \Vert P_N\Vert_{\mathcal{L}(H^{-\frac{\alpha}{2}}_{2})} |F(u_{N}(t_{k})|_{H^{-\alpha/2}_2}ds\nonumber\\  
& \leq & C_{F, \alpha, \delta}\Big(\sum_{k=0}^{m-1}\int_{t_k}^{t_{k+1}} (s-t_k)^{\eta}(t_m-s)^{-\gamma} (1+ |u_{N}(t_{k})|_{C^\delta})ds\Big), \nonumber\\
 \end{eqnarray}
by taking $ \gamma=1-\epsilon $ and $ \eta= (\frac{\alpha-1-2\delta}{2\alpha})-\epsilon $ we deduce
\begin{eqnarray}\label{J-2last}
 J_2 & \leq & C_{F, \alpha,\delta} T^{\epsilon}(\Delta t)^{(\frac{\alpha -1-2\delta}{2\alpha})-}.
 \end{eqnarray}
Now, arguing as for the estimation of $J_1$, using  Corollary \ref{Coro-1}, Lemma \ref{Lem-Est-1-PN}, in particular, Est.\eqref{est-P_n-H-beta} and  Est.\eqref{unf-lipschits-1}\del{Est.\eqref{main-inq-nolin}(recall here that due to the uniform boundedness of the sequences $u_N(t)$ and $u^m_{N, M}$, the constant $C_R$ is a universal random which depends only on $\alpha$ and $\delta$)}, we get
\begin{eqnarray}\label{J-3last}
 J_3 &\leq & \sum_{k=0}^{m-1}\int_{t_k}^{t_{k+1}} \Vert e^{-A^{\alpha/2}(t_m-t_k)}\Vert_{\mathcal{L}(H^{-\alpha/2}_2, C^\delta)} \Vert P_N\Vert_{\mathcal{L}H^{-\frac{\alpha}{2}}_{2}}  |F(u_{N}(t_{k})- F(u_{N, M}^{k})]|_{H^{-\alpha/2}_2}ds\nonumber\\
 &\leq & C_{\alpha, \delta}\Big(\sum_{k=0}^{m-1}\int_{t_k}^{t_{k+1}} (t_m-t_k)^{-(\frac{\alpha +1+ 2\delta}{2\alpha})-}|u_{N}(t_{k})- u_{N, M}^{k}|_{C^\delta}ds\Big).  
\end{eqnarray}
Thus thanks to the estimates \eqref{second-fully}, \eqref{J-1last}, \eqref{J-2last} and \eqref{J-3last}, we get
\begin{eqnarray}
|u_N(t_m)- u_{N, M}^m|_{C^\delta} &\leq &
C_{\alpha, \delta, T}\Big(\sum_{k=0}^{m-1}(\int_{t_k}^{t_{k+1}} (t_m-t_k)^{-(\frac{\alpha +1+ 2\delta}{2\alpha})-}ds)|u_{N}(t_{k})- u_{N, M}^{k}|_{C^\delta} + (\Delta t)^{(\frac{\alpha -1-2\delta}{2\alpha})-}\Big),\nonumber\\
\end{eqnarray}
as $ t_m -s \leq t_m -t_k $, for all $ s \in [t_k,t_{k+1}] $ we get
\begin{equation}\label{fully-desc-bGrnwal}
|u_N(t_m)- u_{N, M}^m|_{C^\delta} \leq 
C_{\alpha, \delta, T}\Big(\sum_{k=0}^{m-1}(\int_{t_k}^{t_{k+1}} (t_m-s)^{-(\frac{\alpha +1+ 2\delta}{2\alpha})-}ds)|u_{N}(t_{k})- u_{N, M}^{k}|_{C^\delta} + (\Delta t)^{(\frac{\alpha -1-2\delta}{2\alpha})-}\Big)
\end{equation}
The application of the discretized version of Gronwall Lemma (see Lemma \ref{Gronwall-Lemma}) yields
\begin{eqnarray}
|u_N(t_m)- u_{N, M}^m|_{C^\delta} &\leq &
C_{\alpha, \delta, T} (\Delta t)^{(\frac{\alpha -1-2\delta}{2\alpha})-} \; exp \Big(\sum_{k=0}^{m-1}\int_{t_k}^{t_{k+1}} (t_m-s)^{-(\frac{\alpha +1+ 2\delta}{2\alpha})-} ds  \Big)  \nonumber\\
&\leq &
C_{\alpha, \delta, T}\del{ (\Delta t)^{\frac\xi\alpha}}(\Delta t)^{(\frac{\alpha -1-2\delta}{2\alpha})-} \; exp\Big( \int_{0}^{t_{m}} (t_m-s)^{-(\frac{\alpha +1+ 2\delta}{2\alpha})-} ds \Big).
\end{eqnarray}
Thanks to the fact that $ (\frac{\alpha +1+ 2\delta}{2\alpha}) < 1$, we obtain
\begin{equation}
|u_N(t_m)- u_{N, M}^m|_{C^\delta} \leq C_{\alpha, \delta, T}(\Delta t)^{(\frac{\alpha -1-2\delta}{2\alpha})-}.
\end{equation}
\appendix

\section{Definitions and some Basic results.}\label{sec-Term-Basic-results}

We define function spaces on bounded domain  $D\subset \mathbb{R}$ and on $\mathbb{R}$. Recall that by a domain we mean an open set. The definitions and results above are still valid for domains   $D\subset \mathbb{R}^n$.\del{Let $\mathcal{D}(D):=C_{0}^{\infty}(D) $ be the set of infinitely differentiable functions with compact support on $ D $ and its dual space $ \mathcal{D}'(D) $.} We denote by $\mathcal{S}$ respectively $\mathcal{S}'$ the Schwartz respectively distribution spaces and by $ \mathcal{F}$ respectively $\mathcal{F}^{-1}$ the Fourier respectively the inverse Fourier transforms. Let $ 0< p,q\leq \infty, \;\; s\in \mathbb{R}   \del{1\leq p \leq\infty, \;\; 0<s\neq integer}$. For simplicity reasons, we somtimes restrict these parameters for the required cases. 

\noindent {\bf Lebesgue Space}. Let $D\subseteq \mathbb{R}$, $$ 
L^p:= L^p(D):=\{f \;\;\text{measurable}\;\;  s.t. \;\; |f|^p_{L^p}:= \int_D|f(x)|^pdx<\infty\}, \;\; 0< p< \infty.$$

\begin{equation*}
L^\infty:= L^\infty(D):=\{f \;\;\text{measurable}\;\;  s.t. \;\; |f|_{L^\infty}:= esssup_{D} |f(x)|<\infty\}.
\end{equation*}

\subsection{Function spaces on $\mathbb{R}$} (For simplicity we omit to mention $\mathbb{R}$ in notations.)

\vspace{0.25cm}
\noindent {\bf Sobolev spaces}. For $1\leq p\leq \infty$ and $m \in \mathbb{N}$,
$$ W^m_{p}:= \{f\in L^p, \;\; s.t. \;\;  |f|_{W^m_{p}}^{p}:= \sum_{k=0}^m|D^{k}f|_{L^p}^{p} <\infty \},$$
where $ D^{k}f $ represents the derivative of $ f $ of order $ k $ in the distributional sense.

\vspace{0.25cm}

\noindent {\bf Fractional Sobolev spaces}. For  $1\leq p< \infty \;\; \text{and} \;\; 0<s\neq integer,$ 
with $ \; [s], \{s\}$ are respectively the integer and the fractional parts of s.
$$  W^s_{p}:= \{f\in W^{[s]}_{p}, \;\; s.t. \;\; |f|_{W^{s}_{p}}^{p}:= |f|_{W^{[s]}_{p}}^{p} + \sum_{k=0}^{[s]} \int_{\mathbb{R}} \int_{\mathbb{R}}  \frac{|D^{k}f(x)-D^{k}f(y)|^p}{|x-y|^{1+\{s\}p}} dx \; dy <\infty \}.$$
\del{\vspace{0.25cm}
   For $ s < 0, \; 1<p<\infty$, we define  $W^{s}_{p}(D)$ as the dual of $W^{-s}_{q,0}(D)$, i.e.
\begin{equation*}
W^{s}_{p}(D):= (W^{-s}_{q,0}(D))^{'},
\end{equation*}
where $ W^{-s}_{q,0}(D) $ is the closure of $ C_{0}^{\infty}(D) $ in the space $ W^{-s}_{q}(D) $,   $ 1 < q < \infty $ is  the conjugate of $p$.
}
\begin{remark}
Let us mention here that for $ p=2 $, the space $ W^s_{2} $ is a Hilbert space. We  denote it by $ H^{s}_{2} $.
\end{remark}

In order to introduce Besov spaces we need to define special systems of functions.
\begin{defn}\label{Def-Syst}\cite[Definition 1, P7]{Runst-Sickel}.
Let $ \phi=(\phi_{j})_{j=0}^{\infty}\subset  \mathcal{S}$ be a system such that 
\begin{enumerate}
\item for every $ x \in \mathbb{R} $, $ \sum_{j=0}^{\infty} \phi_j(x)=1 $,
\item there exist constants $ c_{1}, c_{2}, c_{3} > 0 $ with
\begin{equation*}
supp \phi_0 \subset \lbrace x \in \mathbb{R}: \vert x \vert \leq c_1 \rbrace,
\end{equation*}
and
\begin{equation*}
supp \phi_j \subset \lbrace x \in \mathbb{R}:  c_2 \; 2^{j-1} \vert x \vert \leq c_3 \; 2^{j+1} \rbrace, \; \text{for} \; j=1,2,...
\end{equation*}
\item for every nonnegative integer $ k $ there exists $ c_k > 0 $ s.t,
\begin{equation*}
\sup_{x \in \mathbb{R}} \; \sup_{j=0,1,...} \; 2^{jk} \vert D^{k} \phi_j(x) \vert \leq c_k.
\end{equation*}
\end{enumerate}
\end{defn}

\noindent $\text{{\bf Besov spaces}. For}\;\; s\in \mathbb{R} \;\;  \text{and} \;\; 0< p,q\leq \infty,$ see  also \cite[Convention 1. P11]{Runst-Sickel}.
 $$ B^s_{p q}:= \lbrace f\in \mathcal{S}', \;\; s.t. \;  |f|_{B^s_{p q}}^{q}:= | 2^{sj} \mathcal{F}^{-1}[\phi_j\mathcal{F}f](.)|_{l^q(L^p)} <\infty \rbrace,\;\; \text{where} \; (\phi_{j})_{j=0}^{\infty}\; \text{is given by  Def.\ref{Def-Syst}}.$$

\del{For $ 0> s \neq \text{integer}, \; 1<p<\infty, \; 0<q<\infty$, we define  $B^{s}_{pq}$ as the dual of $W^{-s}_{p'q'}$, where $p$ respectively $q$ is the conjugate of $p'$ respectively $q'$. In particular, we have  for $ 0> s \neq \text{integer}, \; 1<p<\infty$,  $W^{s}_{p}$ is the dual of $W^{-s}_{p'}$, where $p'$ is the conjugate of $p$.}

\del{$\text{{\bf Besov spaces}. For}\;\; s\in \mathbb{R} \;\;  \text{and} \;\; 0< p,q\leq \infty ,$
 $$ B^s_{p q}:= \lbrace f\in \mathcal{S}', \;\; s.t. \;  |f|_{B^s_{p q}}^{q}:= \sum_{j=0}^{\infty} 2^{sjq} \vert \mathcal{F}^{-1}(\phi_j\mathcal{F}f)(.)|_{L^p}^{q} <\infty \rbrace,$$}

\noindent {\bf Space of continuous functions.} \begin{equation*}
C:=\lbrace f \; \text{bounded and continuous, s.t.} \; \vert f \vert_{C}:= \sup_{x \in \mathbb{R}} \vert f(x) \vert < \infty \rbrace.
\end{equation*}

$$\text{{\bf H\"older spaces}. For}\;  \delta \in (0, 1), \;\;   
C^\delta:=\{ f\in C, s.t. \;\; |f|_{C^\delta}:=|f|_{C}+ \sup_{x, y \in \mathbb{R}, \; \\ x\neq y}\frac{|f(x)-f(y)|}{|x-y|^\delta}<\infty\}.$$

$$\text{{\bf Zygmund spaces}. For }  \; s \in (0,1), \;\; \mathcal{C}^s:= \{f\in C \; s.t. \; 
|f|_{\mathcal{C}^\delta}:=|f|_{C}+ \sup_{h \in \mathbb{R}, h\neq 0} |h|^{-s}|\Delta_h^2f|_{C}<\infty \}, \; $$
with $\Delta_h^2f(x):= \sum_{l=0}^2 (-1)^lC_l^2f(x+(2-l)h).$

\begin{lem}\label{lem-ess-general} 
We have:  
\begin{itemize}
\item The identities, see e.g. \cite[P.14]{Runst-Sickel} and \cite{Triebel-83}.
\begin{itemize}
\item  For $0<s\neq$ integer, $\;\; \; C^s= B^s_{\infty\; \infty}= \mathcal{C}^s$.
\item  For $0<s\neq$ integer and $ 1\leq p < \infty $, $\;\; \;  B^s_{p p}= W^s_p$.
\item  For $ 1\leq p \leq \infty $, $ W^0_p=L^p$.
\end{itemize}
\item The continuous embeddings: 

\begin{equation}\label{Imbd-2}
B^{s}_{p q_0} \hookrightarrow B^{s}_{p q_1},
\end{equation}
for $ s >0 $, $ 0<p\leq \infty $ and $ 0 < q_0 < q_1 \leq \infty $, see e.g. \cite[Prop 2.2.1, P.29]{Runst-Sickel}. 
\begin{equation}\label{Imbd-1}
B^{s_0}_{p_0 q_0} \hookrightarrow B^{s_1}_{p_1 q_1},
\end{equation}
provided $ s_1<s_0$, $ s_0-\frac{1}{p_0}> s_1-\frac{1}{p_1}$ and $ p_0 \leq p_1 $, see e.g. \cite[Remark 2, P.31]{Runst-Sickel}.
\item As a consequence of Embedding \eqref{Imbd-1}, we have for $ s_1<s_0$
\begin{equation}\label{Imbd-req-1}
B^{s_0}_{\infty \infty} \hookrightarrow B^{s_1}_{2 2},
\end{equation}
\begin{equation}\label{Imbd-req-2}
\mathcal{C}^{s_0} \del{= C^{s_0}} \hookrightarrow H_{2}^{s_1},
\end{equation}
and for $ s_0>\frac{2-\alpha}{2}$ and $ s_0\neq $integer,
\begin{equation}\label{Imbd-req-3}
\mathcal{C}^{s_0} = C^{s_0} \hookrightarrow H_{2}^{1-\frac\alpha2}.
\end{equation}
\item  \cite[Theorem 2.8.3, Ps. 145-146]{Triebel-83}, for $0<p,q\leq \infty, \;\;  s>\frac{1}{p}$, $ B^{s}_{p q}$ is a multiplication algebra. In particular, for $s>0$, $ \mathcal{C}^{s}$ is a multiplication algebra.
\end{itemize}
\end{lem}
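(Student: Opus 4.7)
The plan is to assemble the lemma from well-known results in function space theory, treating each item as a specialization or direct citation from the standard references \cite{Runst-Sickel} and \cite{Triebel-83}. Since nothing in the statement is genuinely new, the work consists of verifying that the parameters fall within the hypotheses of the cited theorems, and of chaining the consequences together.

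First I would dispose of the identities. The equality $C^s=B^s_{\infty\infty}=\mathcal C^s$ for $s>0$ non-integer is a classical Littlewood--Paley characterisation of Hölder and Zygmund spaces; I would cite \cite[Sec.~2.7]{Triebel-83}. The identity $B^s_{pp}=W^s_p$ for $0<s\neq\text{integer}$ and $1\le p<\infty$ is the Besov characterisation of fractional Sobolev (Slobodeckij) spaces; again in Triebel. The trivial $W^0_p=L^p$ is by convention.

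Next I would handle the embeddings. The monotonicity $B^s_{p q_0}\hookrightarrow B^s_{p q_1}$ for $q_0<q_1$ reduces to the embedding $\ell^{q_0}\hookrightarrow\ell^{q_1}$ applied to the Littlewood--Paley norm; this is the content of \cite[Prop.~2.2.1]{Runst-Sickel}. The Sobolev-type embedding $B^{s_0}_{p_0q_0}\hookrightarrow B^{s_1}_{p_1q_1}$ under $s_1<s_0$, $s_0-1/p_0\ge s_1-1/p_1$, $p_0\le p_1$ is \cite[Remark~2, p.~31]{Runst-Sickel}. No real verification beyond reading off parameters is needed.

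The three specialised embeddings \eqref{Imbd-req-1}--\eqref{Imbd-req-3} are then deduced by applying the previous embedding with explicit parameter choices. For \eqref{Imbd-req-1} take $p_0=q_0=\infty$, $p_1=q_1=2$, which gives the loss of smoothness $s_0-s_1\ge 1/2$; since we only require $s_1<s_0$, the extra room is absorbed by first using $B^{s_0}_{\infty\infty}\hookrightarrow B^{s_0'}_{\infty\infty}$ for any $s_0'\in(s_1+1/2,s_0)$. Embedding \eqref{Imbd-req-2} is a restatement using $\mathcal C^{s_0}=B^{s_0}_{\infty\infty}$ and $H^{s_1}_2=B^{s_1}_{22}$, and \eqref{Imbd-req-3} is the special case $s_1=1-\alpha/2$, whose admissibility is exactly the hypothesis $s_0>(2-\alpha)/2$.

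Finally, the multiplication algebra statement is \cite[Theorem 2.8.3, pp.~145--146]{Triebel-83}: for $s>1/p$ and $0<p,q\le\infty$, $B^s_{pq}$ is closed under pointwise multiplication. Specialising to $p=q=\infty$ gives that $\mathcal C^s=B^s_{\infty\infty}$ is a multiplication algebra for all $s>0$. The only substantive point one must check is that the $s>1/p$ threshold is satisfied in the cases actually invoked by the rest of the paper, which for $p=\infty$ is automatic.

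Since every step is a direct citation, there is no serious obstacle; the only care required is to make sure the parameter ranges in the Sobolev-Besov embedding are met strictly (notably the condition $s_0-1/p_0\ge s_1-1/p_1$) when reducing to the three specialised embeddings that are actually used in the body of the paper.
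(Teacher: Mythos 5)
The paper offers no proof of this lemma beyond the citations already embedded in its statement, so for the identities, the two general embeddings, and the multiplication-algebra property your proposal coincides with what the paper does. The gap is in your deduction of \eqref{Imbd-req-1}--\eqref{Imbd-req-3} from \eqref{Imbd-1}. Taking $p_0=\infty$, $p_1=2$ violates the hypothesis $p_0\le p_1$ of \eqref{Imbd-1}: on the whole line one can only trade smoothness for \emph{higher} integrability, and indeed $B^{s_0}_{\infty\infty}(\mathbb{R})\hookrightarrow B^{s_1}_{22}(\mathbb{R})$ is false (a nonzero constant lies in $\mathcal C^{s_0}(\mathbb{R})$ but in no $B^{s_1}_{22}(\mathbb{R})$). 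These three embeddings are a bounded-domain phenomenon: one needs $L^\infty(0,1)\hookrightarrow L^2(0,1)$, e.g. $B^{s_0}_{\infty\infty}(D)\hookrightarrow B^{s_0}_{2\infty}(D)\hookrightarrow B^{s_1}_{22}(D)$ for $s_1<s_0$, the first step using the finiteness of the measure of $D$ and the second being elementary monotonicity in $s$ and $q$. Your proposal never invokes boundedness of the domain, so this step would fail as written.

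Your bookkeeping for the smoothness condition is also wrong-signed and the patch does not close. With $p_0=\infty$, $p_1=2$, $d=1$, the condition $s_0-\frac1{p_0}>s_1-\frac1{p_1}$ reads $s_0>s_1-\frac12$, which is automatic once $s_1<s_0$; there is no ``loss of smoothness $s_0-s_1\ge\frac12$.'' Moreover, even granting your reading, the proposed intermediate exponent $s_0'\in(s_1+\frac12,\,s_0)$ only exists when $s_0-s_1>\frac12$, so the argument would not cover the full range $s_1<s_0$ claimed by the lemma (and needed in the body of the paper, where $s_0-s_1$ is taken arbitrarily small). The fix is to drop the appeal to \eqref{Imbd-1} for these three items and argue on the bounded domain as above; the remaining items of your proposal are correct direct citations.
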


\begin{theorem}\label{aide-theorem1}\cite[Theorem 2.8.2]{Triebel-83}.
Let $ s \in \mathbb{R} $, $ 0 < p , q \leq \infty $ and $ \delta > max (s , (\frac{1}{min(p,1)}-1)-s) $. Then for every $ f \in \mathcal{C}^{\delta} $ and every  $ g \in  B_{p,q}^{s}$ there exists $ C>0 $ such that
\begin{equation}
\vert f \; g \vert_{B_{p,q}^{s}} \leq C \; \vert f \vert_{\mathcal{C}^{\delta}} \; \vert g \vert_{B_{p,q}^{s}}.
\end{equation}
\end{theorem}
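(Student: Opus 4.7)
The plan is to use Bony's paraproduct decomposition, which is the canonical route to product estimates in Besov-type spaces. Fix a Littlewood--Paley resolution $(\phi_j)_{j\geq 0}$ as in Definition~\ref{Def-Syst}, and set $\Delta_j h := \mathcal{F}^{-1}(\phi_j\mathcal{F}h)$ together with $S_j h := \sum_{k<j-1}\Delta_k h$. Decompose
\begin{equation*}
fg \;=\; \pi_1(f,g) + \pi_2(f,g) + \pi_3(f,g),
\end{equation*}
with $\pi_1(f,g) := \sum_{j} S_j f \cdot \Delta_j g$, $\pi_2(f,g) := \sum_{j} S_j g \cdot \Delta_j f$, and $\pi_3(f,g) := \sum_{|j-k|\leq 2}\Delta_j f \cdot \Delta_k g$. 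Each of the three pieces will be bounded separately in the $B^s_{p,q}$-norm, using the identification $\mathcal{C}^\delta = B^\delta_{\infty,\infty}$ from Lemma~\ref{lem-ess-general} to supply the two basic building blocks $|\Delta_j f|_{L^\infty}\leq C\,2^{-j\delta}|f|_{\mathcal{C}^\delta}$ and $|S_j f|_{L^\infty}\leq |f|_{L^\infty}\leq |f|_{\mathcal{C}^\delta}$. Note that the hypothesis $\delta>\max(s,(1/\min(p,1)-1)-s)$ forces $\delta>0$.

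The two \emph{easy} paraproducts are $\pi_1(f,g)$ and $\pi_2(f,g)$, since each of their summands is Fourier-supported in an annulus of size $\sim 2^j$ and therefore the Littlewood--Paley characterization of $B^s_{p,q}$ applies directly to the dyadic sum. For $\pi_1(f,g)$, pulling $|S_j f|_{L^\infty}\leq |f|_{\mathcal{C}^\delta}$ out of the $L^p$-norm immediately gives $|\pi_1(f,g)|_{B^s_{p,q}}\leq C|f|_{\mathcal{C}^\delta}|g|_{B^s_{p,q}}$, with no extra constraint beyond $\delta>0$. For $\pi_2(f,g)$ one gains the decay $|\Delta_j f|_{L^\infty}\leq C\,2^{-j\delta}|f|_{\mathcal{C}^\delta}$ but must absorb the growth of the low-frequency factor, $|S_j g|_{L^p}\leq C\,2^{-j\min(s,0)}|g|_{B^s_{p,q}}$. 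The resulting $\ell^q$ sum of terms of order $2^{j(s-\delta)}$ converges precisely when $\delta>s$, which is the first of the two lower bounds on $\delta$ in the hypothesis.

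The main obstacle will be the resonance piece $\pi_3(f,g)$, whose summands $\Delta_j f\cdot \Delta_k g$ have Fourier support only in a \emph{ball} of radius $\sim 2^j$ rather than in an annulus, so the $B^s_{p,q}$-norm must be recovered by regrouping into dyadic annular pieces and then recombining. One writes $\Delta_l \pi_3(f,g) = \sum_{j\geq l-c}\Delta_l(\Delta_j f\cdot \Delta_{j+O(1)}g)$, applies the pointwise product bound $|\Delta_j f\cdot\Delta_{j+O(1)}g|_{L^p}\leq C\,2^{-j\delta}|f|_{\mathcal{C}^\delta}|\Delta_{j+O(1)}g|_{L^p}$, and then invokes the embedding $\ell^{\min(p,1)}\hookrightarrow\ell^1$ to sum the pieces whose spectra are merely ball-supported. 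This embedding is trivial when $p\geq 1$ but costs the factor $1/\min(p,1)-1$ when $p<1$, which is exactly the origin of that term in the hypothesis. The resulting weighted $\ell^q$ sum converges provided $\delta+s>1/\min(p,1)-1$, i.e.\ $\delta>(1/\min(p,1)-1)-s$, which is the second lower bound. Combining the three estimates yields the announced inequality $|fg|_{B^s_{p,q}}\leq C|f|_{\mathcal{C}^\delta}|g|_{B^s_{p,q}}$.
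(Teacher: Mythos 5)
Your paraproduct argument is correct and is essentially the standard proof of this multiplier estimate; the paper itself offers no proof here, citing Triebel's Theorem 2.8.2, whose proof proceeds by exactly this paramultiplication decomposition, with the three constraints $\delta>0$, $\delta>s$, and $\delta+s>\frac{1}{\min(p,1)}-1$ arising from the low-high, high-low, and resonance pieces just as you describe. The only nits are bookkeeping: with $S_jh=\sum_{k\le j-2}\Delta_kh$ the remainder term should run over $|j-k|\le 1$ to avoid double counting, and at $s=0$ the bound on $|S_jg|_{L^p}$ picks up a harmless factor of $j$ that is absorbed by $2^{-j\delta}$ since $\delta>0$.
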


\subsection{Function spaces on domains}
\begin{defn}{\cite[Definition 2.4.1.2\del{, P. 74}]{Runst-Sickel}}
For $ s \in \mathbb{R} $ and $ 0<p,q\leq\infty $,
\begin{equation*}
B^s_{p q}(D):= \lbrace f\in \mathcal{D}'(D), \;\ \; \exists \; g \in B^s_{p q}\; with \; g|_{D}=f, \; \; s.t. \; \vert f \vert_{B^s_{p q}(D)}:=  \inf \vert g \vert_{B^s_{p q}} <\infty \rbrace.
\end{equation*}
\end{defn}

\begin{defn} \cite[Section 5.3.2]{Triebel-92} \& \cite[Section 1.10.3\del{, P. 73}]{Triebel-92} Let $ D $ be a bounded $ C^{\infty} $-domain, then for $ s>0$, we define $\mathcal{C}^s(D):= B^s_{\infty\infty}(D).$ In particular, for $ s \in (0,1) $
\begin{equation*}
\mathcal{C}^s(D):= \{f\in C(D) \; s.t. \; 
|f|_{\mathcal{C}^\delta(D)}:=|f|_{C(D)}+ \sup_{h \in \mathbb{R}, h\neq 0} |h|^{-s}|\Delta_h^2 f(.,D)|_{C(D)}<\infty \},
\end{equation*}
with
\begin{equation*}
\Delta_h^2f(x,D):=\Big\{
\begin{array}{lr}
\Delta_h^2f(x), \;\;\text{if}\;\;\;\; x+jh \in D \; \text{for}\; j=0,1,2,\\
0, \; \; \; \; \text{otherwise}.
\end{array}
\end{equation*}
\end{defn}
\del{Let us mention here that the above results are still valid for bounded domains, see in particular, \cite[Section 3.3.2]{Triebel-83}.In particular,}
\begin{remark}\label{Rem-New}
The above results are still valid for bounded domains, see, e.g. \cite[Section 3.3.2]{Triebel-83} and \cite[Section 5.4]{Triebel-92}.
\end{remark}
Moreover, we have

\begin{coro}\label{coro-aide-theorem1}
Let $ 0 < s \neq \; integer$ and $\delta>s$ (in our study $\delta\in (0, 1)$), then for $ f \in \mathcal{C}^{\delta}(D) $ and every  $ g \in  H_{2}^{s}(D) $ there exists a positive constant $ C $ such that
\begin{equation}
\vert f \; g \vert_{H_{2}^{s}} \leq C \; \vert f \vert_{\mathcal{C}^{\delta}} \; \vert g \vert_{H_{2}^{s}}.
\end{equation}
\end{coro}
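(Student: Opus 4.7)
My plan is to deduce this corollary directly from Theorem \ref{aide-theorem1} by specializing the parameters and then transferring from $\mathbb{R}$ to the domain $D$. First I would use the identifications in Lemma \ref{lem-ess-general}: since $s > 0$ is not an integer, $H_2^s = W_2^s = B_{22}^s$, and by definition $\mathcal{C}^\delta = B_{\infty\infty}^\delta$. This reduces the claim on the full line to an application of Theorem \ref{aide-theorem1} with $p = q = 2$.

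Next I would verify the admissibility condition. In Theorem \ref{aide-theorem1}, one needs $\delta > \max\bigl(s,\, (\tfrac{1}{\min(p,1)} - 1) - s\bigr)$. With $p = 2$, we have $\min(p,1) = 1$, so $\tfrac{1}{\min(p,1)} - 1 = 0$, and since $s > 0$ the maximum collapses to $s$. Hence the hypothesis $\delta > s$ of the corollary is exactly the required assumption, and Theorem \ref{aide-theorem1} yields
\begin{equation*}
| f \, g |_{H_2^s(\mathbb{R})} = | f \, g |_{B_{22}^s(\mathbb{R})} \leq C\, | f |_{\mathcal{C}^\delta(\mathbb{R})} \, | g |_{H_2^s(\mathbb{R})}
\end{equation*}
for $f \in \mathcal{C}^\delta(\mathbb{R})$ and $g \in H_2^s(\mathbb{R})$.

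Finally, I would transfer the estimate to the bounded domain $D$ using the definition of Besov spaces on domains via restriction and Remark \ref{Rem-New}, which asserts that the identifications and multiplication results extend to $D$. Concretely, given $f \in \mathcal{C}^\delta(D)$ and $g \in H_2^s(D)$, I would pick extensions $\tilde f \in \mathcal{C}^\delta(\mathbb{R})$ and $\tilde g \in H_2^s(\mathbb{R})$ with norms controlled by those of $f,g$, apply the estimate on $\mathbb{R}$ to the product $\tilde f \tilde g$, and then use that $(\tilde f \tilde g)|_D = f g$ to pass to the infimum in the extension norm on $D$. The only potentially delicate step is this transfer, but since Remark \ref{Rem-New} guarantees that Theorem \ref{aide-theorem1} holds verbatim for bounded $C^\infty$-domains, the corollary follows at once; I do not expect any genuine obstacle beyond bookkeeping with the extension norms.
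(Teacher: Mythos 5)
Your proposal is correct and follows essentially the same route as the paper: the paper's proof is simply "apply Theorem \ref{aide-theorem1} together with the identification $B^{s}_{2,2}=H^{s}_{2}$ and $C^{\delta}=\mathcal{C}^{\delta}$ from Runst--Sickel," which is exactly your specialization to $p=q=2$ (where the condition $\delta>\max(s,(\frac{1}{\min(p,1)}-1)-s)$ indeed collapses to $\delta>s$). Your additional care about transferring the estimate from $\mathbb{R}$ to the bounded domain $D$ via extensions is the content of Remark \ref{Rem-New}, which the paper invokes implicitly, so nothing is missing.
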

\begin{proof}
The ressult is obtained by application of Theorem \ref{aide-theorem1} and  \cite[Proposition 2.1.2  P.14]{Runst-Sickel}. \del{from which  we have $ B_{2,2}^{s}= H^{s}_2$ and  $ C^{\delta}= \mathcal{C}^{\delta}$}
\end{proof}

\del{\begin{theorem}\label{aide-theorem1}\cite[Section 3.3.2]{Triebel-83}.
Let $ s \in \mathbb{R} $, $ 0 < p , q \leq \infty $ and $ \delta > max (s , (\frac{1}{min(p,1)}-1)-s) $. Then for every $ f \in \mathcal{C}^{\delta}(D) $ and every  $ g \in  B_{p,q}^{s}(D) $ there exists a positive constant $ C $ such that
\begin{equation}
\vert f \; g \vert_{B_{p,q}^{s}} \leq C \; \vert f \vert_{\mathcal{C}^{\delta}} \; \vert g \vert_{B_{p,q}^{s}}.
\end{equation}
\end{theorem}
}

\begin{theorem}\label{main-sob-embedding}\cite[Theorem 8.2]{Nezza}.
Let $ D \subseteq \mathbb{R}  $ be an extension domain for $ W^{r}_{p} $ with no external cups and let $ p \in [1, \infty) $, $ r \in (0,1) $ s.t.  $ \;r > \frac{1}{p} $. Then $\; \exists \;\;  C_{D, p, r} > 0 $ s.t.
\begin{equation}
\vert f \vert_{C^{r-\frac{1}{p}}} \leq C_{D, p,r} \left( \vert f \vert_{L^{p}}^{p} + \int_{D} \int_{D} \frac{\vert f(x)-f(y)\vert^{p}}{\vert x-y \vert^{1+rp}} dx \; dy \right)^{\frac{1}{p}}, \; \; \text{for any} \; f \in L^{p}(D).
\end{equation}
\end{theorem}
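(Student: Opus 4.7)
The plan is to reduce to the case $D=\mathbb{R}$ via the extension hypothesis and then establish the one-dimensional fractional Morrey-type embedding $W^{r,p}(\mathbb{R}) \hookrightarrow C^{r-1/p}(\mathbb{R})$, restricting back to $D$ at the end. Since the extension operator is bounded, the final constant $C_{D,p,r}$ absorbs both the extension norm and the purely analytic constant that governs the embedding on $\mathbb{R}$. So after this reduction I only need to prove the estimate for functions defined on the line.

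The core of the proof is an averaging argument. Fix $x,y \in \mathbb{R}$ with $h=|x-y|>0$ and take the interval $B=[x\wedge y, x\vee y]$ of length $h$, writing $f_B = h^{-1}\int_B f(z)\,dz$. The essential pointwise-to-average estimate is
\begin{equation*}
|f(x)-f_B| \;\leq\; \frac{1}{h}\int_B |f(x)-f(z)|\,dz,
\end{equation*}
and by Hölder's inequality with the weight $|x-z|^{-(1+rp)/p}$ multiplied and divided we get
\begin{equation*}
|f(x)-f_B| \;\leq\; \frac{1}{h}\Bigl(\int_B \tfrac{|f(x)-f(z)|^p}{|x-z|^{1+rp}}\,dz\Bigr)^{1/p}\Bigl(\int_B |x-z|^{(1+rp)/(p-1)}\,dz\Bigr)^{(p-1)/p}.
\end{equation*}
The weight integral converges precisely because $r>1/p$ (equivalently $(1+rp)/(p-1)>-1$), and a direct computation gives a bound of the form $C_{p,r}\,h^{1+r-1/p}$. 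Combining with the prefactor $1/h$ yields $|f(x)-f_B|\leq C_{p,r}\,h^{r-1/p}\,[f]_{W^{r,p}}$, and the same bound holds for $|f(y)-f_B|$. The triangle inequality $|f(x)-f(y)|\leq |f(x)-f_B|+|f_B-f(y)|$ then produces the Hölder seminorm estimate with exponent $r-1/p$.

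For the sup-norm part of $|f|_{C^{r-1/p}}$, I would fix a reference interval $I\subset D$ of unit length, note that $|f_I|\leq \|f\|_{L^p(D)}$ by Hölder, and bound $|f(x)|\leq |f_I|+|f(x)-f_I|$ with the same averaging argument as above applied on a chain of intervals connecting $x$ to $I$ (using that $D$ is an extension domain of finite width after localization, or more simply using the estimate on $\mathbb{R}$ after extension). Both the $\|f\|_{L^p}$ term and the Gagliardo seminorm term then appear on the right-hand side, recombining exactly into the $L^p$-norm raised to the $p$th power plus the double integral, as in the statement.

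The main obstacle is the careful bookkeeping at two places: first, verifying that the weighted Hölder exponent yields exactly the power $h^{r-1/p}$ (this is where the hypothesis $r>1/p$ is essential and determines the Hölder exponent), and second, ensuring that $f$ admits a continuous representative. The latter follows from the averaging estimate itself: it shows that $\{f_\rho(x)\}_{\rho>0}$ is Cauchy uniformly in $x$, hence converges to a continuous function that agrees with $f$ almost everywhere by the Lebesgue differentiation theorem. The endpoint $p=1$ is degenerate in the Hölder chain above and would need a parallel argument based on the $L^1$-boundedness of the maximal function, but since the statement restricts to $p\in[1,\infty)$ with $r>1/p$ forcing $r=1$ unattainable for $p=1$ in $(0,1)$, this case is vacuous.
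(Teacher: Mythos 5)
First, a remark on what you are being compared to: the paper does not prove this statement at all --- it is quoted verbatim from \cite[Theorem 8.2]{Nezza} as a known embedding --- so your attempt can only be measured against the standard Campanato-type proof given there. Your overall strategy (reduce to $\mathbb{R}$ by extension, control the oscillation of $f$ against interval averages, apply H\"older with the Gagliardo weight) is the right one, but your central display contains a genuine gap. What the H\"older step actually yields is
\begin{equation*}
|f(x)-f_B|\;\le\;\frac1h\Bigl(\int_B\tfrac{|f(x)-f(z)|^p}{|x-z|^{1+rp}}\,dz\Bigr)^{1/p}\Bigl(\int_0^h t^{\frac{1+rp}{p-1}}\,dt\Bigr)^{\frac{p-1}{p}}\;=\;C_{p,r}\,h^{\,r}\,D(x)^{1/p},
\end{equation*}
because the weight integral equals $c\,h^{p(1+r)/(p-1)}$ and its $(p-1)/p$ power is $h^{1+r}$, not $h^{1+r-1/p}$ as you assert. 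Here $D(x)=\int_B|f(x)-f(z)|^p|x-z|^{-1-rp}\,dz$ is a single integral in $z$ at fixed $x$: it satisfies $\int D(x)\,dx\le [f]^p_{W^{r,p}}$, but it is \emph{not} bounded pointwise by $[f]^p_{W^{r,p}}$, so the claimed pointwise estimate $|f(x)-f_B|\le C h^{r-1/p}[f]_{W^{r,p}}$ does not follow from your computation. The missing factor $h^{-1/p}$ is produced exactly by averaging over $x\in B$ as well: $\frac1h\int_B|f(x)-f_B|\,dx\le Ch^{r}\bigl(\frac1h\int_B D(x)\,dx\bigr)^{1/p}\le Ch^{r-1/p}[f]_{W^{r,p}}$. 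In other words, your arithmetic error has silently absorbed the averaging step you skipped.

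The correct route, which is the one taken in \cite{Nezza}, is to establish this \emph{mean-oscillation} (Campanato) bound and then upgrade it to a pointwise H\"older estimate by telescoping over shrinking balls $B_{2^{-k}h}(x)$ and invoking Lebesgue points; this also supplies the continuous representative. You gesture at exactly this in your final paragraph (the uniform Cauchy property of $\{f_\rho(x)\}_{\rho}$), but you treat it as an afterthought when it is in fact the load-bearing step, and your main display cannot stand without it. Two smaller points: the convergence of the weight integral has nothing to do with $r>\frac1p$ (the exponent $\frac{1+rp}{p-1}$ is positive for every $r>0$); that hypothesis is what makes the resulting H\"older exponent $r-\frac1p$ positive. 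Your observation that the case $p=1$ is vacuous under $r\in(0,1)$ and $r>\frac1p$ is correct.
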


\begin{lem}\label{Lemma-DeBre} \cite[Lemma 2.11]{BrzezniakDebbi1} For each $\alpha>\frac32$ there exsits a constant $C_\alpha>0$ such that for all $t>0$ and for any bounded and strongly-measurable function $v: (0, t) \rightarrow L^1(0, 1)$ the following inequality holds 
\begin{equation}
\int_0^t| e^{-A^{\alpha/2} (t-s)}\frac{\partial v}{\partial x}(s)|_{L^2}ds\leq C_\alpha t^{1-\frac3{2\alpha}}\sup_{s\leq t}|v(s)|_{L^1}.
\end{equation}
\end{lem}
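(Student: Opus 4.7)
The plan is to use the explicit spectral representation of the semigroup on the Dirichlet eigenbasis $e_k(x) = \sqrt{2}\sin(k\pi x)$, $\lambda_k = (k\pi)^2$, and reduce the estimate to bounding a scalar series in $k$. First I would observe that, since each $e_k$ vanishes at the endpoints and $\partial_x e_k \in L^\infty$, for $v(s)\in L^1(0,1)$ integration by parts gives
\begin{equation*}
\langle \partial_x v(s), e_k \rangle = -\int_0^1 v(s,x)\,\partial_x e_k(x)\,dx,
\end{equation*}
so that $|\langle \partial_x v(s), e_k \rangle| \le \sqrt{2}\,k\pi\,|v(s)|_{L^1}$. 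Parseval's identity then yields
\begin{equation*}
|e^{-A^{\alpha/2}(t-s)}\partial_x v(s)|_{L^2}^{2} = \sum_{k\ge 1} e^{-2\lambda_k^{\alpha/2}(t-s)}|\langle \partial_x v(s), e_k\rangle|^{2} \le 2\pi^{2}\,|v(s)|_{L^1}^{2}\,\Sigma(t-s),
\end{equation*}
where $\Sigma(r) := \sum_{k\ge 1} k^{2} e^{-2(k\pi)^\alpha r}$.

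The main analytic step is to show $\Sigma(r) \le C_\alpha\,r^{-3/\alpha}$ for all $r>0$. The cleanest route is integral comparison: since $x\mapsto x^{2}e^{-2(\pi x)^\alpha r}$ is eventually decreasing, one has $\Sigma(r) \le C + \int_0^\infty x^{2} e^{-2(\pi x)^\alpha r}\,dx$, and the substitution $y = (2\pi^\alpha r)^{1/\alpha} x$ turns the integral into $(2\pi^\alpha r)^{-3/\alpha}\,\alpha^{-1}\Gamma(3/\alpha)$, which is finite. Alternatively one can invoke Lemma \ref{lem-elementary-1} with any $\gamma > 3/\alpha$ to get $\Sigma(r) \le C_{\alpha,\gamma}\,r^{-\gamma}$; the resulting $\epsilon$-loss in the exponent is harmless because what will actually bite is the threshold $\alpha>3/2$. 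Taking square roots gives
\begin{equation*}
|e^{-A^{\alpha/2}(t-s)}\partial_x v(s)|_{L^2} \le C_\alpha\,(t-s)^{-3/(2\alpha)}\,|v(s)|_{L^1}.
\end{equation*}

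Finally, integrating in $s\in(0,t)$ and bounding $|v(s)|_{L^1}$ by its supremum yields
\begin{equation*}
\int_0^t |e^{-A^{\alpha/2}(t-s)}\partial_x v(s)|_{L^2}\,ds \le C_\alpha\,\sup_{s\le t}|v(s)|_{L^1}\int_0^t (t-s)^{-3/(2\alpha)}\,ds = \frac{C_\alpha}{1-\frac{3}{2\alpha}}\,t^{1-\frac{3}{2\alpha}}\,\sup_{s\le t}|v(s)|_{L^1},
\end{equation*}
and the hypothesis $\alpha>3/2$ is precisely what guarantees $3/(2\alpha)<1$, so the time integral converges. The only subtle point, which I expect to require a short verification rather than serious work, is justifying the distributional pairing $\langle \partial_x v(s), e_k\rangle$ when $v(s)\in L^1(0,1)$ only; this is fine because $\partial_x e_k \in L^\infty$ and $e_k$ vanishes at the boundary, so no boundary contributions arise and the pairing is the honest integral written above.
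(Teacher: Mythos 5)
This lemma is imported by the paper verbatim from \cite[Lemma 2.11]{BrzezniakDebbi1}; no proof is given in the present paper, so there is nothing internal to compare against. Your argument is correct and is essentially the standard proof of this estimate: pair $\partial_x v(s)$ with the Dirichlet eigenbasis to get $|\langle \partial_x v(s),e_k\rangle|\le \sqrt{2}\,k\pi\,|v(s)|_{L^1}$, use Parseval to reduce everything to the scalar series $\Sigma(r)=\sum_k k^2e^{-2(k\pi)^\alpha r}\le C_\alpha r^{-3/\alpha}$, and observe that $\int_0^t(t-s)^{-3/(2\alpha)}ds$ converges precisely when $\alpha>\tfrac32$, producing the factor $t^{1-\frac{3}{2\alpha}}$. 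The one step that deserves a line of care is your integral comparison: the additive constant in ``$\Sigma(r)\le C+\int_0^\infty x^2e^{-2(\pi x)^\alpha r}\,dx$'' is really $\max_x x^2e^{-2(\pi x)^\alpha r}\sim r^{-2/\alpha}$, which is dominated by $r^{-3/\alpha}$ only for $r\le 1$; since the lemma is asserted with a constant uniform over all $t>0$, you should handle $r\ge 1$ separately by noting that $\Sigma(r)$ then decays exponentially, so the clean bound $\Sigma(r)\le C_\alpha r^{-3/\alpha}$ does hold for every $r>0$ (the variant via Lemma \ref{lem-elementary-1} with $\gamma>3/\alpha$ would, by contrast, leave a residual $t^{1-\gamma/2}$ that is not controlled by $t^{1-\frac{3}{2\alpha}}$ uniformly in $t$). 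Your justification of the distributional pairing for $v(s)\in L^1$ via $\partial_xe_k\in L^\infty$ is the right one and matches how the cited reference treats it.
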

\del{How to include this result in remark \ref{Rem-New} or in Lemma  \ref{lem-ess-general},
\begin{prop}\label{sob-embedding}
Let $ \Omega $ be a bounded $ C^{\infty} $-domain in $ \mathbb{R} $ (see for instance Definition 1 p. 73 from section 2.4.1 of \cite{Runst-Sickel}). Let $ s, \epsilon > 0 $, $ 0 < p ,q \leq \infty $ and suppose $ 0 < p_{0} < p_{1} \leq \infty $, $ 0 < q_{0} < q_{1} \leq \infty $. Then it holds
\begin{equation}
B^{s}_ {p , q_{0}}(\Omega) \hookrightarrow B^{s}_ {p , q_{1}}(\Omega), \; \; \; \; B^{s}_ {p_{1} , q}(\Omega) \hookrightarrow B^{s}_ {p_{0} , q}(\Omega) \; and \; \; \; B^{s + \epsilon}_ {p , \infty}(\Omega) \hookrightarrow B^{s}_ {p , q}(\Omega),
\end{equation}
where $ \hookrightarrow $ denotes continuous embedding.(You have already used this notation!!!)
\end{prop}

More precisely,  we work in the space $H:=L^2(0, 1)$. It is well known that $H:=L^2(0, 1)$ is a separable Hilbert space. Here we take the basis of the eigenvectors of the Laplacian endowed by the Dirichlet boundary conditions;  $e_k(\cdot):=\sin (2k\pi \cdot)$, hence $ \lambda_k:= (k\pi)^2$. We end this part of  the semi group properties  by the following trivial lemmas;}
\begin{lem}\label{lem-e-k-holder}
Let $ \delta\in (0, 1]$. There exists a constant $ c_\delta>0$ (independent of $k$) such that
\begin{equation}\label{est-k-holder}
|e_k|_{C^\delta} \leq c_\delta k^\delta.
\end{equation}
\end{lem}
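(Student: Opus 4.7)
The plan is to estimate separately the two pieces of the Hölder norm: the sup-norm $|e_k|_C$ and the Hölder seminorm $[e_k]_{C^\delta} := \sup_{x\neq y} |e_k(x)-e_k(y)|/|x-y|^\delta$, using only elementary trigonometric bounds. Since $e_k(x) = \sqrt{2}\sin(k\pi x)$, one trivially has $|e_k|_C \leq \sqrt{2}$, which is bounded uniformly in $k$ and therefore dominated by $k^\delta$ for every $k\geq 1$.

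The interesting piece is the seminorm. First I would observe the two elementary bounds
\begin{equation*}
|e_k(x)-e_k(y)| \leq 2\sqrt{2} \quad\text{and}\quad |e_k(x)-e_k(y)| \leq \sqrt{2}\, k\pi\, |x-y|,
\end{equation*}
the latter coming from the mean value theorem applied to $\sin$ together with $|e_k'(x)| = \sqrt{2}\,k\pi|\cos(k\pi x)| \leq \sqrt{2}\,k\pi$. Then I would interpolate between them: for any $\delta \in (0,1]$,
\begin{equation*}
|e_k(x)-e_k(y)| = |e_k(x)-e_k(y)|^{\delta}\,|e_k(x)-e_k(y)|^{1-\delta} \leq \bigl(\sqrt{2}\,k\pi|x-y|\bigr)^{\delta}\bigl(2\sqrt{2}\bigr)^{1-\delta}.
\end{equation*}
Dividing by $|x-y|^\delta$ and taking the supremum over $x\neq y$ yields $[e_k]_{C^\delta} \leq c'_\delta\, k^\delta$ with $c'_\delta := (\sqrt{2}\pi)^\delta (2\sqrt{2})^{1-\delta}$.

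Combining the two pieces gives $|e_k|_{C^\delta} \leq \sqrt{2} + c'_\delta k^\delta \leq c_\delta k^\delta$ for a constant $c_\delta$ depending only on $\delta$, which is the claimed estimate. There is no real obstacle here: the only point that would deserve a line of comment is the interpolation step, which is the standard device converting the ``trivial'' and ``Lipschitz'' bounds into a Hölder bound of the correct scaling $k^\delta$.
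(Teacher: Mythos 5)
Your proof is correct, and since the paper states this lemma without proof (it is treated as elementary), your argument supplies exactly the standard reasoning one would expect: the uniform bound on $|e_k|_{C}$ plus interpolation between the trivial bound and the Lipschitz bound $\sqrt{2}\,k\pi|x-y|$ to control the H\"older seminorm. Nothing is missing.
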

\begin{lem}\label{lem-elementary-1}
 $ \forall \gamma > 0, \exists \;\; C_{\gamma} > 0, s.t \; x^{ \gamma}e^{-x} \leq C_{ \gamma}$.
 \end{lem}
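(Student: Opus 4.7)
The plan is to reduce the claim to computing the maximum of the auxiliary function $g(x):=x^{\gamma}e^{-x}$ on $[0,\infty)$, then take $C_\gamma$ to be that maximum (or any explicit upper bound for it). This immediately yields the inequality $x^\gamma e^{-x}\leq C_\gamma$ for all $x\geq 0$.

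First I would verify that $g$ extends continuously to $[0,\infty)$ with $g(0)=0$, is non-negative, and satisfies $g(x)\to 0$ as $x\to\infty$. The only mildly non-trivial point is the decay at infinity for non-integer $\gamma$: choose any integer $n\geq \gamma$ and use the crude bound $e^{x}\geq x^{n+1}/(n+1)!$ to obtain $x^{\gamma}e^{-x}\leq (n+1)!\,x^{\gamma-n-1}\to 0$. Together with continuity, the extreme value theorem then guarantees that $g$ attains a finite maximum on $[0,\infty)$, which already suffices for the qualitative statement of the lemma.

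To pin down an explicit constant I would differentiate on $(0,\infty)$:
$$ g'(x)=x^{\gamma-1}e^{-x}(\gamma-x), $$
so $g'$ is positive on $(0,\gamma)$ and negative on $(\gamma,\infty)$. Thus $x=\gamma$ is the unique global maximiser and one may take
$$ C_{\gamma}:=\gamma^{\gamma}e^{-\gamma}, $$
a constant depending only on $\gamma$, as required.

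No step of this argument is expected to present any real obstacle; the lemma is a standard calculus fact, used throughout the paper merely to convert exponential decay $e^{-\lambda_k^{\alpha/2}t}$ into polynomial decay $(\lambda_k^{\alpha/2}t)^{-\gamma}$ in the semigroup estimates.
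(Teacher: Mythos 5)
Your proof is correct and complete; the optimization argument giving $C_\gamma=\gamma^\gamma e^{-\gamma}$ is the standard one, and the paper itself states this elementary lemma without proof, so there is nothing to compare against beyond noting that your argument is exactly what is implicitly intended.
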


\begin{lem}\label{lem-elementary-2}
 $ \forall \eta \in (0,1), \exists \;\;  C_{\eta} > 0, s.t \; x^{-\eta}(1-e^{-x}) \leq C_{\eta}$.
 \end{lem}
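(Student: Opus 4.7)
The plan is to show that $g(x) := x^{-\eta}(1-e^{-x})$ is bounded on $(0, \infty)$ by controlling its behavior separately near $0$ and near $\infty$, where the two factors $x^{-\eta}$ and $(1-e^{-x})$ compete. Qualitatively, as $x \to 0^+$ the factor $1-e^{-x}$ vanishes like $x$, so the factor $x^{-\eta}$ (with $\eta < 1$) is dominated; as $x \to \infty$ the factor $1-e^{-x}$ is bounded while $x^{-\eta}$ (with $\eta > 0$) tends to $0$. Since $g$ is continuous on $(0,\infty)$ and extends continuously by $0$ to both endpoints, the existence of $C_\eta$ is immediate. To make the constant explicit I would split the axis at $x = 1$.

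On $(0,1]$ I would use the elementary inequality $1 - e^{-x} \leq x$, which follows from the fact that $\varphi(x) := x - 1 + e^{-x}$ satisfies $\varphi(0) = 0$ and $\varphi'(x) = 1 - e^{-x} \geq 0$. This gives
\begin{equation*}
g(x) \;\leq\; x^{-\eta}\cdot x \;=\; x^{1-\eta} \;\leq\; 1,
\end{equation*}
where the last inequality uses $1-\eta > 0$ and $x \leq 1$. On $(1,\infty)$ I would use the trivial bound $1 - e^{-x} \leq 1$, so
\begin{equation*}
g(x) \;\leq\; x^{-\eta} \;\leq\; 1,
\end{equation*}
since $\eta > 0$ and $x > 1$. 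Combining the two regimes yields the claim with $C_\eta = 1$.

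No step here poses a genuine obstacle; the argument uses only the monotonicity of the exponential and the tangent-line bound $1-e^{-x}\le x$. A sharper constant could be extracted by maximizing $g$ on $(0,\infty)$ via $g'(x) = 0$, but this refinement is not needed for the applications of the lemma elsewhere in the paper.
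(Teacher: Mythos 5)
Your proof is correct: the splitting at $x=1$ together with the bounds $1-e^{-x}\le x$ on $(0,1]$ and $1-e^{-x}\le 1$ on $(1,\infty)$ gives the uniform constant $C_\eta=1$, and every step is justified. The paper states this lemma without proof, treating it as elementary, so there is nothing to compare against; your argument is the standard one and fills the gap adequately.
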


\begin{lem}\label{appendix-A-1}\cite[Lemma 9]{Jentzen-thesis}.
Let $ \eta \in (0,1) $, then $\;\; \int_{0}^{1}\int_{0}^{1} \vert x-y \vert^{-\eta} dx dy \leq \frac{3}{1-\eta} $.
\end{lem}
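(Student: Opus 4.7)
The plan is to compute the double integral directly by splitting the inner integral at the singularity $x=y$ and reducing everything to one-dimensional Beta-type integrals, then crudely bounding the resulting explicit constant by $3/(1-\eta)$.

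First I would fix $y\in(0,1)$ and split the inner integral as
\begin{equation*}
\int_{0}^{1}|x-y|^{-\eta}\,dx = \int_{0}^{y}(y-x)^{-\eta}\,dx + \int_{y}^{1}(x-y)^{-\eta}\,dx.
\end{equation*}
The substitutions $u=y-x$ in the first piece and $u=x-y$ in the second piece are immediate and, since $\eta\in(0,1)$ keeps both integrals convergent, they evaluate to $y^{1-\eta}/(1-\eta)$ and $(1-y)^{1-\eta}/(1-\eta)$ respectively. This gives the clean identity
\begin{equation*}
\int_{0}^{1}|x-y|^{-\eta}\,dx=\frac{y^{1-\eta}+(1-y)^{1-\eta}}{1-\eta}.
\end{equation*}

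Next I would integrate over $y\in(0,1)$. By the same antiderivative argument (or by symmetry $y\mapsto 1-y$), one gets
\begin{equation*}
\int_{0}^{1}\int_{0}^{1}|x-y|^{-\eta}\,dx\,dy=\frac{2}{(1-\eta)(2-\eta)}.
\end{equation*}
Finally, since $\eta\in(0,1)$ we have $2-\eta>1$, hence $\frac{2}{2-\eta}\leq 2\leq 3$, so that
\begin{equation*}
\int_{0}^{1}\int_{0}^{1}|x-y|^{-\eta}\,dx\,dy \leq \frac{3}{1-\eta},
\end{equation*}
which is the stated bound. There is no real obstacle here: the only point to watch is the integrability at the diagonal, which is guaranteed by $\eta<1$, and the final numerical constant $3$ is comfortably loose, so no optimisation is needed.
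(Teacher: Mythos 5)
Your computation is correct: the inner integral evaluates to $\bigl(y^{1-\eta}+(1-y)^{1-\eta}\bigr)/(1-\eta)$, the outer integration gives the exact value $\tfrac{2}{(1-\eta)(2-\eta)}$, and since $2-\eta>1$ this is bounded by $\tfrac{2}{1-\eta}\leq\tfrac{3}{1-\eta}$. The paper itself gives no proof, citing the result to Jentzen's thesis, and your direct evaluation is the standard argument one would expect there; in fact you prove slightly more than is claimed, since you obtain the exact value of the integral rather than just the upper bound.
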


\begin{lem}\label{appendix-A-4}\cite[Lemma 2.1]{Kloeden1}. Let $ \tau > 0$,  $ (C_p)_{p \geq 1}  \subset [0,\infty )$ and let $ (Z_n)_{n \in \mathbb{N}} $ be a sequence of random variables such that 
\begin{equation}
 (\mathbb{E}\vert Z_n \vert^{p} )^{\frac{1}{p}} \leq C_p \; n^{-\tau} ,
\end{equation}
for all $ p \geq 1 $ and all $ n \in \mathbb{N} $. Then
\begin{equation}
\mathbb{P} \left( \sup_{n \in \mathbb{N}} ( n^{\tau -\epsilon} \vert Z_n \vert ) < \infty  \right) = 1,
\end{equation}
for all $ \epsilon \in (0,\tau) $. 
\end{lem}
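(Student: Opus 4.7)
\textbf{Proof plan for Lemma \ref{appendix-A-4}.} The statement is a standard consequence of a Markov--Borel--Cantelli argument applied at a well-chosen exponent $p$ depending on $\epsilon$, so the plan is entirely classical; the only point to watch is to make sure the rate we lose is strictly less than $\tau$. Fix $\epsilon \in (0,\tau)$. The idea is to show that the event $\{|Z_n| > n^{-(\tau - \epsilon/2)}\}$ occurs for only finitely many $n$, almost surely, which immediately yields the claim since on its complement $n^{\tau - \epsilon}|Z_n| \leq n^{-\epsilon/2}\to 0$.

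The first step is to apply Markov's inequality at an exponent $p\geq 1$ that will be fixed later. For every $n \in \mathbb{N}$ and $p\geq 1$, the moment hypothesis together with Markov gives
\begin{equation}
\mathbb{P}\bigl(|Z_n| > n^{-(\tau - \epsilon/2)}\bigr) \leq n^{p(\tau - \epsilon/2)}\,\mathbb{E}|Z_n|^p \leq C_p^{\,p}\, n^{p(\tau - \epsilon/2)}\, n^{-p\tau} = C_p^{\,p}\, n^{-p\epsilon/2}.
\end{equation}
The second step is to choose $p$ so that this bound is summable. Picking any $p \geq 1$ with $p\epsilon/2 > 1$, e.g. $p := \lceil 3/\epsilon\rceil$, we obtain
\begin{equation}
\sum_{n=1}^{\infty} \mathbb{P}\bigl(|Z_n| > n^{-(\tau - \epsilon/2)}\bigr) \leq C_p^{\,p} \sum_{n=1}^{\infty} n^{-p\epsilon/2} < \infty.
\end{equation}

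The third step invokes the Borel--Cantelli lemma, which yields $\mathbb{P}\bigl(|Z_n| > n^{-(\tau - \epsilon/2)} \text{ i.o.}\bigr) = 0$. Hence, for almost every $\omega$, there exists $N(\omega) \in \mathbb{N}$ such that $|Z_n(\omega)| \leq n^{-(\tau - \epsilon/2)}$ for every $n \geq N(\omega)$. Multiplying by $n^{\tau - \epsilon}$ and using $\epsilon/2 > 0$ gives $n^{\tau - \epsilon}|Z_n(\omega)| \leq n^{-\epsilon/2} \leq 1$ for $n\geq N(\omega)$, so
\begin{equation}
\sup_{n \in \mathbb{N}} n^{\tau - \epsilon}|Z_n(\omega)| \leq \max\Bigl\{1,\ \max_{1\leq n < N(\omega)} n^{\tau - \epsilon}|Z_n(\omega)|\Bigr\} < \infty,
\end{equation}
which proves the claim. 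There is no serious obstacle: the moment bound is uniform in $p$ (with $C_p$ depending only on $p$, not on $n$), which is exactly what Markov+Borel--Cantelli needs, and the cushion $\epsilon/2$ between the a.s. rate $n^{\tau - \epsilon}$ and the in-probability rate $n^{\tau - \epsilon/2}$ is what allows the Borel--Cantelli series to converge for $p$ large enough.
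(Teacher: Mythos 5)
Your argument is correct: the paper does not prove this lemma itself but imports it verbatim from \cite{Kloeden1}, and the proof given there is exactly your Markov--Borel--Cantelli argument at a fixed exponent $p$ with $p\epsilon/2>1$. Nothing is missing — the a.s.\ finiteness of the initial block $\max_{1\leq n<N(\omega)}n^{\tau-\epsilon}|Z_n(\omega)|$ follows from $\mathbb{E}|Z_n|^p<\infty$, so the supremum is indeed a.s.\ finite.
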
 

\begin{theorem}\label{append-lem-gauss}\cite[Lemma 10]{Jentzen-thesis}. 
Let $ Y: \Omega \rightarrow \mathbb{R}$ be a $ \mathcal{F}/\mathcal{B}(\mathbb{R})-$measurable mapping that is centered and normal distributed. Then for every $p\in \mathbb{N}$, 
\begin{equation}
\mathbb{E}|Y|^p\leq p!(\mathbb{E}|Y|^2)^{\frac p2}.
\end{equation}
\end{theorem}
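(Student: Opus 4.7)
The plan is to reduce to the standard normal case by rescaling and then to treat even and odd values of $p$ separately. If $\sigma^{2}:=\mathbb{E}|Y|^{2}=0$, then $Y=0$ almost surely and the inequality is trivial; otherwise I would set $Z:=Y/\sigma\sim N(0,1)$, so that the claim rewrites as $\mathbb{E}|Z|^{p}\leq p!$ for every $p\in\mathbb{N}$. Multiplying through by $\sigma^{p}=(\mathbb{E}|Y|^{2})^{p/2}$ at the very end will then yield the asserted bound.

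For even $p=2k$, the idea is to read off the moments of $Z$ from the moment generating function identity $\mathbb{E}\,e^{tZ}=e^{t^{2}/2}$ by matching coefficients of the two Taylor series in $t$. This immediately gives the classical formula
\begin{equation*}
\mathbb{E}|Z|^{2k}=\mathbb{E}Z^{2k}=\frac{(2k)!}{2^{k}k!},
\end{equation*}
which is trivially bounded by $(2k)!$ since $2^{k}k!\geq 1$.

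For odd $p=2k+1$, I would interpolate between the two adjacent even moments via Cauchy--Schwarz, writing
\begin{equation*}
\mathbb{E}|Z|^{2k+1}\leq \bigl(\mathbb{E}|Z|^{2k}\bigr)^{1/2}\bigl(\mathbb{E}|Z|^{2k+2}\bigr)^{1/2}.
\end{equation*}
Substituting the closed form from the even case reduces the inequality $\mathbb{E}|Z|^{2k+1}\leq(2k+1)!$ to an elementary estimate on factorials, easily verified for every $k\geq 0$; the base case $k=0$ recovers the sharp bound $\mathbb{E}|Z|\leq 1=1!$.

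I do not foresee a genuine obstacle: the argument is entirely routine once one realises that Cauchy--Schwarz bridges odd moments to even ones and that the even moments of a standard Gaussian are explicitly known from its mgf. The only mild care needed will be in tracking the numerical constants in the odd-moment step so that precisely $p!$ appears on the right-hand side, rather than a larger factorial-type expression.
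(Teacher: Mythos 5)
Your argument is correct: the reduction to $Z=Y/\sigma$, the even moments $\mathbb{E}Z^{2k}=(2k)!/(2^{k}k!)\leq(2k)!$ read off from the moment generating function, and the Cauchy--Schwarz interpolation for odd $p$ (where the required factorial inequality $\frac{(2k)!\,(2k+2)!}{2^{2k+1}k!\,(k+1)!}\leq((2k+1)!)^{2}$ does check out, with equality of the bounds at $k=0$) together give $\mathbb{E}|Z|^{p}\leq p!$ for all $p$. Note, however, that the paper offers no proof of this statement at all --- it is quoted verbatim from \cite[Lemma 10]{Jentzen-thesis} --- so there is nothing internal to compare against; your self-contained derivation is a perfectly standard and valid way to establish it.
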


\begin{lem}\label{appendix-A-2}\cite[Lemma 12.]{Jentzen-thesis}
Let $ W: [0,T]\times \Omega \rightarrow \mathbb{R} $ be a standard Brownian motion. Then for every $ r \in [0,1]$, $ \lambda \in (0,\infty)$ and $ t_{1},t_{2} \in [0,T] $,

\begin{equation}
\mathbb{E}\left( \vert \int_{0}^{t_{2}} e^{-\lambda(t_{2}-s)} dW(s) - \int_{0}^{t_{1}} e^{-\lambda(t_{1}-s)} dW(s) \vert^{2} \right) \leq \lambda^{r-1} \vert t_{2}-t_{1} \vert^{r} , 
\end{equation}
\end{lem}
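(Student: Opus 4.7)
Without loss of generality I assume $t_1 \leq t_2$ and set $h := t_2 - t_1$. The first step is to split the integrand according to the time intervals $[0,t_1]$ and $[t_1,t_2]$:
\begin{equation*}
\int_{0}^{t_{2}} e^{-\lambda(t_{2}-s)} dW(s) - \int_{0}^{t_{1}} e^{-\lambda(t_{1}-s)} dW(s)
= \int_0^{t_1}\bigl(e^{-\lambda(t_2-s)}-e^{-\lambda(t_1-s)}\bigr)dW(s) + \int_{t_1}^{t_2} e^{-\lambda(t_2-s)}dW(s).
\end{equation*}
Since the two stochastic integrals are over disjoint time intervals, they are independent, so by the It\^o isometry the second moment of the sum equals the sum of the second moments:
\begin{equation*}
I := \int_0^{t_1}\bigl(e^{-\lambda(t_2-s)}-e^{-\lambda(t_1-s)}\bigr)^2 ds + \int_{t_1}^{t_2} e^{-2\lambda(t_2-s)} ds.
\end{equation*}

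The second step is to evaluate these deterministic integrals. Factoring $e^{-\lambda(t_2-s)}-e^{-\lambda(t_1-s)} = e^{-\lambda(t_1-s)}(e^{-\lambda h}-1)$ and changing variables gives
\begin{equation*}
I = (1-e^{-\lambda h})^2\,\frac{1-e^{-2\lambda t_1}}{2\lambda} + \frac{1-e^{-2\lambda h}}{2\lambda}.
\end{equation*}
Using $1-e^{-2\lambda t_1}\leq 1$ in the first term and the factorisation $1-e^{-2\lambda h}=(1-e^{-\lambda h})(1+e^{-\lambda h})$ in the second, the numerator of $2\lambda I$ is bounded by
\begin{equation*}
(1-e^{-\lambda h})^2 + (1-e^{-\lambda h})(1+e^{-\lambda h}) = (1-e^{-\lambda h})\bigl[(1-e^{-\lambda h})+(1+e^{-\lambda h})\bigr] = 2(1-e^{-\lambda h}),
\end{equation*}
so that $I \leq (1-e^{-\lambda h})/\lambda$. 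This clean cancellation is the small obstacle of the proof; without it one picks up a factor $3/2$ that would ruin the stated bound.

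Finally, I interpolate between the trivial estimates $1-e^{-x}\leq 1$ and $1-e^{-x}\leq x$, which give $1-e^{-x}\leq \min(1,x)\leq x^r$ for every $r\in[0,1]$ (both cases $x\leq 1$ and $x\geq 1$ are immediate from monotonicity of $x\mapsto x^r$). Applying this with $x=\lambda h$ yields
\begin{equation*}
I \leq \frac{(\lambda h)^r}{\lambda} = \lambda^{r-1}\,|t_2-t_1|^r,
\end{equation*}
which is the claim.
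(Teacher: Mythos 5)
Your proof is correct. Note that the paper itself gives no proof of this lemma -- it is quoted verbatim from [Lemma 12] of Jentzen's thesis -- so there is nothing to compare against in the source; your argument (split the difference over $[0,t_1]$ and $[t_1,t_2]$, apply the It\^o isometry, exploit the cancellation $(1-e^{-\lambda h})^2+(1-e^{-2\lambda h})=2(1-e^{-\lambda h})$, then interpolate $1-e^{-x}\leq\min(1,x)\leq x^r$) is exactly the standard one behind the cited result, and your observation that the cancellation is what avoids a spurious factor $3/2$ is accurate.
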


\begin{lem}\label{Gronwall-Lemma} -Discrete Gronwall Lemma \cite{Holte}-.
Let $ (x_{n})_{n \in \mathbb{N}} $ and $(y_{n})_{n \in \mathbb{N}} $ be positive sequences and $ C $ a positive constant. If for any $ n \geq 0 $
$$ x_{n} \leq C + \sum_{k=0}^{n-1} x_{k} y_{k}, $$ 
then
$$ x_{n} \leq C exp(\sum_{k=0}^{n-1} y_{k}). $$
\end{lem}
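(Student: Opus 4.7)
The plan is to proceed by induction on $n$, exploiting the fact that the right-hand side of the hypothesis is itself a recursive quantity that satisfies a clean multiplicative recurrence.

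First I would introduce the auxiliary sequence $a_n := C + \sum_{k=0}^{n-1} x_k y_k$, so that by hypothesis $x_n \leq a_n$ for every $n \geq 0$, with the convention that an empty sum equals zero (so $a_0 = C$). The key observation is that $a_{n+1} - a_n = x_n y_n$; applying the hypothesis $x_n \leq a_n$ together with the positivity of $y_n$ yields the one-step inequality
\begin{equation*}
a_{n+1} \leq a_n \bigl(1 + y_n\bigr),\qquad n \geq 0.
\end{equation*}

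Iterating this bound starting from $a_0 = C$, I obtain by a straightforward finite induction the product estimate
\begin{equation*}
a_n \leq C \prod_{k=0}^{n-1}\bigl(1 + y_k\bigr).
\end{equation*}
To pass from the product to the exponential in the statement, I would invoke the elementary inequality $1+y \leq e^{y}$, valid for $y \geq 0$, which immediately turns the product into $\exp\bigl(\sum_{k=0}^{n-1} y_k\bigr)$. Combining with $x_n \leq a_n$ gives the desired conclusion.

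There is essentially no obstacle here: the argument is the standard discrete analogue of the proof of Gronwall's inequality in continuous time, with the differential inequality replaced by a forward difference inequality and the integrating factor $\exp(\int y)$ replaced by the product $\prod(1+y_k)$. The only tiny care point is the index bookkeeping (the empty sum at $n=0$ and the shift between $a_n$ and $a_{n+1}$), which must be handled consistently so that the final exponential sum runs from $k=0$ to $k=n-1$ as claimed.
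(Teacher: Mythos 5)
Your proof is correct: the auxiliary sequence $a_n := C + \sum_{k=0}^{n-1} x_k y_k$, the one-step bound $a_{n+1} \leq a_n(1+y_n)$, and the inequality $1+y \leq e^{y}$ together give exactly the claimed estimate, and the index bookkeeping is handled consistently. The paper itself does not prove this lemma (it only cites the reference \cite{Holte}), and your argument is the standard one for the discrete Gronwall inequality, so there is nothing to compare against or correct.
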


\end{document}